\newtheorem{prop}{Proposition}[section]
\newtheorem{lemma}[prop]{Lemma}
\newtheorem{theorem}[prop]{Theorem}
\newtheorem{note}[prop]{Note}
\numberwithin{equation}{section}
\newcommand{\RR}{\mathbb{R}}
\newcommand{\R}{\mathbb R}
\newcommand{\Z}{\mathbb Z}
\newcommand{\C}{\mathbb C}
\newcommand{\N}{\mathbb N}
\newcommand{\T}{\mathbb T}
\newcommand{\B}{\mathbb B}
\newcommand{\w}{\omega}
\newcommand{\e}{\varepsilon}
\newcommand{\g}{\gamma}
\newcommand{\p}{\varphi}
\newcommand{\s}{\psi}
\renewcommand{\a}{\alpha}
\newcommand{\mc}{\mathcal}
\newcommand{\mb}{\mathbb}
\def\set4{\mathcal I}
\def\tup14{(1,2,3,4)}
\def\eps{\varepsilon}
\newcommand\vwidehat[1]{\arraycolsep=0pt\relax%
\begin{array}{c}
\stretchto{
  \scaleto{
    \scalerel*[\widthof{\ensuremath{#1}}]{\kern-.5pt\bigwedge\kern-.5pt}
    {\rule[-\textheight/2]{1ex}{\textheight}} 
  }{\textheight} %
}{0.5ex}\\           
#1\\                 
\rule{-1ex}{0ex}
\end{array}
}
\newtheorem*{comm*}{Comment}
\newtheorem{definition}[prop]{Definition}
\newtheorem{notation}[prop]{Notation}
\newtheorem{corollary}[prop]{Corollary}
\newtheorem{thm*}{Theorem}[section]
\newcommand\widecheck[1]{%
\savestack{\tmpbox}{\stretchto{%
  \scaleto{%
    \scalerel*[\widthof{\ensuremath{#1}}]{\kern-.6pt\bigwedge\kern-.6pt}%
    {\rule[-\textheight/2]{1ex}{\textheight}}
  }{\textheight}%
}{0.5ex}}%
\stackon[1pt]{#1}{\scalebox{-1}{\tmpbox}}%
}
\begin{document}

\title{Improved decoupling for the parabola}
\author{Larry Guth, Dominique Maldague, and Hong Wang}
\address{Department of Mathematics\\
Massachusetts Institute of Technology\\
Cambridge, MA 02142-4307, USA}
\email{dmal@mit.edu}

\date{\today}

\maketitle

\begin{abstract}
	We prove an $(\ell^2,L^6)$ decoupling inequality for the parabola with constant $(\log R)^{c}$. In the appendix, we present an application to the  sixth-order correlation of the integer solutions to $x^2+y^2=m$.
\end{abstract}

\tableofcontents

\section{Introduction and main results}

Let $f:\R^n\to\C$ be in the Schwartz class $\mathcal{S}$ with  Fourier support contained in $\mc{N}_{R^{-1}}(\mb{P}^{n-1})$, the $R^{-1}$ neighborhood of $\mathbb{P}^{n-1}:=\{(\xi, |\xi|^2), |\xi|\leq 1, \xi\in \mathbb{R}^{n-1}\}$. Let $\{\theta\}$ be a tiling of $\mc{N}_{R^{-1}}(\mb{P}^{n-1})$ by approximately  $R^{-1/2}\times \cdots \times R^{-1/2}\times R^{-1}$ rectangular boxes $\theta$ and define $f_{\theta}=(\widehat{f}\chi_{\theta})^{\vee}$. 

Let $D_{n,p}(R)$ denote the smallest constant such that 
\begin{equation}
\|f\|_{L^p(\mathbb{R}^n)} \leq D_{n, p}(R) (\sum_{\theta} \|f_{\theta}\|_{L^p(\mathbb{R}^n)}^2)^{1/2},
\end{equation}
for any $f\in \mathcal{S}$ with $\text{supp}\widehat{f}\subset \mathcal{N}_{R^{-1}}(\mathbb{P}^{n-1})$.

A trivial estimate using Cauchy-Schwarz and the triangle inequality yields $D_{n, p}(R) \leq R^{(n-1)/2}$.  And  we have $D_{n,p}(R)\geq1$ by taking $f=f_{\theta}$. Bourgain and Demeter \cite{BD14} proved that for $2\leq p\leq \frac{2(n+1)}{n-1}$, $D_{n, p}(R)\leq C_{\epsilon} R^{\epsilon}$ for any small $\epsilon>0$. Such estimates are possible due to the curvature of $\mathbb{P}^{n-1}$ and are sharp up to $R^{\epsilon}$--loss. The estimates have many applications in harmonic analysis, PDE and number theory.  It was conjectured that $D_{n,p}\leq C_p$ for $1\leq p< \frac{2(n+1)}{n-1}$. 

In this paper, we focus on the case $n=2$ and write $D_p(R)=D_{n,p}(R)$.  At the end point $p=6$, Bourgain proved in \cite{Bourgain93}  that $D_6(R)\gtrsim (\log R)^{1/6}$. Based on the Bourgain-Demeter decoupling, Zane Li  \cite{Zthesis}  proved that $D_6(R)\lesssim \exp( O(\frac{\log R \log \log \log R}{\log \log R}))$. Then, by adapting ideas from efficient congruencing, he proved \cite{ZLiEff} that  $D_6(R)\lesssim \exp( O(\frac{\log R}{\log \log R}))$.  This was the best previous bound for $D_6(R)$.  In this paper, we prove

\begin{theorem} \label{decoupling}
	$D_6(R)\lesssim (\log R)^{c'}$ for an absolute constant $c'$. 
\end{theorem}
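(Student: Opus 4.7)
My approach is induction on $R$, aiming to prove a recursive inequality of the form $D_6(R)^2 \lesssim D_6(R_*)^2 \cdot (\log R)^{O(1)}$ for some $R_* \le R/2$, which iterates to the polylogarithmic bound. Set $g := (\sum_\theta |f_\theta|^2)^{1/2}$, so the claim is equivalent to $\||f|^2\|_{L^3} \lesssim (\log R)^{2c'}\|g^2\|_{L^3}$. The central observation is a Fourier-localization: since $\widehat{|f_\theta|^2} = \widehat{f_\theta} * \overline{\widehat{f_\theta}(-\cdot)}$ is supported in $\theta-\theta$, an $R^{-1/2}\times R^{-1}$ box at the origin, $g^2 = \sum_\theta|f_\theta|^2$ is concentrated near frequency $0$, whereas each cross term $f_{\theta_1}\overline{f_{\theta_2}}$ lives near frequency $\theta_1-\theta_2$. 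Decomposing dyadically in $|\theta_1-\theta_2|$,
\[
|f|^2 \;=\; g^2 + \sum_{s=1}^{S} E_s, \qquad E_s := \sum_{|\theta_1-\theta_2|\sim 2^{-s}} f_{\theta_1}\overline{f_{\theta_2}},\quad S \sim \tfrac12\log_2 R,
\]
each $E_s$ is supported in a thin dyadic frequency annulus $\{|\xi_1|\sim 2^{-s}\}$.

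The plan then runs as follows. (i) Wave-packet decompose each $f_\theta = \sum_T f_{\theta,T}$ into $R^{1/2}\times R$ tubes dual to $\theta$, and pigeonhole dyadically over wave-packet amplitudes and tube multiplicities; careful dyadic pigeonholing costs only $(\log R)^{O(1)}$. (ii) At each scale $s$, group the $\theta$'s into super-caps $\tau$ of size $2^{-s}$ and observe that the pairs contributing to $E_s$ are those with $\theta_1,\theta_2$ in a common $\tau$; affine rescaling $\tau$ to a unit cap realises this as an instance of the decoupling problem at the smaller scale $R_s = 4^s$. (iii) Apply the induction hypothesis at scale $R_s$ to bound $\|E_s\|_{L^3}$ in terms of the coarser square function $\sum_\tau |f_\tau|^2$, and bootstrap back to $\|g^2\|_{L^3}$ using the identity $|f_\tau|^2 = \sum_{\theta\subset\tau}|f_\theta|^2 + (\text{cross-terms within }\tau)$. (iv) Sum across $s=1,\ldots,S$ using an $L^3$ Littlewood--Paley-type inequality adapted to the disjoint frequency annuli of the $E_s$, so that the summation introduces only a $(\log R)^{O(1)}$ factor rather than an exponential one.

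The chief obstacle is step (iv): a naive triangle inequality in $L^3$ would produce a $(\log R)^{S}$-type loss across the $S\sim\log R$ scales, which is catastrophically large. One must instead exploit the disjoint frequency supports essentially orthogonally, via a vector-valued Littlewood--Paley inequality on $L^3$ (yielding $\|\sum_s E_s\|_3 \lesssim (\sum_s\|E_s\|_3^2)^{1/2}$) or an interpolation trading $L^2$-Plancherel against the target $L^6$ bound itself. A secondary but pervasive difficulty is keeping every pigeonholing step in (i) and (iii) to a loss of $(\log R)^{O(1)}$ rather than $R^{\epsilon}$; this requires reducing to \emph{uniform} wave-packet configurations in which the key local quantities can be compared cap by cap --- in the spirit of the broad/narrow dichotomy of Bourgain--Guth, but made quantitative enough to preserve polylogarithmic bounds at every stage of the induction.
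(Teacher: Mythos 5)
Your proposal takes a fundamentally different route from the paper, and there are several genuine gaps.

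First, the claimed equivalence between $D_6(R)\lesssim(\log R)^{c'}$ and the reverse square function bound $\||f|^2\|_{L^3}\lesssim(\log R)^{2c'}\|\sum_\theta|f_\theta|^2\|_{L^3}$ is false. That square function estimate \emph{implies} decoupling (by the triangle inequality in $L^3$, $\|\sum_\theta|f_\theta|^2\|_{L^3}\le\sum_\theta\|f_\theta\|_{L^6}^2$), but the converse fails, so you have replaced the goal by a strictly stronger one. The paper deliberately avoids this trap: the right-hand side of Theorem~\ref{Main} is $(\sum_\theta\|f_\theta\|_{L^\infty}^2)^2\sum_\theta\|f_\theta\|_{L^2}^2$, which dominates $\int(\sum_\theta|f_\theta|^2)^3$ and is exactly what becomes accessible after pigeonholing to comparable wave-packet amplitudes.

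Second, and more seriously, the recursion $D_6(R)^2\lesssim D_6(R_*)^2(\log R)^{O(1)}$ with $R_*\le R/2$ does \emph{not} iterate to a polylogarithmic bound: if $R_*$ only halves, the recursion closes after $\sim\log_2 R$ steps and yields $D_6(R)\lesssim(\log R)^{O(\log R)}$, which is worse than any fixed power of $R$. You would need either an additive recursion, or $R_*$ to drop to $O(1)$ after $O(1)$ steps, and neither is exhibited. Relatedly, rescaling a $2^{-s}$-cap $\tau$ to unit size puts you at scale $R/4^s$ (not $4^s$), which for small $s$ is barely smaller than $R$. Finally, step~(iv) needs a bound on each $\|E_s\|_{L^3}$ in terms of $\|\sum_\theta|f_\theta|^2\|_{L^3}$ before the Littlewood--Paley/Minkowski step $\|\sum_s E_s\|_{L^3}\lesssim(\sum_s\|E_s\|_{L^3}^2)^{1/2}$ is of any use, and no mechanism is given; worse, the ``bootstrap'' in step~(iii) re-expands $|f_\tau|^2$ into $\sum_{\theta\subset\tau}|f_\theta|^2$ plus exactly the cross terms you set out to estimate, which is circular.

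For comparison, the paper's argument is not an induction on $R$ and does not attempt the square function estimate. It builds square functions $g_k=\sum_{\tau_k}|f_{\tau_k}|^2$ at every intermediate scale $R_k=(\log R)^{12k}$, proves a pointwise low lemma $|g_{k,\ell}|\le g_{k+1}$ and an almost-orthogonal high lemma, and partitions $Q_R$ into sets $\Omega_k$ where $g_k$ is high-dominated. On $\Omega_k$ it applies bilinear restriction (not Littlewood--Paley) to pass from $L^6$ to an $L^4$ estimate of $g_k^h$, then uses an iterated wave-packet pruning to control $\|f_{k,\tau_k}\|_\infty$ by $\lambda\approx r/\alpha$. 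The polylog constant comes from driving the low-lemma constant down to $1+O(1/\log R)$, which in turn requires the Gaussian partition of unity, the well-spaced reduction, and the convolution with $\varphi_{\tilde T_{\tau_k}}$ in the very definition of $g_k$. None of these ingredients appears in your sketch, and without such careful bookkeeping a polylogarithmic constant cannot survive $\sim\log R/\log\log R$ iterations.
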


Theorem~\ref{decoupling} is a corollary of our main theorem, which estimates the $L^6$--norm of $f$ on a subset of $\mathbb{R}^2$. 


%
%

\begin{theorem}\label{Main}  There exists $c>0$ such that the following holds. If $f\in\mc{S}$ has Fourier support contained in $\mc{N}_{R^{-1}}(\mb{P}^1)$ and $Q_R\subset\R^2$ is any cube of sidelength $R$, then
\[ \|f\|_{L^6(Q_R)}^6\le (\log R)^c (\sum_\theta\|f_\theta\|_{L^\infty(\mathbb{R}^2)}^2)^2\sum_\theta\|f_\theta\|_{L^2(\mathbb{R}^2)}^2\qquad\qquad\forall R\ge 2. \]
 \end{theorem}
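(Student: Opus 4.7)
The plan is to apply the high--low frequency method that has driven recent improvements of decoupling estimates. Define the square function $g(x) := \sum_\theta |f_\theta(x)|^2$ and the off-diagonal part $h := \sum_{\theta_1 \neq \theta_2} f_{\theta_1}\overline{f_{\theta_2}}$, so that $|f|^2 = g + h$. Since each $|f_\theta|^2$ has Fourier support in $\theta - \theta \subset B(0, O(R^{-1/2}))$, the function $g$ is essentially a low-frequency object, while $h$ lives at frequencies $\gtrsim R^{-1/2}$. Expanding
\[ |f|^6 = (g+h)^3 = g^3 + 3g^2 h + 3gh^2 + h^3, \]
the strategy is to absorb the first three terms directly into the desired right-hand side, and to handle $\int h^3$ by iteration on scale.

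For the three $g$-dominated terms, the pointwise bound $\|g\|_\infty \le \sum_\theta \|f_\theta\|_\infty^2$ is immediate from the definition of $g$, and Plancherel gives $\int g = \|f\|_2^2 = \sum_\theta \|f_\theta\|_2^2$ (the last equality using disjointness of the $\theta$'s in Fourier). The classical $L^4$-orthogonality for the parabola (a consequence of the fact that $\theta_1 + \theta_2 = \theta_3 + \theta_4$ forces $\{\theta_1,\theta_2\} = \{\theta_3,\theta_4\}$ up to thickness) yields $\int |f|^4 \lesssim \int g^2$, from which $\int h^2 \lesssim \int g^2 \lesssim \|g\|_\infty \|f\|_2^2$. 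H\"older applied to each of $\int g^3$, $\int g^2 h$, $\int g h^2$ combined with these estimates produces
\[ \int g^3 + \Bigl|\int g^2 h\Bigr| + \int g h^2 \ \lesssim\ \|g\|_\infty^2 \|f\|_2^2 \ \lesssim\ \Bigl(\sum_\theta \|f_\theta\|_\infty^2\Bigr)^2 \sum_\theta \|f_\theta\|_2^2, \]
which matches the target bound exactly.

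It remains to control $\int h^3$, which I expect to be the main obstacle. The plan is to introduce intermediate-scale square functions $g_s(x) := \sum_\tau |f_\tau(x)|^2$, with $\tau$ ranging over arcs of length $s$ on the parabola for dyadic $s \in [R^{-1/2}, 1]$, so that $g_{R^{-1/2}} = g$ and $g_1 = |f|^2$. The telescoping decomposition $h = \sum_s (g_{2s} - g_s)$ partitions $h$ into $O(\log R)$ dyadic frequency annuli, each of which, via wave-packet decomposition of $f_\theta$ on $R^{1/2} \times R$ tubes together with pigeonholing on the wave-packet amplitudes, reduces to a rescaled version of the original inequality. The key difficulty is that the naive pointwise bound $\|g_s\|_\infty$ is polynomially worse than $\|g\|_\infty$, since each $f_\tau$ aggregates roughly $sR^{1/2}$ caps; recovering the correct $L^\infty$ control at each intermediate scale requires a broad/narrow (bilinear) reduction that exploits the transversality of the parabola, and a careful localization to spatial balls adapted to the scale $s$. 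Closing this iteration with only a constant loss per scale, accumulating to $(\log R)^c$ across the $O(\log R)$ scales, is the central technical challenge of the proof.
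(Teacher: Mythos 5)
Your preliminary reductions are correct but handle only the part of the problem that was already easy. Writing $|f|^2 = g + h$ with $g = \sum_\theta |f_\theta|^2$, the bounds $\int g^3 \le \|g\|_\infty^2\int g$, $|\int g^2 h| \le (\int g^4)^{1/2}(\int h^2)^{1/2}$, and $\int g h^2 \le \|g\|_\infty \int h^2$, together with the C\'ordoba $L^4$ estimate $\int h^2 \lesssim \int g^2 \le \|g\|_\infty \int g$, do give the desired bound for the first three terms of $(g+h)^3$. But all of the difficulty of $(\ell^2,L^6)$ decoupling lives in the remaining term $\int h^3$, and for that term you offer only a plan that you yourself flag as ``the central technical challenge,'' with no mechanism to carry it out. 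That is a genuine gap, not a deferral of routine details.

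Two concrete problems with the $\int h^3$ plan. First, the accounting of losses is off: you say a ``constant loss per scale, accumulating to $(\log R)^c$ across the $O(\log R)$ scales,'' but a constant factor $C>1$ per scale over $\sim \log R$ scales gives $C^{\log R} = R^{\log C}$, which is a power of $R$, not a power of $\log R$. To get a polylogarithmic bound one needs either a loss of $1 + O(1/\log R)$ at each iterated step, or losses concentrated in $O(1)$ of the $\log R$ steps. The paper achieves this through several devices that are absent from your sketch: it restricts to a \emph{well-spaced} collection of caps (Lemma~\ref{well-spaced}) so that the Low Lemma (Lemma~\ref{low}) holds in the clean form $|g_k^\ell| \le g_{k+1}$ with constant exactly $1$; it controls $\|\widecheck{\eta}_k\|_1 \le 1 + c/\log R$ by a bespoke truncated-Gaussian construction (Lemma~\ref{etalem}); and it keeps the convolution kernels $\p_{\tilde T_{\tau_k}}$ under control uniformly over $N$ iterations (Lemma~\ref{P}). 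Second, your reduction ``to a rescaled version of the original inequality'' is precisely the induction-on-scales step that in Bourgain--Demeter and in Li's work produces super-polylogarithmic constants; the paper deliberately \emph{avoids} running the main estimate through an induction on scales, and instead argues on the level set $\{|f|\sim\alpha\}$, prunes wave packets of $f_{\tau_k}$ above amplitude $\lambda \approx r/\alpha$ at every intermediate scale (Definition~\ref{fk}), partitions $Q_R$ into sets $\Omega_k$ where the high-frequency part of the smoothed square function $g_k$ dominates (Definition~\ref{omega}, Lemma~\ref{high}), applies bilinear restriction on those sets, and uses the $L^\infty$ bound on the pruned pieces $\tilde f_{\tau_k}$ to close. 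Your ``telescoping $h = \sum_s(g_{2s}-g_s)$'' decomposition is not the object the paper analyzes: the paper decomposes the square function $g_k$ itself into high and low frequency parts at each intermediate scale, rather than decomposing $|f|^2 - g$. Without the pruning and without a sharp Low Lemma, there is no way in your framework to prevent the losses from blowing up to $R^\epsilon$.
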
 

The proof of Theorem~\ref{Main}  is related to an incidence estimate between points and rectangles used in  \cite{DGW} and \cite{GWZ}.  These arguments are based on the following idea.  We consider the square function $g = \sum_\theta |f_\theta|^2$, and we divide it into a high-frequency part and a low frequency part.  For the high frequency part, the different terms $| f_\theta|^2$ are essentially orthogonal, and this gives a powerful tool when the high frequency part of $g$ dominates.  When the low frequency part of $g$ dominates, we try to reduce the whole problem to a similar problem at a coarser scale.  

We use these tools to give a different proof of decoupling for the parabola.  Compared to the two previous proofs (by Bourgain-Demeter \cite{BD14} and Li \cite{ZLiEff}, our proof leans less heavily on induction on scales, and we think this is the main reason it gives a stronger estimate.  In order to obtain the $(\log R)^{c'}$ bound, we also need to deal carefully with a number of technical difficulties. These include  a wave packet decomposition using Gaussian partitioning of unity, carefully modifying the function at each scale and reducing to a well-spaced frequency case. We give an intuitive explanation of the argument in Section 1. 
 
 The bound $(\log R)^{c'}$ is useful compared to $R^{\epsilon}$ in some diophantine equation problems. Let $\Lambda_m =\{ (x, y)\in \mathbb{Z}^2, x^2+y^2=m\}$.  In \cite{BB15}, Bombieri and Bourgain studied the number of solutions of the system $\lambda_1+\lambda_2+\lambda_3=\lambda_4+\lambda_5+\lambda_6$ with $\lambda_j \in \Lambda_m$. In \cite{ZLi}, Li and Bourgain applied decoupling to this problem.  They were able to prove a very strong bound for the number of solutions provided that $\Lambda_m$ is very large.  Using our stronger estimate for $D_6(R)$, we can extend their bound to a wider range of $\Lambda_m$.   We present this application in the appendix.  
  
 Another corollary of Theorem~\ref{decoupling} concerns the discrete Fourier restriction on $\{(n, n^2), n\in \mathbb{Z}\}$. 
 \begin{corollary}Let $K_p(N)$ denote the smallest constant such that for any $\{a_n\}_{|n|\leq N}$, 
 	$$\|\sum_{|n|\leq N} a_n e^{2\pi i (nx+n^2t)} \|_{L^p(\mathbb{T}^2)}\lesssim K_p(N) (\sum_{|n|\leq N} |a_n|^2)^{1/2}.$$
 	Then $K_6(N)\lesssim (\log N)^{c'}$. 
 \end{corollary}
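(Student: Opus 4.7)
The plan is to deduce the corollary from Theorem~\ref{Main} by rescaling the discrete sum so its frequencies lie on $\mathbb{P}^1$ at the decoupling scale. Set $R:=N^2$ and define
\[G(y_1,y_2):=F(y_1/N,y_2/N^2)=\sum_{|n|\le N}a_n e^{2\pi i(ny_1/N+n^2y_2/N^2)},\]
whose frequencies $(n/N,n^2/N^2)$ lie on $\mathbb{P}^1$ with consecutive horizontal spacing $R^{-1/2}=N^{-1}$. The function $G$ is periodic with periods $N$ in $y_1$ and $R=N^2$ in $y_2$. Since $Q_R:=[0,R]^2$ contains $N$ periods in $y_1$ and exactly $1$ period in $y_2$, a change of variables gives $\|G\|_{L^6(Q_R)}^6=N^4\|F\|_{L^6(\mathbb{T}^2)}^6=R^2\|F\|_{L^6(\mathbb{T}^2)}^6$.

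To apply Theorem~\ref{Main}, I multiply by a smooth cutoff. Fix a nonnegative Schwartz function $\eta$ with $\widehat{\eta}$ supported in $\{|\xi|\le(10R)^{-1}\}$, $\eta\ge 1$ on $Q_R$, $\|\eta\|_{L^\infty}\lesssim 1$, and $\|\eta\|_{L^2}^2\lesssim R^2$; such an $\eta$ can be produced as $|\widehat{\rho}|^{\,2}$ for a suitable nonnegative $\rho$ supported in a ball of radius $\sim R^{-1}$ with $\|\rho\|_{L^1}\sim 1$. Setting $H:=G\eta$, its Fourier transform $\sum_n a_n\widehat{\eta}(\cdot-(n/N,n^2/N^2))$ is supported in disjoint balls of radius $(10R)^{-1}$ around lattice points on $\mathbb{P}^1$, hence in $\mathcal{N}_{R^{-1}}(\mathbb{P}^1)$. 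Choosing the tiling $\{\theta\}$ so that each cap (of size $\sim N^{-1}\times N^{-2}$) is centered at a lattice point, each $\theta$ contains exactly one frequency and $H_\theta(y)=a_n\eta(y)e^{2\pi i(ny_1/N+n^2y_2/N^2)}$; consequently $\|H_\theta\|_{L^\infty}\lesssim|a_n|$ and $\|H_\theta\|_{L^2}^2\lesssim|a_n|^2R^2$.

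Theorem~\ref{Main} applied to $H$ on $Q_R$ then yields
\[\|H\|_{L^6(Q_R)}^6\le(\log R)^c\Big(\sum_n\|H_\theta\|_{L^\infty}^2\Big)^2\sum_n\|H_\theta\|_{L^2}^2\lesssim(\log R)^cR^2\Big(\sum_n|a_n|^2\Big)^3.\]
Since $\eta\ge 1$ on $Q_R$, the left side dominates $\|G\|_{L^6(Q_R)}^6=R^2\|F\|_{L^6(\mathbb{T}^2)}^6$; cancelling the factor $R^2$ and using $\log R\sim\log N$ produces $\|F\|_{L^6(\mathbb{T}^2)}\lesssim(\log N)^{c'}(\sum_n|a_n|^2)^{1/2}$, which is the desired bound. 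The only delicate point is the construction of $\eta$: one needs simultaneously a tight Fourier support (so that the frequency balls around lattice points remain inside their respective decoupling caps) and a lower bound on a cube of sidelength $R$, which sits at the uncertainty threshold; once $\eta$ is in hand, everything else is mechanical.
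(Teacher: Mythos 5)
Your proof is correct; it is the standard transference argument that deduces a discrete $\ell^2\to L^6$ restriction bound from a continuous decoupling statement, and the paper itself simply states this corollary without proof (it is treated as routine). Your argument closely parallels what the paper does in the appendix for the circle analogue (Corollary~5.1), where the weight function is built as a finite sum of Gaussian bumps $\psi=\sum_Q\varphi_Q$ rather than as $|\widecheck{\rho}|^2$, but both constructions serve the identical purpose. The one point worth flagging is notational rather than substantive: with $\widehat{\eta}$ supported in a ball of radius $(10R)^{-1}$ and $\eta=|\widecheck{\rho}|^2$ with $\|\rho\|_{L^1}=1$, one has $\eta\le 1$ everywhere with equality only at the origin, so the requirement ``$\eta\ge 1$ on $Q_R$'' as literally stated is impossible; what you actually get, and all you need, is $\eta\gtrsim 1$ on $Q_R$, which you then absorb into the implied constant (equivalently, multiply $\eta$ by a harmless absolute constant). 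A minor stylistic difference from the paper's own treatment is that you apply Theorem~\ref{Main} directly, which cleanly produces the $R^2$ factor from $\|\eta\|_{L^2}^2$; the appendix proof of Corollary~5.1 routes through the $(\ell^2,L^6)$ form of Theorem~\ref{decoupling} instead, at the cost of tracking $\|\eta\|_{L^6}$. Both are fine, and yours is arguably a bit more economical.
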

 
 \noindent Bourgain showed in  \cite{Bourgain93} ((2.51), Proposition 2.36) that $ c(\log N)^{1/6} \leq K_6(N) \leq \exp (c \frac{\log N}{\log \log N})  $.   He also asked whether $K_p(N)$ is bounded independent of $N$ for each $p < 6$.  
  
%

\section{Intuitive explanation of the argument \label{intuition} }

In this section, we outline the main ideas of the proof.  For simplicity, we suppress some minor technical details, but at the end we will discuss the most important technical issues that come up.  

Let $\text{Dec}(R)$ be the optimal constant in the decoupling  inequality
\[  \|f\|_{L^6(Q_R)}\le\text{Dec}(R) (\sum_\theta\|f_\theta\|_{L^6(\w_R)}^2)^{1/2}  . \]
Here the $\theta$ denote $\sim R^{-1/2}\times R^{-1}$ approximate rectangles which partition an $R^{-1}$ neighborhood of $\mb{P}^1$ and $\widehat{f_\theta}=\widehat{f} \chi_\theta$ where $f$ is a Schwartz function. The original arguments of Bourgain and Demeter to prove that $\text{Dec}(R)\le C_\e R^\e$ involve analysis of $f_{\tau}$ where $\tau$ is a rectangle in a neighborhood of $\mb{P}^1$ and $\tau$ is at various scales between $1$ and the final scale $R$. This is also true for the proof of Theorem \ref{Main}, which involves analysis of $f$ at $\sim \log R$ many scales. 
Use the notations $A\lesssim B$ and $A\lessapprox B$ to mean $A\le CB$ and $A\le (\log R)^CB$, respectively, for some absolute constant $C$.

By a standard pigeonholing argument (see \textsection\ref{pigeonholing}), the decoupling inequality above follows from the estimate
\begin{equation} \label{mainest} \a^6|\{x\in Q_R:|f|\sim\a\}|\lessapprox \big(\sum_\theta\|f_\theta\|_{L^\infty(Q_R)}^2\big)^2\sum_\theta\|f_\theta\|_{L^2(Q_R)}^2  \end{equation}

\noindent where we may assume that for each $\theta$,

\begin{equation} \label{pigeonintuit} \|f_\theta\|_{L^\infty(Q_R)}\sim 1 \textrm{ or } f_\theta = 0 \end{equation}

\noindent and that $\|f_\theta\|_{L^p(Q_R)}$ are comparable for all non-zero $f_\theta$ and all $2 \le p \le 6$.  Note that inequality (\ref{mainest}) is also (roughly) the statement of Theorem \ref{Main}. In this section, we are suppressing the weight functions localized to $Q_R$ which are present in the $L^2$-norms on the right hand side above. 

Recall the reverse square function estimate for $L^4$, which says that
\begin{equation}\label{revsq}  
\a^4|\{x\in Q_R:|f|\sim\a\}|\lesssim \int_{Q_R}\big(\sum_\theta|f_\theta|^2\big)^2 . \end{equation}
If we use Minkowski's integral inequality to interchange the $\ell^2$ and $L^4$ norms, then we immediately get $L^4$-decoupling
\begin{equation}\label{l2L4}  \a^4|\{x\in Q_R:|f|\sim\a\}|\lesssim \Big(\sum_\theta\big(\int_{Q_R}|f_\theta|^4\big)^{1/2} \Big)^{2}. \end{equation}
The heart of our argument involves analyzing special cases where we can upgrade (\ref{revsq}) into something that implies $L^6$-decoupling. We will describe the simplest special case of the argument now. 

\vspace*{2mm}
\noindent{\bf{Special case: High frequency dominance.}} 
\vspace{2mm}
Consider the square function that appears on the right hand side of (\ref{revsq}), $\sum_\theta |f_\theta|^2$. Each summand $|f_\theta|^2=f_\theta\overline{f_\theta}$ has Fourier support in $\theta-\theta$, which looks like a copy of the $R^{-1/2}\times R^{-1}$ rectangle $\theta$ that is dilated by a factor of $2$ and translated to the origin. Let $\eta$ be a smooth approximation of the characteristic function of a ball of radius $R^{-1/2}/\log R$. Define the low frequency part as 
\[ \big(\sum_\theta|f_\theta|^2\big)_\ell:=\sum_\theta|f_\theta|^2*\widecheck{\eta}\]
and the high part by
\[ \big(\sum_\theta|f_\theta|^2\big)_h:=\sum_\theta|f_\theta|^2-\big(\sum_\theta|f_\theta|^2\big)_\ell.  \]

In this special case, we assume that
\[ \int_{Q_R}\big(\sum_\theta|f_\theta|^2\big)^2\lessapprox \int_{Q_R}\big|\big(\sum_\theta|f_\theta|^2\big)_h\big|^2. \]
We would like to use Plancherel's theorem to analyze the right hand side, but we are integrating over $Q_R$ instead of $\R^2$. This is solved by using a weight function $w_{Q_R}$ to approximate $\chi_{Q_R}$ that has the property that $\widehat{w}_{Q_R}$ is supported in a ball of radius $R^{-1}$. This blurs the support properties of the Fourier transform of $\big(\sum_\theta|f_\theta|^2\big)_h$ by a factor of $R^{-1}$, but does not alter the main properties. In particular, $\big(\sum_\theta|f_\theta|^2\big)_h$ has Fourier transform supported on $\underset{\theta}{\cup}(\theta-\theta)$ intersected with the compliment of the ball centered at the origin of radius $R^{-1/2}/\log R$. Since the $\theta$ cover a small  $R^{-1}$-neighborhood of $\mb{P}^1$ (which curves), the rectangles $\theta-\theta$ are oriented at angles which are pairwise $\gtrsim R^{-1/2}$-separated. The overlap of these rectangles outside of $B(R^{-1/2}/\log R)$ is $\sim \log R$. Thus, by Cauchy-Schwarz, 
\[ \int_{Q_R}\big|\big(\sum_\theta|f_\theta|^2\big)_h\big|^2\lesssim \log R\int_{Q_R} \sum_\theta|f_\theta|^4 \]
where technically the auxiliary function $\eta$ associated to the definition of the high and low frequency parts is absorbed into a weight function discussed in more detail below. 

To summarize, we have so far that
\[ \a^4|\{x\in Q_R:|f|\sim \a\}|\lessapprox \int_{Q_R}\sum_\theta|f_\theta|^4. \]
This means that we have upgraded our $(\ell^2,L^4)$-decoupling result (\ref{l2L4}) into an $(\ell^4,L^4)$-decoupling result. 
Note that for some $x\in Q_R$, $\a\sim|f(x)|\le\underset{\theta}{\sum}\|f_\theta\|_{L^\infty(Q_R)} \sim \underset{\theta}{\sum}\|f_\theta\|_{L^\infty(Q_R)}^2$ and each $\|f_\theta\|_\infty\lesssim 1$. Thus,  
\[ \a^4|\{x\in Q_R:|f|\sim\a\}|\lessapprox \a^{-2}\big(\underset{\theta}{\sum}\|f_\theta\|_{L^\infty(Q_R)}^2\big)^2\sum_\theta\int_{Q_R}|f_\theta|^2 \]
which is the $L^6$-decoupling result we were aiming for and concludes the special case. 

\qed

Suppose that we are not in the high frequency dominating case above, but that we have a high frequency dominance at a different scale $\tilde{R}$: 
\[ \int_{Q_R}\big(\sum_\tau|f_\tau|^2\big)^2\lessapprox \int_{Q_R}\big|\big(\sum_\tau|f_\tau|^2\big)_h\big|^2  .      \]
where the $\tau$ are $\tilde{R}^{-1/2}\times \tilde{R}^{-1}$ rectangles covering a $\tilde R^{-1}$-neighborhood of $\mb{P}^1$ and the high part is with respect to this new scale. If we repeat the above argument, we obtain the $(\ell^4,L^4)$ result
\[ \a^4|\{x\in Q_R:|f|\sim\a\}|\lessapprox \int_{Q_R}\sum_\tau|f_{\tau}|^4. \]
If we try to relate the right hand side to a sum of $L^2$-norms, then
\[ \a^4|\{x\in Q_R:|f|\sim\a\}|\lessapprox \big(\max_{\tau}\|f_\tau\|_\infty^2\big)\sum_\tau\int_{Q_R}|f_{\tau}|^2.  \]
By $L^2$-orthogonality, this is equivalent to
\[ \a^4|\{x\in Q_R:|f|\sim\a\}|\lessapprox \big(\max_{\tau}\|f_\tau\|_\infty^2\big)\sum_\theta\int_{Q_R}|f_{\theta}|^2. \]
The issue now is that in the special case above, we had good control over$\|f_\theta\|_\infty$ given in (\ref{pigeonintuit}), but we don't have any corresponding estimate for  $\|f_\tau\|_\infty$. A key part of the proof is a pruning process for the wave packets of $f_{\tau}$ which will allow us to control $\|f_{\tau}\|_\infty$.  

Our argument will involve many scales, and so we introduce a sequence of intermediate scales and high-low decompositions for each scale. 
Denote the intermediate scales by
\[ 1<R_1<\cdots<R_k<R_{k+1}<\cdots<R_N=R . \]
We will use scales which have the property that
\begin{equation}\label{eq: ratioRk}
 \frac{R_{k+1}}{R_k}\sim (\log R)^c. 
 \end{equation}
Let $\{\tau_k\}$ denote $R_k^{-1/2}\times R_k^{-1}$ rectangles which partition an $R_k^{-1}$ neighborhood of $\mb{P}^1$. Note that for each $k=1,\ldots,N$, 
\begin{equation}\label{eq: decomftauk}
 f=\sum_{\tau_k}f_{\tau_k}. 
\end{equation}
We analyze the square functions
\[ g_k=\sum_{\tau_k}|f_{\tau_k}|^2 .\]
Intuitively since the first scale $R_1\approx 1$, we have by Cauchy-Schwarz that
\[ |f|\lessapprox g_1^{1/2}, \]
and we should think of $g_1$ as being close to $|f|^2$. On the other hand, $g_N$ is our original square function $\sum_\theta |f_\theta|^2$.  

We define a high-low decomposition for $g_k$, building on \cite{DGW} and \cite{GWZ}. As in the special case above, observe that the Fourier transform of $g_k$ is 
\[ \widehat{g_k}=\sum_{\tau_k}\widehat{|f_{\tau_k}|^2}=\sum_{\tau_k}\widehat{f}_{\tau_k}*\widehat{\overline{f}}_{\tau_k}. \]
By definition, $\widehat{f}_{\tau_k}$ has support on $\tau_k$ and $\widehat{\overline{f}}_{\tau_k}$ has support on $-\tau_k$. Thus 
\[  \text{supp} \widehat{|f_{\tau_k}|^2}\subset\tau_k-\tau_k \]
and $\tau_k-\tau_k$ is the same as $\tau_k$ translated to the origin and dilated by a factor of $2$. Since the $\{\tau_k\}$ formed a partition of the neighborhood of the parabola, the Fourier support of $g_k$ is a union of $\sim R_k^{-1/2}\times R_k^{-1}$ rectangles centered at the origin oriented at $\sim R_k^{-1/2}$-separated angles. Note that the intersection of all of these tubes is an $R_k^{-1}$-ball centered at the origin, and outside of some neighborhood of the origin, the tubes look more disjoint (or at least finitely overlapping). This is the setting for a high-low frequency decomposition. 

We separate out a low-frequency part of $g_k$ and a high frequency part of $g_k$.
\begin{definition}\label{def: bumpetak}
  Let $\eta_k(\xi)$ be a bump function 
associated to a ball of radius $\rho_k,$ where   $\rho_k= (\log R)^{-c} R_k^{-1/2}$ with $c$ in \eqref{eq: ratioRk}.  We have $\eta_k(\xi) = 1$ on $B_{\rho_k}$ and $\eta_k(\xi)=0$ outside of $2B_{\rho_k}$. 
\end{definition}
  Define the low frequency part of $g_k$ by  
\[ \widehat{g_{k,\ell}}=\eta_k \widehat{g_k} \]

Let the high part of $g_k$ be equal to 
\[  g_{k,h}=g_k-g_{k,\ell}. \] 

The following lemmas describe the good features of the low-frequency part and the high-frequency part of $g_k$.  

\begin{lemma}[``Low lemma"] Given $\rho_k =(\log R)^{-c} R_k^{-1/2} \le R_{k+1}^{-1/2}$,
\[ |g_{k,\ell}(x)|\le C_{low}\, g_{k+1}*|\widecheck{\eta}_k|(x) \]
\end{lemma}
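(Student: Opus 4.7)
The plan is to expand the finer decomposition $f_{\tau_k}=\sum_{\tau_{k+1}\subset\tau_k}f_{\tau_{k+1}}$ inside each square $|f_{\tau_k}|^2$, show that after frequency-truncating by $\eta_k$ only nearly-diagonal pairs of $\tau_{k+1}$'s survive, and then bound those surviving cross terms pointwise by diagonal squares $|f_{\tau_{k+1}}|^2$ via AM--GM. Writing
\[ |f_{\tau_k}|^2 = \sum_{\tau_{k+1},\tau_{k+1}'\subset\tau_k} f_{\tau_{k+1}}\overline{f_{\tau_{k+1}'}}, \]
one has $g_{k,\ell}=g_k*\widecheck{\eta}_k=\sum_{\tau_k}\sum_{\tau_{k+1},\tau_{k+1}'\subset\tau_k}(f_{\tau_{k+1}}\overline{f_{\tau_{k+1}'}})*\widecheck{\eta}_k$.

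The next step is the Fourier-geometric reduction. Each cross term $f_{\tau_{k+1}}\overline{f_{\tau_{k+1}'}}$ has Fourier support in $\tau_{k+1}-\tau_{k+1}'$. Parametrizing the centers of $\tau_{k+1}$ and $\tau_{k+1}'$ as $(\xi_0,\xi_0^2)$ and $(\xi_0',(\xi_0')^2)$, the horizontal projection of $\tau_{k+1}-\tau_{k+1}'$ lies in $\xi_0-\xi_0'\pm O(R_{k+1}^{-1/2})$. Since $\eta_k$ is supported in $2B_{\rho_k}$ and $\rho_k\le R_{k+1}^{-1/2}$, the product $\eta_k\cdot\widehat{f_{\tau_{k+1}}\overline{f_{\tau_{k+1}'}}}$ vanishes unless $|\xi_0-\xi_0'|\lesssim R_{k+1}^{-1/2}$, i.e., unless $\tau_{k+1}$ and $\tau_{k+1}'$ are $O(1)$-neighbors inside their common $\tau_k$. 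Writing $\tau_{k+1}\sim\tau_{k+1}'$ for this relation, each $\tau_{k+1}$ has only $O(1)$ such partners.

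Next I would push absolute values through the convolution using $|F*\widecheck{\eta}_k|\le |F|*|\widecheck{\eta}_k|$ and then apply AM--GM to the cross terms to obtain
\[ |g_{k,\ell}(x)|\le \tfrac12\sum_{\tau_{k+1}\sim\tau_{k+1}'}\big(|f_{\tau_{k+1}}|^2+|f_{\tau_{k+1}'}|^2\big)*|\widecheck{\eta}_k|(x)\le C_{low}\sum_{\tau_{k+1}}|f_{\tau_{k+1}}|^2*|\widecheck{\eta}_k|(x), \]
where the last step uses the $O(1)$-partner bound. The right-hand side is exactly $C_{low}\,g_{k+1}*|\widecheck{\eta}_k|(x)$.

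The main obstacle is the geometric disjointness claim used in the second paragraph: one must verify that the hypothesis $\rho_k\le R_{k+1}^{-1/2}$ really does force $(\tau_{k+1}-\tau_{k+1}')\cap 2B_{\rho_k}=\emptyset$ for all non-neighboring pairs, uniformly in the possibly tilted rectangles $\tau_{k+1}$ (whose long axes have slope $2\xi_0$). The vertical component $\xi_0^2-(\xi_0')^2=(\xi_0-\xi_0')(\xi_0+\xi_0')$ of the translate only reinforces separation from the origin, so the binding constraint is the horizontal one, which is precisely what the condition $\rho_k\le R_{k+1}^{-1/2}$ is calibrated for.
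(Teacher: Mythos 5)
Your proof is correct and coincides with the paper's own rigorous argument. The sketch attached to this particular lemma informally appeals to local $L^2$-orthogonality of the $f_{\tau_{k+1}}$ against the slowly varying kernel $|\widecheck{\eta}_k|$, but the paper makes this rigorous a few paragraphs later in Lemma~\ref{babylow} (the ``Baby low lemma'') by exactly your route: expand $|f_{\tau_k}|^2$ into cross terms $f_{\tau_{k+1}}\overline{f_{\tau_{k+1}'}}$, use the Fourier-support constraint that $\tau_{k+1}-\tau_{k+1}'$ must meet $\operatorname{supp}\eta_k\subset 2B_{\rho_k}$ to discard all but equal-or-adjacent pairs, and bound the surviving near-diagonal cross terms by AM--GM, obtaining $C_{low}=2$. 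Your geometric check in the last paragraph (the horizontal projection controls intersection with the ball, and the vertical component only helps) is also the right observation and is consistent with the paper's treatment.
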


\begin{proof} [Sketch] First write $g_{k,\ell}$ 
\begin{align*}
    g_{k,\ell}(x)&=\sum_{\tau_k}\int|f_{\tau_k}|^2(y)\widecheck{\eta}_{k}(x-y)dy .
\end{align*} 
Each $\tau_k$ is a union of $R_{k+1}^{-1/2}\times R_{k+1}^{-1}$ rectangles, so we write $f_{\tau_k}=\sum\limits_{\tau_{k+1}\subset\tau_k} f_{\tau_{k+1}}$. Morally, the $f_{\tau_{k+1}}$ are orthogonal in $L^2$ on balls of radius $\gtrsim R_{k+1}^{1/2}$. Since $|\widecheck{\eta}_{k}|$ is locally constant on balls of radius $\sim \rho_k^{-1} \gtrsim R_{k+1}^{1/2}$, we invoke the local orthogonality of the $f_{\tau_{k+1}}$ to conclude 
\[ |g_{k,\ell}(x)|\lesssim \sum_{\tau_k}\sum_{\tau_{k+1}\subset\tau_k}\int|f_{\tau_{k+1}}|^2(y)|\widecheck{\eta}_k|(x-y)dy. \]
\end{proof}

Since $g_{k+1}$ is locally constant on balls of radius $\sim R_{k+1}^{1/2}$, we may also update the low lemma above to say

\begin{corollary}[White lie! ``Low corollary"]
	\[ g_{k,\ell}(x)\le C_{low}\,g_{k+1}(x). \]
\end{corollary}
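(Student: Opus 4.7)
The plan is to combine the Low lemma with an uncertainty-principle argument for $g_{k+1}$ so as to absorb the convolution with $|\widecheck{\eta}_k|$ into the value of $g_{k+1}$ itself. Starting from the Low lemma, one has $|g_{k,\ell}(x)|\le C_{low}\,g_{k+1}*|\widecheck{\eta}_k|(x)$. Since $\eta_k$ is a smooth bump on $B(0,2\rho_k)$, its inverse Fourier transform $\widecheck{\eta}_k$ has $L^1$ norm $\lesssim 1$ and is concentrated, with Schwartz decay, on a ball of radius $\rho_k^{-1}=(\log R)^c R_k^{1/2}$.

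Next I would show that $g_{k+1}$ is essentially constant on balls of radius $R_{k+1}^{1/2}$. Each term $|f_{\tau_{k+1}}|^2=f_{\tau_{k+1}}\overline{f_{\tau_{k+1}}}$ has Fourier support in $\tau_{k+1}-\tau_{k+1}$, a rectangle of dimensions $\sim R_{k+1}^{-1/2}\times R_{k+1}^{-1}$ centered at the origin; summing over $\tau_{k+1}$, the Fourier support of $g_{k+1}$ lies inside $B(0,CR_{k+1}^{-1/2})$. Writing $g_{k+1}=g_{k+1}*\phi$ for a Schwartz function $\phi$ with $\widehat{\phi}\equiv 1$ on this ball and rapid decay at scale $R_{k+1}^{1/2}$ gives the advertised local constancy. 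Since the hypothesis $\rho_k\le R_{k+1}^{-1/2}$ forces $\rho_k^{-1}\ge R_{k+1}^{1/2}$, the mollifier $|\widecheck{\eta}_k|$ averages $g_{k+1}$ over a scale at least as coarse as its local-constancy scale, and I would conclude $g_{k+1}*|\widecheck{\eta}_k|(x)\lesssim g_{k+1}(x)$ by pulling $g_{k+1}(x)$ out of the integral and invoking $\|\widecheck{\eta}_k\|_{L^1}\lesssim 1$.

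The main obstacle, flagged by the authors with the label \emph{White lie!}, is that this final step is not literally true as a pointwise inequality. The local constancy of $g_{k+1}$ at scale $R_{k+1}^{1/2}$ is really a statement bounding $g_{k+1}(x)$ by a Schwartz-weighted average of $g_{k+1}$ over balls of that radius around $x$, and the Schwartz tails of $\widecheck{\eta}_k$ beyond radius $\rho_k^{-1}$ still contribute values of $g_{k+1}$ from far away. Consequently, in a rigorous treatment the corollary should really be a bound of the form $g_{k,\ell}(x)\le C\,(g_{k+1}*w)(x)$ for a nonnegative weight $w$ adapted to the ball of radius $\rho_k^{-1}$ and equipped with rapid decay that handles the tails. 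The symbol $g_{k+1}(x)$ on the right in the stated corollary should be understood as shorthand for such a weighted convolution; making this precise --- and then propagating the weights correctly through the multi-scale argument --- is exactly the sort of ``technical difficulty'' the authors warn about in the introduction to the intuitive explanation.
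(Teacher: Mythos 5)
Your argument matches the paper's: the informal ``proof'' of this white-lie corollary is exactly the local constancy of $g_{k+1}$ at scale $R_{k+1}^{1/2}\le\rho_k^{-1}$ combined with $\|\widecheck{\eta}_k\|_{L^1}\lesssim 1$ in the Low lemma, and you correctly identify why this is not a literal pointwise inequality. One small remark on your last paragraph: rather than replacing the right-hand side by a weighted convolution $g_{k+1}*w$ as you suggest, the paper's rigorous Section~3 fix is to bake the averaging into $g_k$ itself, redefining $g_k:=\sum_{\tau_k}|f_{k+1,\tau_k}|^2*\p_{\tilde T_{\tau_k}}$ with the auxiliary functions $\p_{\tilde T_{\tau_k}}$ engineered so that $\p_{\tilde T_{\tau_k}}*|\widecheck{\eta}_k|\le\p_{\tilde T_{\tau_{k+1}}}$; this yields the exact inequality $|g_k^\ell|\le g_{k+1}+R^{-1000}$ (Lemma~\ref{low}) and avoids an uncontrolled accumulation of convolutions over the $N\sim\log R/\log\log R$ iterations, which is precisely the issue your weighted-convolution version would need to confront.
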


\begin{lemma}[``High lemma"] \label{highoutline} For any $B_{R_k}$ ball, 
\[ \int_{B_{R_k}}|g_{k,h}|^2\lesssim \rho_k^{-1} R_k^{-1/2}\sum_{\tau_k}\int|f_{\tau_k}|^4\w_{R_{k+1}}     \] 
where $1_{B_{R_{k+1}}}\le \w_{R_{k+1}}$ and $\widehat{\w}_{R_{k+1}}$ is supported in a ball of radius $2R_{k+1}^{-1}$. 
\end{lemma}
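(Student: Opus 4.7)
The plan is a weighted Plancherel argument exploiting the geometric fact that the Fourier supports of the $|f_{\tau_k}|^2$, once a ball of radius $\rho_k$ about the origin is removed, have bounded multiplicity $\lesssim \rho_k^{-1} R_k^{-1/2}$.

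First I would pick $\omega_{R_{k+1}}=\psi^2$ with $\psi\ge 0$, $\psi\ge 1$ on $B_{R_{k+1}}\supset B_{R_k}$, and $\widehat{\psi}$ supported in $B(0,R_{k+1}^{-1})$, so that
\[ \int_{B_{R_k}}|g_{k,h}|^2 \le \int |g_{k,h}|^2\,\omega_{R_{k+1}} = \sum_{\tau_k,\tau_k'} \langle F_{\tau_k}\psi, F_{\tau_k'}\psi\rangle, \qquad F_{\tau_k}:=(|f_{\tau_k}|^2)_h. \]
Now $\widehat{F_{\tau_k}}$ lives in $(\tau_k-\tau_k)\setminus B(0,\rho_k)$, and convolving with $\widehat{\psi}$ fattens this set by only $R_{k+1}^{-1}$; the scale hierarchy $R_{k+1}\sim(\log R)^c R_k$ and $\rho_k=(\log R)^{-c}R_k^{-1/2}$ gives $R_{k+1}^{-1}\ll\rho_k$, so $\widehat{F_{\tau_k}\psi}$ is still contained in a slight thickening of $(\tau_k-\tau_k)\setminus B(0,\rho_k/2)$.

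The geometric core is the overlap count. The rectangles $\tau_k-\tau_k$ are $R_k^{-1/2}\times R_k^{-1}$ slabs centered at the origin whose long axes trace the tangent directions to $\mb{P}^1$ and are therefore $R_k^{-1/2}$-separated in angle. At a point $\xi$ with $|\xi|=r\ge \rho_k/2$, membership of $\xi$ in $\tau_k-\tau_k$ forces the tangent direction of $\tau_k$ to lie within angle $\lesssim R_k^{-1}/r$ of $\xi$; hence at most $\lesssim (R_k^{-1}/r)/R_k^{-1/2}\le \rho_k^{-1}R_k^{-1/2}=:M$ of the $\tau_k$'s qualify. By Plancherel, $\langle F_{\tau_k}\psi, F_{\tau_k'}\psi\rangle=0$ unless the fattened Fourier supports overlap, which happens for at most $M$ values of $\tau_k'$ per $\tau_k$. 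Cauchy--Schwarz and AM--GM then yield
\[ \sum_{\tau_k,\tau_k'} |\langle F_{\tau_k}\psi, F_{\tau_k'}\psi\rangle| \lesssim M \sum_{\tau_k}\int |F_{\tau_k}|^2\,\omega_{R_{k+1}}. \]

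Finally I would replace $|F_{\tau_k}|^2$ by $|f_{\tau_k}|^4$. Writing $F_{\tau_k}=|f_{\tau_k}|^2-|f_{\tau_k}|^2*\widecheck{\eta}_k$, Cauchy--Schwarz on the convolution together with $\|\widecheck{\eta}_k\|_1\lesssim 1$ gives $|F_{\tau_k}|^2\lesssim |f_{\tau_k}|^4+|f_{\tau_k}|^4*|\widecheck{\eta}_k|$; pairing against $\omega_{R_{k+1}}$ and moving the convolution onto the weight produces $\omega_{R_{k+1}}*|\widecheck{\eta}_k|\lesssim \omega_{R_{k+1}}$, since $\omega_{R_{k+1}}$ is essentially constant on the scale $\rho_k^{-1}$ of $\widecheck{\eta}_k$ (note $\rho_k^{-1}\ll R_{k+1}$). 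Chaining these estimates gives the claimed bound.

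The main obstacle is the geometric overlap count, which rests essentially on the curvature of $\mb{P}^1$ and the resulting $R_k^{-1/2}$-angular separation of the long axes of $\tau_k-\tau_k$; everything else is bookkeeping once the scale hierarchy $R_{k+1}^{-1}\ll\rho_k$ and $\rho_k^{-1}\ll R_{k+1}$ is verified.
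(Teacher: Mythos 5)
Your proposal is correct and follows essentially the same route as the paper: multiply by a weight whose Fourier transform is supported at scale $R_{k+1}^{-1}$, pass to the Fourier side, and exploit that the sets $(\tau_k-\tau_k)\setminus B(0,\rho_k)$ (slightly fattened by $R_{k+1}^{-1}\ll\rho_k$) overlap at most $\rho_k^{-1}R_k^{-1/2}$ times, then finish with Cauchy--Schwarz and Plancherel. The only cosmetic difference is that you write the weight as $\psi^2$ and pair $F_{\tau_k}\psi$ against $F_{\tau_{k}'}\psi$, whereas the paper writes $\int\widehat{g}_{k,h}(\widehat{\overline{g}}_{k,h}*\widehat{\w}_{R_{k+1}})$ directly; these are two phrasings of the same weighted Plancherel argument.
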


Remark. Since $\rho_k = (\log R)^{-c} R_k^{-1/2}$, and so the factor $\rho_k^{-1} R_k^{-1/2}$ is bounded by $(\log R)^c$.  

\begin{proof}[Sketch of ``High Lemma"]
\begin{align*}
    \int_{B_{R_{k+1}}}|g_{k,h}|^2&\le \int|g_{k,h}|^2\w_{R_{k+1}}\\
    &=\int\widehat{g}_{k,h}\,(\,\widehat{\overline{g}}_{k,h}*\widehat{\w}_{R_{k+1}}). 
\end{align*}     
The Fourier transform of $g_{k,h}$ is supported outside of a ball centered at the origin of radius $\rho_k$. The support of $\widehat{\overline{g}}_{h,k}*\widehat{\w}_{R_{k+1}}$ is outside of a ball of radius $\rho_k -R_{k+1}^{-1}\sim \rho_k$. This means that the previous displayed math equals
\[ \int_{|\xi|\gtrsim \rho_k} \widehat{g}_{k,h}\,(\,\widehat{\overline{g}}_{k,h}*\widehat{\w}_{R_{k+1}}) \sim\sum_{\tau_k}\sum_{\tau_k'}\int_{|\xi|\gtrsim \rho_k} \widehat{|f_{\tau_k}|^2}\,(\,\widehat{\overline{|f_{\tau_k'}|^2}}*\widehat{\w}_{R_{k+1}}). \]

On the region $|\xi|\gtrsim \rho_k$, the support of $\widehat{|f_{\tau_k}|^2}$ intersects the support of $|\widehat{f_{\tau_k'}|^2}*\widehat{\w}_{R_{k+1}}$ for at most $\rho_k^{-1} R_k^{-1/2}$ many $\tau_k'$. It follows from Plancherel's theorem and Cauchy-Schwarz that
\[ \sum_{\tau_k}\sum_{\tau_k'}\int_{|\xi|\gtrsim \rho_k}\widehat{|f_{\tau_k}|^2}\,(\,\widehat{|f_{\tau_k'}|^2}*\widehat{\w}_{R_{k+1}})\lesssim \rho_k^{-1} R_k^{-1/2}\sum_{\tau_k}\int|f_{\tau_k}|^4\w_{R_{k+1}}. \]

\end{proof}

The ``low lemma'' tells us that

$$ g_k = g_{k, \ell} + g_{k, h} \le C_{low} g_{k+1} + |g_{k,h}|. $$

Therefore, either $g_k(x) \le A |g_{k,h}|$ or $g_k(x) \le \frac{A}{A-1} C_{low} g_{k+1}(x)$.   Here $A$ is a parameter that we can choose later. This leads to a partition of the domain into the following sets:
\[ Q_R=L\sqcup \Omega_1\sqcup\cdots\sqcup \Omega_{N-1}. \]
Define 
\begin{align*} 
\Omega_{N-1}=\{x\in Q_R:g_{N-1}(x)\le A |g_{N-1,h}(x)|\}. 
\end{align*} For $k=1,\ldots,N-2$, define
\begin{align*}
\Omega_k&=\{x\in Q_R\setminus(\Omega_{k+1}\cup\cdots\cup\Omega_{N-1}):g_{k}(x)\le A |g_{k,h}(x)|\}\\
&\subseteq \{x\in Q_R:g_k(x)\le A |g_{k,h}(x)|, g_{\ell} \le \frac{A}{A-1}  C_{low} g_{\ell+1} \textrm{ for $k+1 \le \ell \le N-1$}\}.
\end{align*}

The set $L$ is defined as 
\[ Q_R\setminus(\Omega_1\sqcup\cdots\sqcup \Omega_{N-1})\subseteq \{x\in Q_R:g_{\ell} \le \frac{A}{A-1}  C_{low} g_{\ell+1} \textrm{ for $1 \le \ell \le N-1$}\}.
 \]
Since we have partitioned $Q_R$ into $\sim\log R$ many sets, it suffices to consider the cases 
\begin{equation}\label{eq: lowdominate}
\|f\|_{L^6(Q_R)}\lesssim (\log R)\|f\|_{L^6(L)}
\end{equation}
or for some $k$,
\begin{equation}\label{eq: highdominate}
\|f\|_{L^6(Q_R)}\lesssim (\log R)\|f\|_{L^6(\Omega_k)}. 
\end{equation}

The case where $L$ dominates, which means \eqref{eq: lowdominate} holds,  is simple because for $x \in L$, $|f(x)|^2 \lesssim g_1(x) \lesssim g_N (x) = \sum_{\theta} |f_\theta(x)|^2$, and so

\[ \int_{L}|f|^6\lesssim \int_L(\sum_\theta|f_\theta|^2)^3\le (\sum_\theta\|f_\theta\|_{L^\infty(Q_R)}^2)^2\sum_\theta\|f_\theta\|_{L^2(Q_R)}^2,\]
which implies the conclusion of Theorem \ref{Main}. 

More quantitatively, for $x \in L$, we have the bound

$$|f(x)|^2 \lesssim g_1 \le \left( \frac{A}{A-1} C_{low} \right)^N \sum_{\theta} |f_\theta(x)|^2. $$

\noindent This ultimately gives us a bound for $\text{Dec}(R)$ of the form $\left( \frac{A}{A-1} C_{low} \right)^N$.  Recall that $N \sim \log R / \log \log R$.  We can choose $A$ to be large to control the contribution of $\frac{A}{A-1}$.  But if $C_{low}$ is a constant bigger than 1, then $C_{low}^N$ will be much larger than $(\log R)^C$ (although still smaller than $C_\eps R^\eps$).  We will have to work more carefully in the low lemma to make $C_{low}$ very close to 1.  We will return to discuss this more below.

But first we discuss the case where one of the $\Omega_k$ dominates.  In this case we begin by applying a broad/narrow analysis.  The narrow case is handled by an induction on scales argument.  For the broad case, we consider the set

\[ U:=\{x\in Q_R:|f(x)|\sim \a,\quad|f(x)|\lessapprox \max_{\tau_1,\tau_1' \textrm{ non-adjacent}}|f_{\tau_1}f_{\tau_1'}|^{1/2}(x)\}\]

\noindent where $\tau_1$ and $\tau_1'$ are $R_1^{-1} \times R_1^{-1/2}$ rectangles.  

The $L^\infty$ norm of $g_N$ plays an important role in our argument, so we give it a name:

$$ r = \| g_N \|_{L^\infty(Q_R)}. $$

\noindent Since $r = \| \sum_{\theta} |f_\theta|^2 \|_{L^\infty(Q_R)} \le \sum_\theta \| f_\theta \|_{L^\infty(Q_R)}^2$, the main estimate (\ref{mainest}) follows from the bound

\begin{equation} \label{goal} \alpha^6 | U \cap \Omega_k | \lessapprox r^2 \sum_\theta \| f_\theta \|_{L^2(Q_R)}^2. \end{equation}

Focusing on the broad case allows us to use bilinear restriction, which leads to the following bound:

$$ \a^4 | U \cap \Omega_k | \lessapprox \int_{\Omega_k} g_k^2. $$

\noindent From here, use that $g_k\lessapprox |g_{k,h}|$ on $\Omega_k$ and proceed as in the special case above, using the ``high lemma", Lemma \ref{highoutline}, to obtain 
\[  \a^4|U\cap\Omega_k|\lessapprox \max_{\tau_k}\|f_{\tau_k}\|_{L^\infty(Q_R)}^2\sum_\theta\|f_\theta\|_{L^2(Q_R)}^2. \]
Recall that we must modify the function $f$ to get a good bound for the $\|f_{\tau_k}\|_{L^\infty(Q_R)}$. Here is the idea for modifying $f$.  If $x \in U$, then we know that $|f(x)| \sim \a$.  By the definition of $\Omega_k$, we know that for $x \in \Omega_k$,

\begin{equation} \label{eq: startpruning}
\sum_{\tau_k} |f_{\tau_k}(x)|^2 \le K r, 
\end{equation}

\noindent where $K=(\log R)^c (\frac{A}{A-1}C_{low})^{N-k-1}. $  Even though $K$ might be as large as $C^{\frac{\log R}{\log \log R}}$, this is accounted for in the proof with a more careful definition of $\Omega_k$. 
Inequality~\eqref{eq: startpruning} implies that for $x \in U \cap \Omega_k$, the $f_{\tau_k}$ with $|f_{\tau_k}(x)| > 100 K r / \a$ make a small contribution to $f(x)$.  More precisely, if $x \in U \cap \Omega_k$, then

\begin{equation} \label{highamplneg}    \sum_{\tau_k: |f_{\tau_k}(x)| > 100 K r / \a } | f_{\tau_k}(x)| \le \frac{\a}{100 K r} \sum_{\tau_k} |f_{\tau_k}(x)|^2 \le \frac{\a}{100} \le \frac{1}{10} |f(x)|. \end{equation}

\noindent We define 
\begin{equation}\label{def: lambda}
\lambda = 100 K r \ /a.
\end{equation}
  Roughly speaking, the parts of $f_\tau$ with norm bigger than $\lambda$ don't make a significant contribution to $f$ on the set $U \cap \Omega_k$.  To take advantage of this observation, we divide each $f_{\tau_k}$ into wave packets, and then prune the wave packets with amplitude bigger than $\lambda$.  

The pruning process goes roughly as follows (but this account is a little oversimplified).  First we expand $f_{\tau_k}$ into wave packets,
\begin{equation}\label{eq: babywp}
f_{\tau_k} = \sum_{T} \psi_T f_{\tau_k}.
\end{equation}
\noindent Here $T$ denotes a translate of the dual convex $\tau_k^*$  (see Definition~\ref{def: dual convex}), and the sum is over a collection of translates that tile the plane.  The function $\psi_T$ is a smooth approximation of the characteristic function of $T$, and the $\psi_T$ form a partition of unity.  Each $\psi_T f_{\tau_k}$ is called a wave packet, and it has Fourier support essentially contained in $\tau_k$.  We define $\tilde f_{\tau_k}$ be the result of pruning the high amplitude wave packets from $f_{\tau_k}$:

$$ \tilde f_{\tau_k} = \sum_{T: \| \psi_T f_{\tau_k} \|_\infty \le \lambda} \psi_T f_{\tau_k}. $$

\noindent The Fourier support of $\tilde f_{\tau_k}$ is still essentially contained in $\tau_k$.   Suppose for a moment that $\psi_T$ was just $\chi_T$, the characteristic function of $T$.  Then because of our pruning, $\| \tilde f_{\tau_k} \|_{\infty} \le \lambda$.   Next we define $f_k = \sum_{\tau_k} \tilde f_{\tau_k}$.

To analyze $|U \cap \Omega_k|$, we use the argument above with $f_k$ in place of $f$ and $\tilde f_{\tau_k}$ in place of $f_{\tau_k}$.  There are the key features of $f_k$ that makes this possible:

\begin{itemize}

\item The function $f_k$ is close to $f$ on $U \cap \Omega_k$.  If $\psi_T$ was just $\chi_T$, then the analysis in (\ref{highamplneg}) would show that for $x \in U \cap \Omega_k$, $|f(x) - f_k(x)| \le \frac{1}{100} \a$.  We will ultimately define $f_k$ in a slightly more complicated way, and we will prove this bound for $|f(x) - f_k(x)|$.  

\item We now have the bound $\| \tilde f_{\tau_k} \|_{L^\infty} \le \lambda \approx r / \a$.

\item The function $f_k$ has Fourier support properties similar to those of $f$ so that we can run the argument above.  For instance, the Fourier support of $\tilde f_{\tau_k}$ is essentially contained in $\tau_k$.

\end{itemize}

When we run the argument above with $f_k$ in place of $f$, and then plug in the bound $\| \tilde f_{\tau_k} \|_{L^\infty} \lessapprox r / \a$, we get the estimate

\begin{equation} \label{afterhigh} \a^4 | U \cap \Omega_k | \lessapprox (r/\a)^2 \int_{Q_R}  \sum_{\theta} | f_{\theta}|^2. \end{equation}

\noindent This is our desired estimate (\ref{goal}).  

\subsection{Technical issues}

There are two main sources of technical difficulties that come up in implementing the sketch above.  One has to do with proving the low lemma with very sharp control.  The other has to do with pruning wave packets, which we have to do at many different scales. To make the argument work rigorously, $g_k$ and $f_k$ both have to be defined in a more complex way than they were above.

In order to obtain the bound $\text{Dec}(R) \le (\log R)^c$, we also have to make many small strategic decisions in our argument.  The guiding principle behind all these decisions is the following: to obtain a $(\log R)^c$ bound with an argument involving $N\sim \log R/\log\log R$ steps, we must carefully control any step which involves $N$ iterations. On the other hand, for steps that are only iterated $O(1)$ times, we can prove lossier bounds.

A good example is the low lemma.  If we are not very careful with how we formulate the ``low lemma'', we will get a bound for $\text{Dec}(R)$ which is much larger than $(\log R)^c$.  As we discussed above, if we prove the low lemma in the form $| g_{k, \ell}(x)| \le C_{low} g_{k+1}(x)$, then we will get a bound for $\text{Dec}(R)$ which is at least as big as $C_{low}^N$.  To get our desired bound for $\text{Dec}(R)$, we need $C_{low}$ to be almost 1.

Above we gave a non-rigorous sketch of the low lemma.  To get some perspective, let us now rigorously prove a version of the low lemma to get a perspective on $C_{low}$.

\begin{lemma}[Baby low lemma] \label{babylow} 
Let $\eta_k(\xi)$ be a bump function defined as in Definition~\ref{def: bumpetak}.  Then
$$ |g_{k, \ell}| = |\sum_{\tau_k} | f_{\tau_k}|^2 * \widecheck{\eta}_k | \le 2 \sum_{\tau_{k+1}} |f_{\tau_{k+1}}|^2 * |\widecheck{\eta}_k|. $$
\end{lemma}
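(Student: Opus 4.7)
The plan is to expand $f_{\tau_k}=\sum_{\tau_{k+1}\subset\tau_k}f_{\tau_{k+1}}$ and use the Fourier support restriction to $2B_{\rho_k}$ imposed by convolution with $\widecheck{\eta}_k$ to kill almost all cross terms. I would start from
\[ g_{k,\ell}(x)=\sum_{\tau_k}\sum_{\tau_{k+1},\tau_{k+1}'\subset\tau_k}\bigl(f_{\tau_{k+1}}\overline{f_{\tau_{k+1}'}}\bigr)*\widecheck{\eta}_k(x), \]
noting that each summand has Fourier transform supported in $(\tau_{k+1}-\tau_{k+1}')\cap 2B_{\rho_k}$.

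The key geometric input is that $\rho_k=(\log R)^{-c}R_k^{-1/2}$ is much smaller than the tangent dimension $R_{k+1}^{-1/2}$ of the $\tau_{k+1}$'s (when $c$ is sufficiently large). Parametrizing $\tau_{k+1}$ as an $R_{k+1}^{-1/2}\times R_{k+1}^{-1}$ box tangent to $\mathbb{P}^1$ at some $(a,a^2)$, the difference $\tau_{k+1}-\tau_{k+1}'$ is a box of the same shape centered near $(a-a',a^2-(a')^2)$, and it meets $2B_{\rho_k}$ only when $\tau_{k+1}$ and $\tau_{k+1}'$ are equal or adjacent along $\mathbb{P}^1$.

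To extract the constant $2$, I would split the $\tau_{k+1}\subset\tau_k$ into two alternating classes along the tangent direction, writing $f_{\tau_k}=F_1^{\tau_k}+F_2^{\tau_k}$ and
\[ |f_{\tau_k}|^2=|F_1^{\tau_k}|^2+|F_2^{\tau_k}|^2+2\Re\bigl(F_1^{\tau_k}\overline{F_2^{\tau_k}}\bigr). \]
Within each class, two distinct $\tau_{k+1},\tau_{k+1}'$ have centers at distance $\ge 2R_{k+1}^{-1/2}$, so by the geometric input all intra-class cross terms vanish after convolution with $\widecheck{\eta}_k$, yielding $|F_i^{\tau_k}|^2*\widecheck{\eta}_k=\sum_{\tau_{k+1}\in\text{class }i,\,\tau_{k+1}\subset\tau_k}|f_{\tau_{k+1}}|^2*\widecheck{\eta}_k$. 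For the mixed term I would apply AM-GM $|f_{\tau_{k+1}}\overline{f_{\tau_{k+1}'}}|\le\tfrac12(|f_{\tau_{k+1}}|^2+|f_{\tau_{k+1}'}|^2)$ and $|h*\widecheck{\eta}_k|\le|h|*|\widecheck{\eta}_k|$, together with the fact that each $\tau_{k+1}$ has at most two adjacent partners in the opposite class inside the same $\tau_k$.

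The main obstacle will be to keep the final constant exactly $2$ rather than some larger absolute number: a naive accounting produces $1$ from the diagonal contribution and $2$ from the $2\Re$-mixed term, so the straightforward argument gives $3$. Obtaining the sharp constant likely requires either a more symmetric handling where the mixed contribution is absorbed into the diagonal through AM-GM without a separate factor, or replacing the binary class split with a smoother partition of unity at scale $R_{k+1}^{-1/2}$ so that the different contributions are not double-counted.
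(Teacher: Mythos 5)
Your approach is essentially the same as the paper's proof of Lemma~\ref{babylow}: expand $f_{\tau_k}=\sum_{\tau_{k+1}\subset\tau_k}f_{\tau_{k+1}}$, observe that $\widehat{f}_{\tau_{k+1}}*\widehat{\overline{f}}_{\tau_{k+1}'}$ is supported in $\tau_{k+1}-\tau_{k+1}'$, use that $\eta_k$ is supported in a ball of radius $2\rho_k\le R_{k+1}^{-1/2}$ so only equal-or-adjacent pairs survive, and estimate the surviving cross terms by AM--GM. Your accounting concern is also correct and applies to the paper's own argument: each $\tau_{k+1}$ contributes $1$ from the diagonal plus $\tfrac12$ from each of the (up to four) ordered adjacent pairs involving it, for a tally of $3$, not $2$; the alternating-class split you propose does not change this, since the mixed term $2\Re(F_1\overline{F_2})$ still produces two adjacent unordered pairs per $\tau_{k+1}$ after AM--GM and the intra-class vanishing only recovers what was already the diagonal. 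This is not a genuine gap worth closing: Lemma~\ref{babylow} lives in the expository Section~\ref{intuition}, exists precisely to motivate why one should worry about the loss and switch to the well-spaced setting, and is not invoked in the rigorous proof. The actual Low Lemma, Lemma~\ref{low}, uses the well-spaced hypothesis to make every adjacent cross term vanish (since $\tau_{k+1}-\tau_{k+1}'$ is then separated from the origin by $\tfrac12 R_{k+2}^{-1/2}>2\rho_k$), so no constant accumulates at all. You could either accept constant $3$, or note that the lemma is illustrative and the real argument sidesteps the issue entirely.
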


\begin{proof} We write $\sum_{\tau_k} | f_{\tau_k}|^2 * \widecheck{\eta}_k(x)$ using Fourier inversion.

$$ \sum_{\tau_k} | f_{\tau_k}|^2 * \widecheck{\eta}_k(x) = \sum_{\tau_k} \int \widehat{f}_{\tau_k} * \widehat{ \overline{f}}_{\tau_k} e^{2 \pi i \xi \cdot x} \eta_k(\xi) d \xi. $$

Now $f_{\tau_k} = \sum_{\tau_{k+1} \subset \tau_k} f_{\tau_{k+1}}$, so we can expand out the last expression:

$$ = \sum_{\tau_k} \sum_{\tau_{k+1}, \tau_{k+1}' \subset \tau_k} \int \widehat{f}_{\tau_{k+1}} * \widehat{ \overline{f}}_{\tau_{k+1}'} e^{2 \pi i \xi \cdot x} \eta_k(\xi) d \xi. $$

Now the point is that most of the integrals in the sum above vanish.  The convolution $\widehat{f}_{\tau_{k+1}} * \widehat{ \overline{f}}_{\tau_{k+1}'}$ is supported in $\tau_{k+1} - \tau'_{k+1}$, and $\eta_k$ is  supported in a ball of radius $2\rho_k \le R_{k+1}^{-1/2}$.  Now each rectangle $\tau_{k+1}$ has dimensions $R_{k+1}^{-1} \times R_{k+1}^{-1/2}$.  So $\tau_{k+1} - \tau_{k+1}'$ intersects the support of $\eta_k$ only if $\tau_{k+1}'$ is equal to or adjacent to $\tau_{k+1}$.  We keep only these terms in the sum to get

$$ \sum_{\tau_k} |f_{\tau_k}|^2 * \widecheck{\eta}_k = \sum_{\tau_{k+1}, \tau_{k+1}' \text{equal or adjacent} } (f_{\tau_{k+1}} \overline{f_{\tau_{k+1}'}} )* \widecheck{\eta}_k. $$

For the cross terms, we note that
$$  |(f_{\tau_{k+1}} \overline{f_{\tau_{k+1}'}} ) * \widecheck{\eta}_k|  \le  (|f_{\tau_{k+1}}| |f_{\tau_{k+1}'}|) *  |\widecheck{\eta}_k| \le \left(  \frac{1}{2} |f_{\tau_{k+1}}|^2 + \frac{1}{2} |f_{\tau_{k+1}'}|^2  \right) *  |\widecheck{\eta}_k|.$$

Finally, grouping all the terms gives the desired bound:

$$ |\sum_{\tau_k} | f_{\tau_k}|^2 * \widecheck{\eta}_k | \le 2 \sum_{\tau_{k+1}} |f_{\tau_{k+1}}|^2 * |\widecheck{\eta}_k|. $$

\end{proof}

There are a couple of issues with this bound.  One issue is that we have an unwanted factor of 2 on the right-hand side.  A second issue is that we have a convolution on the right-hand side.  If we take $g_k = \sum_{\tau_k} | f_{\tau_k}|^2$, then we have $|g_{k, \ell}| \le 2 g_{k+1} * |\widecheck{\eta}_k|$.

To deal with the factor of 2, we consider a special case when the Fourier support of $f$ has a helpful spacing condition.  
Let $\Theta=\{\theta\}$ be a collection of $\sim R^{-1/2}\times R^{-1}$ rectangles contained in an $R^{-1}$-neighborhood of $\mb{P}^1$. The collection $\Theta$ has the spacing property at scale $R_k$ if there exists a collection of $\sim R_k^{-1/2}\times R_k^{-1}$-rectangles $\tau_k$ which cover $\cup_{\theta\in\Theta} \theta$ and such that
\[\text{dist}(\tau_k,\tau_k')\ge (\log R)^{-1} R_{k}^{-1/2} \]
whenever $\tau_k$ and $\tau_k'$ are distinct. If $\Theta$ has the spacing property at scales $R_1,\ldots,R_{N-1}$, then say $\Theta$ is well-spaced.  A well-spaced collection of rectangles $\theta$ can include most of the rectangles needed to cover the parabola, and we will be able to reduce our theorem for a general $f$ to the case that the Fourier support of $f$ is well-spaced.  The spacing condition helps us because whenever $\tau_{k+1}$, $\tau_{k+1}'$ are distinct, $\tau_{k+1} - \tau_{k+1}'$ is supported outside the ball of radius $(\log R)^{-1} R_{k+1}^{-1/2}$.  Now we choose $\rho_k \le (\log R)^{-1} R_{k+1}^{-1/2}$, and we see that all the cross terms in Lemma \ref{babylow} vanish.  This gets rid of the factor of 2.   Assuming that $f$ obeys the spacing condition, we conclude that 

$$ |\sum_{\tau_k} | f_{\tau_k}|^2 * \widecheck{\eta}_k | \le \sum_{\tau_{k+1}} |f_{\tau_{k+1}}|^2 * |\widecheck{\eta}_k|. $$

Next we discuss the $* | \widecheck{\eta}_k|$ on the right-hand side.  In order to deal with this factor, we define $g_k$ in a more complicated way.  Our definition of $g_k$ has the following form:

\begin{equation}\label{eq: babypsik}
 g_k:=\sum_{\tau_k}|f_{k+1,\tau_k}|^2*\p_{\tilde{T}_{\tau_k}}.
\end{equation}

Here $f_{k+1, \tau_k}$ is given by pruning high amplitude wave packets from $f_{\tau_k}$, and we will discuss it more below.  The function $\p_{\tilde{T}_{\tau_k}}$ is roughly $ \frac{1}{| \tau_k^*|} \chi_{\tau_k^*}$.  It's a bit bigger than this, so a more accurate model is 

$$ \p_{\tilde{T}_{\tau_k}} \sim \frac{(\log R)^c}{|\tau_k^*|} \chi_{(\log R)^c \tau_k^*}. $$

Let's see why this extra convolution helps us.  In the well-spaced case, the argument above shows that

\begin{equation} \label{lowwithnewgk} | g_k * \widecheck{\eta_k} | \le \sum_{\tau_{k+1}} |f_{k+1, \tau_{k+1}}|^2 * \p_{\tilde{T}_{\tau_k}} * |\widecheck{\eta_k}|. \end{equation}

\noindent On the right-hand side, $\tau_k$ denotes the parent of $\tau_{k+1}$.  We choose the functions $\p_{\tilde{T}_{\tau_k}}$ so that for any $\tau_{k+1} \subset \tau_k$, 

\begin{equation} \label{precur} \p_{\tilde{T}_{\tau_k}} * |\widecheck{\eta_k}| \le \p_{\tilde{T}_{\tau_{k+1}}}. \end{equation}

\noindent With this choice, the right hand side of (\ref{lowwithnewgk}) is bounded by $ \sum_{\tau_{k+1}} |f_{\tau_{k+1}}|^2 * \p_{\tilde{T}_{\tau_{k+1}}} \le g_{k+1}$.  So with this definition, we get $|g_{k, \ell}| \le g_{k+1}$.  (We will prove this in Lemma \ref{low}.)

Redefining $g_k$ in this way makes the statement of the low lemma very clean.  It does have a cost though.  We have to make sure that the contribution of $\p_{\tilde{T}_{\tau_k}}$ is not too big.  To control their size, we have to choose $\eta_k$ carefully, and the key bound is the $\| \widecheck{\eta}_k \|_{L^1} \le 1 + C / \log R$, which is proved in Lemma~\ref{etalem}. 

Finally, let us briefly discuss pruning wave packets.  Our argument involves many different scales and we have to prune wave packets at all of the scales. We can define $f_N$ to be our initial function $f$.  We decompose $f_N$ into wave packets by combining \eqref{eq: decomftauk} and  \eqref{eq: babywp}

$$ f_N = \sum_{\theta} \sum_T \psi_T f_{N, \theta}. $$

\noindent Then we remove the wave packets with amplitude bigger than $\lambda= 100Kr/a$.   The resulting function is called $f_{N-1}$:

$$ f_{N-1} = \sum_{\theta, T: \| \psi_T f_{N, \theta} \|_{\infty} \le \lambda } \psi_T f_{N, \theta}. $$

Next, we decompose $f_{N-1}$ into wave packets at the next scale:

$$ f_{N-1} = \sum_{\tau_{N-1}} \sum_{T_{\tau_{N-1}}} \psi_{T_{\tau_{N-1}}} f_{N-1, \tau_{N-1}}. $$

\noindent Here $\tau_{N-1}$ is a rectangle of dimensions $R_{N-1}^{-1/2} \times R_{N-1}^{-1}$, and $T_{\tau_{N-1}}$ is roughly a tube which is roughly a translate of $\tau_{N-1}^*$.  We remove the wave packets with amplitude bigger than $\lambda$ and call the resulting function $f_{N-2}$.  This iterative pruning is necessary to make our argument work, but it also makes it fairly complex.  In particular, since the pruning has $N$ steps, we have to be very careful with all the estimates related to the pruning process.  For example we have to define the smooth cutoff functions $\psi_T$ carefully.

\section{Proof of Theorem \ref{Main} -- the broad, well-spaced case \label{speccase}}

The argument outlined in the above intuition section leads to the $(\log R)^c$ upper bound in Theorem \ref{Main} for functions which satisfy two extra properties. The function $f$ being \emph{broad} allows us to bound an $L^4$ norm of $f$ by an $L^2$ norm of a square function $g_k$. The property that $f$ is \emph{well-spaced} allows us to replace Lemma \ref{babylow} with
\[ |g_{k,\ell}|\le \sum_{\tau_{k+1}}|f_{\tau_{k+1}}|^2*|\widecheck{\eta}_k| \]
(so we have no accumulated constant after iterating the inequality $\lesssim \log R$ times). Theorem \ref{Main} in the special case of broad, well-spaced functions $f$ is called Proposition \ref{mainp}, which we prove in this section. In \textsection\ref{remove}, we remove the assumptions on $f$.

\subsection{Statement of Proposition \ref{mainp}}

Let $f\in\mc{S}$ have Fourier support in $\mc{N}_{R^{-1}}(\mb{P}^1)$. $\theta$ is always an approximate $R^{-1/2}\times R^{-1}$ rectangle in a neighborhood of $\mb{P}^1$. The property that $f$ is broad means that $\|f\|_{L^6(Q_R)}$ is dominated by the $L^6$-norm of a bilinearized version of $f$. We state the results in terms of a parameter $\a>0$ which measures this bilinearized version of $f$. Precisely, let

\begin{definition} \label{defUa}
\[ U_\a:=\{x\in  \R^2: \max_{\substack{\tau,\tau'\\\text{nonadj.}}}|f_{\tau^{\,}}f_{\tau'}|^{1/2}(x)\sim \a\quad\text{and}\quad \big(\sum_{\tau}|f_{\tau}(x)|^6\big)^{1/6}\le (\log R)^9 \a\} \]
where the maximum is taken over nonadjacent $\sim (\log R)^{-6}\times (\log R)^{-12}$ rectangles $\tau^{\,}$ and $\tau'$.  By $\sim$ here, we mean within a factor of $2$. 
\end{definition}

\vspace*{.2in}

Our argument involves a sequence of scales $R_k$ defined as follows:

\begin{definition} For $k\in\N$, let $R_k=(\log R)^{12k}$. We analyze scale $R_1,\ldots,R_N$ where $R_N=R$. This means that $N=\frac{\log R}{12\log\log R}$. 
\end{definition}

\begin{definition}[Spacing property] Let $\Theta=\{\theta\}$ be a collection of $\sim R^{-1/2}\times R^{-1}$ rectangles contained in an $R^{-1}$-neighborhood of $\mb{P}^1$. The collection $\Theta$ has the spacing property at scale $R_k$ if there exists a collection of $\sim R_k^{-1/2}\times R_k^{-1}$-rectangles $\tau_k$ which cover $\cup_{\theta\in\Theta} \theta$ and such that 
\[\text{dist}(\tau_k,\tau_k')\ge\frac{1}{2} R_{k+1}^{-1/2} \]
whenever $\tau_k$ and $\tau_k'$ are distinct. If $\Theta$ has the spacing property at scales $R_1,\ldots,R_{N-1}$, then say $\Theta$ is well-spaced.  A function $f\in\mc{S}$ is well-spaced if $\hat{f}$ is supported in $\underset{\theta\in\Theta}{\cup}\theta$ for some well-spaced $\Theta$. 
\end{definition}

\vspace*{.2in}
\begin{prop} \label{mainp} There exist $c\in(0,\infty)$ such that for all well-spaced $f\in\mc{S}$,
 \[   \a^6|U_\a\cap Q_R|\le (\log R)^c (\sum_\theta\|f_\theta\|_{L^\infty(\mathbb{R}^2)}^2)^2 \sum_{\theta}\|f_\theta\|_{L^2(\mathbb{R}^2)}^2.\]
\end{prop}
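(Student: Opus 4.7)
The plan is to execute rigorously the outline given in Section~\ref{intuition}, in the setting where $f$ is both broad (as encoded in Definition~\ref{defUa}) and well-spaced. Set $R_k=(\log R)^{12k}$ and let $\{\tau_k\}$ be the coverings guaranteed by the spacing hypothesis. First I build the pruning hierarchy: put $f_N:=f$ and, inductively, define $f_k$ from $f_{k+1}$ by decomposing each $f_{k+1,\tau_{k+1}}$ into smooth wave packets $\p_T f_{k+1,\tau_{k+1}}$ on tubes $T$ translating the convex dual $\tau_{k+1}^*$, and discarding those packets whose $L^\infty$-norm exceeds the threshold $\lambda:=100Kr/\a$, where $r:=\|g_N\|_{L^\infty(Q_R)}\le\sum_\theta\|f_\theta\|_\infty^2$ and $K\approx(\log R)^c$. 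Then define the modified square functions
\[ g_k:=\sum_{\tau_k}|f_{k+1,\tau_k}|^2*\p_{\tilde{T}_{\tau_k}},\]
where $\p_{\tilde{T}_{\tau_k}}$ is a carefully normalized bump essentially supported on $(\log R)^c\tau_k^*$, chosen so that $\p_{\tilde{T}_{\tau_k}}*|\widecheck{\eta}_k|\le\p_{\tilde{T}_{\tau_{k+1}}}$ for every $\tau_{k+1}\subset\tau_k$.

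Next, prove the sharpened versions of the low and high lemmas for these $g_k$. Because of the spacing hypothesis the cross-terms $\tau_{k+1}\ne\tau_{k+1}'$ in the baby low computation vanish after convolution with $\widecheck{\eta}_k$, promoting Lemma~\ref{babylow} to $|g_{k,\ell}|\le g_{k+1}$ with constant exactly one; combined with the estimate $\|\widecheck{\eta}_k\|_{L^1}\le 1+C/\log R$ from Lemma~\ref{etalem}, iterating $N$ times inflates the low corollary by at most $(1+C/\log R)^N=O(1)$. The high lemma, proved as in the intuition section via Plancherel and the observation that $\rho_k^{-1}R_k^{-1/2}=(\log R)^c$ rectangles $\tau_k-\tau_k'$ can interact outside $B_{\rho_k}$, gives $\int|g_{k,h}|^2\le(\log R)^c\sum_{\tau_k}\int|f_{k+1,\tau_k}|^4\w_{R_{k+1}}$. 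Partition $Q_R=L\sqcup\Omega_1\sqcup\cdots\sqcup\Omega_{N-1}$ with $\Omega_k:=\{g_k\le A|g_{k,h}|\}\setminus(\Omega_{k+1}\cup\cdots\cup\Omega_{N-1})$, for a fixed large constant $A$, so that on $L$ every scale is low-dominated.

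On $L$, iterating the low corollary gives $|f|^2\lesssim g_1\lesssim g_N\sim\sum_\theta|f_\theta|^2$ pointwise, and then H\"older yields $\int_L|f|^6\lesssim(\sum_\theta\|f_\theta\|_\infty^2)^2\sum_\theta\|f_\theta\|_2^2$, which is better than required. For each $\Omega_k$, use the broad clause of Definition~\ref{defUa} to reduce to a bilinear restriction/reverse square function estimate applied to the pruned approximant $f_k$, yielding $\a^4|U_\a\cap\Omega_k|\lessapprox\int_{\Omega_k}g_k^2$; this step requires the error bound $|f-f_k|\le\tfrac{1}{10}\a$ on $U_\a\cap\Omega_k$ so that $|f_k|\sim\a$ there. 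Then use $g_k\le A|g_{k,h}|$ on $\Omega_k$ together with the high lemma and local $L^2$-orthogonality at scale $R_{k+1}$ to get
\[ \a^4|U_\a\cap\Omega_k|\lessapprox\max_{\tau_k}\|f_{k+1,\tau_k}\|_\infty^2\,\sum_\theta\|f_\theta\|_2^2.\]
The pruning forces $\|f_{k+1,\tau_k}\|_\infty\lessapprox\lambda\approx r/\a$, which rearranges to $\a^6|U_\a\cap\Omega_k|\lessapprox r^2\sum_\theta\|f_\theta\|_2^2\le(\sum_\theta\|f_\theta\|_\infty^2)^2\sum_\theta\|f_\theta\|_2^2$. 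Summing over the $N\sim\log R/\log\log R$ sets costs only an extra $(\log R)^c$.

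The main obstacle is keeping every multiplicative constant \emph{truly} under control across $N$ iterations, since any factor growing like $C^N$ for $C>1$ exceeds $(\log R)^c$. The three pressure points are: (a) verifying the pruning error bound $|f-f_k|\le\tfrac{1}{10}\a$ on $U_\a\cap\Omega_k$, which rests on the pointwise inequality $\sum_{\tau_k}|f_{k+1,\tau_k}|^2\le Kr$ with $K=(\log R)^c(\tfrac{A}{A-1}C_{low})^{N-k-1}$ implicit in the definition of $\Omega_k$, together with the argument in~\eqref{highamplneg}; (b) proving the low lemma with constant exactly one, which is where the spacing hypothesis is indispensable (without it the cross-terms produce a factor of $2$ per scale and $2^N\gg(\log R)^c$); and (c) arranging the nested bumps $\p_{\tilde{T}_{\tau_k}}$ and the wave-packet cutoffs $\p_T$ simultaneously so that the nesting $\p_{\tilde{T}_{\tau_k}}*|\widecheck{\eta}_k|\le\p_{\tilde{T}_{\tau_{k+1}}}$, the pruning bounds, and the broad/bilinear reduction are all compatible scale by scale. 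These delicate, propagating choices are the true technical heart of the proof.
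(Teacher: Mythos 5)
Your proposal reproduces the high-level strategy of the paper's own proof faithfully and correctly identifies the central ideas: the pruning hierarchy, the modified square functions $g_k$ convolved with nested bumps $\p_{\tilde{T}_{\tau_k}}$, the low lemma sharpened to constant $1$ via the spacing hypothesis, the high lemma with a $(\log R)^c$ overlap factor, and the partition of $Q_R$ into a low-dominated set $L$ and high-dominated pieces $\Omega_k$ handled by bilinear restriction. You also correctly flag the three pressure points.

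There is, however, one concrete gap in the way you set up the $\Omega_k$. You define $\Omega_k=\{g_k\le A|g_{k,h}|\}\setminus(\Omega_{k+1}\cup\cdots\cup\Omega_{N-1})$ ``for a fixed large constant $A$,'' and you then claim $K=(\log R)^c(\tfrac{A}{A-1}C_{low})^{N-k-1}\approx(\log R)^c$. These two statements are incompatible: on the complement of the high-dominated sets one only gets $g_j\le\tfrac{A}{A-1}g_{j+1}$, and iterating $N\sim\log R/\log\log R$ times produces a factor $(\tfrac{A}{A-1})^N\approx e^{N/A}$, which is far larger than $(\log R)^c$ for any fixed constant $A$. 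One would need $A\gtrsim\log R$ for this to work, which is still acceptable since the high-dominance factor $A^2$ appears only once per $\Omega_k$ and $A^2\lesssim(\log R)^2$ — but ``fixed constant'' is wrong and the numerology must be tracked. The paper sidesteps this issue entirely by defining $\Omega_k$ differently (Definition~\ref{omega}): $\Omega_k$ is a union of $R_k^{1/2}$-cubes $Q_k$ satisfying the explicit threshold condition $(1+\delta)^{N-k}r+(N-k)R^{-500}<\|g_k\|_{L^\infty((\log R)^9Q_k)}$ with $\delta=1/\log R$, so that $(1+\delta)^N$ is automatically $O(1)$; high-dominance $\|g_k\|_\infty\le2(\log R)\|g_k^h\|_\infty$ over cubes is then \emph{derived} as Lemma~\ref{high}, not assumed. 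This cube-level formulation is also what makes the local bilinear restriction (Theorem~\ref{locbilTS}) apply cleanly: your pointwise $\Omega_k$ is not a union of cubes, so passing from pointwise high-dominance to the $L^\infty$-over-cube bound that the bilinear estimate requires needs an extra appeal to the locally constant property — a fixable but non-trivial technical step that you should make explicit.
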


\vspace*{.2in}

\begin{lemma}\label{lem: lc}For any $p\geq 1$, 
	$\|f_{\theta}\|_{L^{\infty}(\mathbb{R}^2)} \lesssim \|f_{\theta}\|_{L^{p}(\mathbb{R}^2)}.$
\end{lemma}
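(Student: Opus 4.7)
The plan is to view this as a standard Bernstein-type reproducing estimate. Since $\widehat{f_\theta}$ is supported in the rectangle $\theta$ of dimensions $R^{-1/2}\times R^{-1}$, the function $f_\theta$ is essentially locally constant at the dual scale, and this heuristic yields the inequality for every $p\ge 1$.

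Concretely, the first step is to construct an auxiliary Schwartz function $\eta_\theta$ with $\widehat{\eta_\theta}\equiv 1$ on $\theta$ and supported in a fixed dilate, say $2\theta$. Fix once and for all a Schwartz bump $\phi$ on $\R^2$ with $\phi\equiv 1$ on $[-1,1]^2$ and $\operatorname{supp}\phi\subset[-2,2]^2$, and let $A_\theta$ be an affine map carrying $[-1,1]^2$ onto $\theta$. Setting $\widehat{\eta_\theta}(\xi)=\phi(A_\theta^{-1}\xi)$ produces the reproducing identity $f_\theta = f_\theta*\eta_\theta$, since $\widehat{\eta_\theta}\equiv 1$ on $\operatorname{supp}\widehat{f_\theta}$.

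The second step is Young's convolution inequality, which gives
\[\|f_\theta\|_{L^\infty(\R^2)}\le\|f_\theta\|_{L^p(\R^2)}\,\|\eta_\theta\|_{L^{p'}(\R^2)},\]
where $p'$ is the conjugate exponent. The third and final step is to bound $\|\eta_\theta\|_{L^{p'}}$: because $\eta_\theta$ is a modulated anisotropic rescaling of the fixed Schwartz function $\check\phi$, concentrated on the dual rectangle $\theta^*$ of dimensions $\sim R^{1/2}\times R$ with peak amplitude $\sim|\theta|=R^{-3/2}$, a direct change of variables yields
\[\|\eta_\theta\|_{L^{p'}(\R^2)}\lesssim R^{-3/2}\,|\theta^*|^{1/p'}=R^{-3/(2p)}\le 1\]
for $R\ge 2$, with an implicit constant depending only on $\phi$. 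Combining the three steps proves the lemma.

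I do not expect any real obstacle here: the statement is a textbook consequence of Bernstein-type reasoning, and the only item requiring care is tracking the anisotropic dilations on $\theta$ and its dual $\theta^*$, which is routine. In fact the same argument produces the quantitatively stronger estimate $\|f_\theta\|_{L^\infty(\R^2)}\lesssim R^{-3/(2p)}\|f_\theta\|_{L^p(\R^2)}$, which actually decays in $R$.
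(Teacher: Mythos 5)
Your proof is correct, and it is a cleaner variant of what the paper does. Both arguments hinge on the same reproducing identity $f_\theta = f_\theta * (\text{kernel adapted to }\theta)$, but the exponents in Young's inequality are used differently. You apply Young directly in the form $L^p * L^{p'} \to L^\infty$ on all of $\R^2$, which is the textbook route and, via the anisotropic change of variables, hands you the quantitatively stronger bound $\|f_\theta\|_{L^\infty}\lesssim R^{-3/(2p)}\|f_\theta\|_{L^p}$ for free (your exponent computation $\|\eta_\theta\|_{L^{p'}}\sim|\theta|^{1/p}\|\check\phi\|_{L^{p'}}\sim R^{-3/(2p)}$ is right). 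The paper instead builds a sup-over-unit-ball majorant $\psi_\theta(x)=\max_{y\in x+B(0,2)}|\check\phi_\theta(y)|$, passes to $L^p$ on a unit ball $B$, and applies Young in the form $L^p * L^1\to L^p$, concluding with $\|\psi_\theta\|_{L^1}\lesssim 1$. That route only produces the constant bound $\lesssim 1$ (no $R$-gain), but the ``majorize the kernel by its local sup'' device is the same locally-constant-on-dual-box machinery that the paper reuses elsewhere (e.g.\ in Definition \ref{aux} and the proof of Lemma \ref{hkbound}), so the authors likely state it this way to keep a uniform toolkit. Either proof suffices, since the lemma is only invoked with a constant factor; the extra decay you observe is genuine but not exploited in the paper.
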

\begin{proof}
	Since the Fourier transform of $f_{\theta}$ is supported on $2\theta$, we can choose a smooth  cutoff function $\phi_{\theta}$ such that $\phi_{\theta}=1$ on $2\theta$ and $\phi_{\theta}=0$ outside of $3\theta$. 
	Now define $\psi_{\theta}(x) = \max_{y\in x+ B(0,2)} |\check{\phi}_{\theta}|$, then for any unit ball $B$
	\[
	\|f_{\theta}\|_{L^{\infty}(B)} \leq \min_{x\in B} |f_{\theta}|* \psi_{\theta}(x)  \leq \||f_{\theta}|* \psi_{\theta}\|_{L^p(B)} \lesssim \|f_{\theta}\|_{L^p(\mathbb{R}^2)}. 
	\]
\end{proof}
\begin{note} \label{normalization}
	For the remainder of \textsection\ref{speccase}, assume that we have replaced $f$ by a constant multiple $cf$ so that $\underset{\theta}{\max}\|f_\theta\|_{L^\infty(\mathbb{R}^2)}= 1$. Note that this means that $\a$ is replaced by $c\a$ and $r$ is replaced by $c^2r$. The purpose of this assumption is to simplify the error terms which are often written as negative powers of $R$. Note for example that by Lemma~\ref{lem: lc}
	\[R^{-50} \ll \frac{r^2}{\a^2}\max_\theta\|f_\theta\|_{L^\infty(\mathbb{R}^2)}^2\lesssim \frac{r^2}{\a^2}\sum_\theta\|f_\theta\|_{L^2(\mathbb{R}^2)}^2 , \]
	because $r \ge 1$ and $\a \le R^{1/2}$. 
\end{note}

\subsection{Auxiliary functions}

There are two places described in \textsection\ref{intuition} that involve auxiliary bump functions, Definition~\ref{def: bumpetak} and \eqref{eq: babypsik}, which we analyze carefully in this section. To formally carry out the pruning process from $f$ to $f_k$, we define $\p_{\tilde{T}_{\tau_k}}$, and to define the high/low decomposition of $g_k$, define $\widecheck{\eta}_k$. We control the $L^1$ norms of these functions in Lemma \ref{etalem} and Lemma \ref{P}. This is important for achieving the $(\log R)^c$ upper bound in Proposition \ref{mainp}. 
\vspace*{2mm}

\begin{notation} Let $\delta=\frac{1}{\log R}$.
\end{notation}

\begin{definition} \label{eta}
	 Let $\xi\in\R$ and $G_0(\xi)=e^{-(\log R)^{1/2}\xi^2}$. Define the Gaussian-like function 
\[ G(\xi)=G_0(\xi)\chi_{[-1,1]\setminus[-\delta,\delta]}(\xi)+G_0(\delta)\chi_{[-\delta,\delta]}(\xi)-G_0(1)\chi_{[-1,1]}(\xi). \]
Define $\eta:\R^2\to\R$ by 
\[ \eta(\xi_1,\xi_2)=G(0)^{-2}G(\xi_1)G(\xi_2).\]

For each $k$, define 
\begin{equation}\label{etadef} \eta_k(\xi)=\eta(4R_{k+2}^{1/2}\xi). 
\end{equation}
Note that $\eta_k(\xi)=1$ for all $|\xi_i|\le \frac{1}{4\log R}R_{k+2}^{-1/2}$ and $\eta_k$ is supported in $|\xi_i|\le \frac{1}{4}R_{k+2}^{-1/2}$. 
\end{definition}

\vspace*{2mm}
\begin{lemma}\label{etalem} Let $R>0$ be larger than a certain absolute constant. Then for each $k$,
\[ \|\widecheck{\eta}_k\|_1\le 1+\frac{c}{\log R}. \]
\end{lemma}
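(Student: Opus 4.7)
The plan is to exploit the tensor product structure of $\eta$ together with the fact that $G$ is a small perturbation of the Gaussian $G_0$, whose inverse Fourier transform $\widecheck{G_0}$ is a positive Gaussian with $L^1$ norm exactly $1$.

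First, since $\eta_k(\xi)=\eta(4R_{k+2}^{1/2}\xi)$ is an affine dilation of $\eta$, a change of variables gives $\|\widecheck{\eta}_k\|_1=\|\widecheck{\eta}\|_1$, independent of $k$, and the tensor factorization $\eta(\xi_1,\xi_2)=G(0)^{-2}G(\xi_1)G(\xi_2)$ reduces the problem to the one-dimensional identity $\|\widecheck{\eta}\|_1=G(0)^{-2}\|\widecheck G\|_1^2$. Since $G(0)=G_0(\delta)-G_0(1)=1-O((\log R)^{-3/2})$, it suffices to establish the 1D bound
\[ \|\widecheck G\|_1 \le G(0) + O((\log R)^{-3/2}). \]

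For this, set $F:=G_0-G$, so that $\widecheck G=\widecheck{G_0}-\widecheck F$. Since $\widecheck{G_0}\ge 0$, the negative part satisfies
\[ \widecheck G_-(x)=\max\bigl(\widecheck F(x)-\widecheck{G_0}(x),\,0\bigr)\le |\widecheck F(x)|, \]
and Fourier inversion (justified by the $1/x^2$ decay of $\widecheck G$ established below) gives $\int\widecheck G\,dx=G(0)$. Therefore
\[ \|\widecheck G\|_1 = \int\widecheck G\,dx + 2\int\widecheck G_-\,dx \le G(0)+2\|\widecheck F\|_1, \]
and the problem reduces to showing $\|\widecheck F\|_1=O((\log R)^{-3/2})$.

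To bound $\|\widecheck F\|_1$, I would interpolate between two pointwise bounds on $\widecheck F$. From the explicit form of $F$ --- equal to $G_0$ on $|\xi|>1$, equal to $G_0(1)$ on $\delta<|\xi|\le 1$, and equal to $G_0(\xi)-G_0(\delta)+G_0(1)$ on $|\xi|\le \delta$ --- a direct computation using $G_0(\xi)-G_0(\delta)\approx a(\delta^2-\xi^2)$ on $|\xi|\le \delta$, where $a=(\log R)^{1/2}$, yields $\|F\|_1\lesssim a\delta^3=(\log R)^{-5/2}$ (the contributions from $|\xi|>\delta$ are exponentially small in $\sqrt{\log R}$). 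Two integrations by parts give $|\widecheck F(x)|\le \|F''\|_{TV}/(2\pi x)^2$, where the total variation of $F''$ includes the four delta-function contributions at the jumps of $F'$ at $\pm\delta$ and $\pm 1$. Both the regular part $\int_{|\xi|<\delta}|G_0''|\,d\xi$ and the $\pm\delta$ jumps $|G_0'(\pm\delta)|\sim a\delta$ contribute $O(a\delta)=O((\log R)^{-1/2})$ to $\|F''\|_{TV}$, while the $\pm 1$ contributions are negligible. Splitting $\int|\widecheck F|\,dx$ at the scale $|x|\sim \log R$ using $|\widecheck F|\le \|F\|_1$ on $|x|\le \log R$ and $|\widecheck F|\le \|F''\|_{TV}/(2\pi x)^2$ on $|x|>\log R$ balances the two contributions and produces $\|\widecheck F\|_1\lesssim (\log R)^{-3/2}$, finishing the proof.

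The main technical delicacy is the total-variation calculation of $F''$, where the regular and delta-function contributions at $\pm\delta$ must both be carefully tracked and shown to be of size $a\delta$. The specific tunings $\delta=1/\log R$ and $a=(\log R)^{1/2}$ in the definition of $\eta$ are calibrated precisely so that the two pointwise bounds on $\widecheck F$ balance at $|x|\sim \log R$, producing the $(\log R)^{-3/2}$ gain that comfortably implies the claimed $1+c/\log R$ bound.
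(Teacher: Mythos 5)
Your proof is correct, and the overall strategy (peel off the Gaussian $G_0$, whose inverse transform has $L^1$ norm exactly $1$, and show the remainder $F=G_0-G$ has $\|\widecheck F\|_1$ small) is the same as the paper's up to the point where the remainder is estimated. The difference is in how you bound $\|\widecheck F\|_1$. The paper splits $F$ into two pieces $G_1$ (supported in $[-\delta,\delta]$) and $G_2$ (supported outside $[-\delta,\delta]$) and for each uses Cauchy--Schwarz against $(1+x^2)^{-1/2}\in L^2(\RR)$, reducing the $L^1$ estimate to the $L^2$ Sobolev-type bound $\|\widecheck{G_i}\|_1\lesssim\|G_i\|_2+\|G_i'\|_2$; this yields $\|\widecheck F\|_1\lesssim(\log R)^{-1}$. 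You instead interpolate between the two pointwise bounds $|\widecheck F(x)|\le\|F\|_1$ and $|\widecheck F(x)|\le\|F''\|_{TV}/(2\pi x)^2$, splitting the $L^1$ integral at $|x|\sim\log R$; with $\|F\|_1\lesssim a\delta^3=(\log R)^{-5/2}$ and $\|F''\|_{TV}\lesssim a\delta=(\log R)^{-1/2}$ this gives the slightly sharper $\|\widecheck F\|_1\lesssim(\log R)^{-3/2}$. The extra half power is not needed for the stated lemma (both comfortably give $1+c/\log R$), but your $TV$-based argument is more elementary in that it avoids Plancherel. One small side remark: the refinement $\|\widecheck G\|_1\le G(0)+2\|\widecheck F\|_1$ using the positivity of $\widecheck{G_0}$ and Fourier inversion at $\xi=0$ is correct but more than you need --- the plain triangle inequality $\|\widecheck G\|_1\le\|\widecheck{G_0}\|_1+\|\widecheck F\|_1=1+\|\widecheck F\|_1$ already suffices, since the factor $G(0)^{-2}=1+O((\log R)^{-3/2})$ from the normalization is absorbed anyway.
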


\begin{proof} It suffices to show the claim for $\eta$ since $\eta_k$ is equal to $\eta$ composed with an affine transformation. Recall the definition of $\eta$
\[ \eta(\xi_1,\xi_2)=G(0)^{-2}G(\xi_1)G(\xi_2). \]
Since $G(0)^{-1}=(e^{\delta^{3/2}}-e^{-\delta^{-1/2}})^{-1}\le (1+\frac{1}{\log R})$ and Fourier transforms of products factor, it suffices to show that
\[ \|\widecheck{G}\|_1\le 1+\frac{c}{\log R}. \]
Recall the definition of $G$: $G_0(\xi)=e^{-(\log R)^{1/2}\xi^2}$ and 
\[ G(\xi)=G_0(\xi)\chi_{[-1,1]\setminus[-\delta,\delta]}(\xi)+G_0(\delta)\chi_{[-\delta,\delta]}(\xi)-G_0(1)\chi_{[-1,1]}(\xi). \]
Also define the functions 
\[ G_1(\xi)=(G_0(\xi)-G_0(\delta))\chi_{[-\delta,\delta]}(\xi),\qquad G_2(\xi)=G_0(\xi)\chi_{[-1,1]^c}(\xi)+G_0(1)\chi_{[-1,1]}(\xi)   \]
and note that
\[ G(\xi)= G_0(\xi)-G_1(\xi)-G_2(\xi).   \]
Since $\widecheck{G_0}\ge 0$, $\|\widecheck{G_0}\|_1=\|G_0\|_\infty=1$, which means it suffices to show that
\[ \|\widecheck{G_1}\|_1\lesssim \frac{1}{\log R}\]
and
\[ \|\widecheck{G_2}\|_1\lesssim \frac{1}{\log R}. \]
Observe that $G_1$ and $G_2$ are continuous, $L^1$ functions (though not differentiable at a few points). $G_1$ is Riemann-integrable, so for each $x\not=0$, we can use integration by parts to compute $x\widecheck{G}_1(x)$ as the inverse Fourier transform of a function. For $G_2$, the same is true after a limiting argument to approximate $G_2$ by Riemann-integrable functions. 

First, we have
\begin{align*}
    \|\widecheck{G_1}\|_1&=\|(1+x^2)^{-1/2}(1+x^2)^{1/2}\widecheck{G_1}\|_1\\
    &\lesssim \|(1+x^2)^{1/2}\widecheck{G_1}\|_2\\
    &\lesssim \|G_1\|_2+\|x\widecheck{G_1}\|_2\\
    &\lesssim (\delta|1-G_0(\delta)|^2)^{1/2}+(\log R)^{1/2}\left(\int_{[-\delta,\delta]}|\xi e^{-(\log R)^{1/2}\xi^2}|^2d\xi\right)^{1/2}\\
    &\lesssim \delta^{1/2}(\log R)^{1/2}\delta^2+(\log R)^{1/2}\delta^{3/2}\sim \frac{1}{\log R}.
\end{align*}
Similarly, 
\begin{align*}
    \|\widecheck{G_2}\|_1 &\lesssim \|G_2\|_2+\|x\widecheck{G_2}\|_2\\
    &\lesssim G_0(1)+(\log R)^{1/2}\left(\int_{[-1,1]^c}|\xi e^{-(\log R)^{1/2}\xi^2}|^2d\xi\right)^{1/2}\\
    &\lesssim e^{-(\log R)^{1/2}}+(\log R)^{1/8}\big(\int_{[-\delta^{-1/4},\delta^{-1/4}]^c}e^{-\xi^2/2}d\xi\big)^{1/2}\ll \frac{1}{\log R}. 
\end{align*}

\end{proof}

\vspace*{2mm}

\begin{definition}\label{rho} Let $\rho:\R^2\to\R$ be equal $\rho(\xi)=\eta(\delta \xi)$ where $\eta$ is defined in Definition \ref{eta} and $\delta = \frac{1}{\log R}$. For each scale $R_k$ and each $\sim R_k^{-1/2}\times R_k^{-1}$-rectangle $\tau_k$, let 
\[ \rho_{\tau_k}=\rho\circ\ell_{\tau_k}\]
where $\ell_{\tau_k}$ is an affine transformation mapping the smallest ellipse containing $2\tau_k$ to $\B$. 
\end{definition}


\begin{definition}\label{def: dual convex}
	If $\tau$ is a symmetric convex set with center $C(\tau)$, then  the dual of $\tau$ is defined as
	\[
	\tau^* =\{x: |x\cdot (y-C(\tau))|\leq 1, \, \, \forall y\in \tau\}.
	\]
\end{definition}

\begin{definition} \label{aux} For each $R_1^{-1/2}\times R_1^{-1}$-rectangle $\tau_1$, let 
\[ \p_{\tilde{T}_{\tau_1}}(x)=\sup_{y\in x+(\log R)^{11} \tau_1^*}|\widecheck{\rho}_{\tau_1}(y)|.  \]
For each $2\le k\le N$ and each $R_k^{-1/2}\times R_k^{-1}$-rectangle $\tau_k$, define $\p_{\tilde{T}_{\tau_k}}$ inductively by 
\[ \p_{\tilde{T}_{\tau_k}}(x)=\max(\sup_{y\in x+(\log R)^{11}\tau_k^* }|\widecheck{\rho}_{\tau_k}(y)|,\p_{\tilde{T}_{\tau_{k-1}}}*|\widecheck{\eta}_{k-1}|(x)),\]
where $\tau_{k-1}$ is the $R_{k-1}^{-1/2}\times R_{k-1}^{-1}$-rectangle containing $\tau_k$.
\end{definition}

\vspace*{2mm}

\begin{lemma}\label{P} For each $k$ and $\tau_k$,
\[ \|\p_{\tilde{T}_{\tau_k}}\|_1\lesssim (\log R)^{\tilde{c}}. \]

The implicit constant is uniform in $k$ and $\tau_k$.
\end{lemma}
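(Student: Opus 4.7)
The plan is to induct on $k$, with the essential observation being that Lemma~\ref{etalem} controls the multiplicative cost of each convolution in the recursion to within a factor of $(1 + c/\log R)$, so even $N \sim \log R/\log\log R$ iterations contribute only an $O(1)$ multiplicative constant. The only other source of loss is the ``maximal'' piece $M_k(x) := \sup_{y\in x + (\log R)^{11}\tau_k^*}|\widecheck{\rho}_{\tau_k}(y)|$, which adds (not multiplies) at each scale, so it suffices to bound $\|M_k\|_1$ uniformly by some power of $\log R$.

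For the maximal piece, I would observe that $\rho_{\tau_k} = \rho\circ\ell_{\tau_k}$ is supported on (roughly) $(\log R)\cdot \tau_k$ with $\|\rho_{\tau_k}\|_\infty \lesssim 1$ (since $\rho(\xi) = \eta(\delta\xi)$ and $\eta$ is a normalized bump). By an affine change of variables and the Schwartz character of $\eta$, one obtains the pointwise envelope
\[ |\widecheck{\rho}_{\tau_k}(y)| \lesssim_N (\log R)^2|\tau_k|\bigl(1 + |y|_{(\log R)^{-1}\tau_k^*}\bigr)^{-N}, \]
where $|y|_A := \inf\{t\ge 0 : y\in tA\}$. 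Taking the supremum over $y\in x+(\log R)^{11}\tau_k^*$ gives the trivial bound $(\log R)^2|\tau_k|$ on the set $\{|x|_{\tau_k^*}\le (\log R)^{11}\}$ (of volume $\sim (\log R)^{22}|\tau_k^*|$) together with Schwartz-rapid decay beyond; since $|\tau_k|\,|\tau_k^*| \sim 1$, I get $\|M_k\|_1\lesssim (\log R)^{24}$ uniformly in $k$ and in $\tau_k$.

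For the induction, apply the trivial $\max(a,b)\le a+b$ to the recursive definition together with Young's inequality and Lemma~\ref{etalem}:
\[ \|\p_{\tilde{T}_{\tau_k}}\|_1 \le \|M_k\|_1 + \|\p_{\tilde{T}_{\tau_{k-1}}}\|_1\cdot\|\widecheck{\eta}_{k-1}\|_1 \le C(\log R)^{24} + \bigl(1 + c/\log R\bigr)\|\p_{\tilde{T}_{\tau_{k-1}}}\|_1. \]
Unrolling $k\le N$ times, the geometric factor is $(1+c/\log R)^N \le \exp(cN/\log R) = \exp\!\bigl(c/(12\log\log R)\bigr) = O(1)$, so
\[ \|\p_{\tilde{T}_{\tau_k}}\|_1 \le C(\log R)^{24}\sum_{j=0}^{k-1}(1+c/\log R)^j \lesssim N(\log R)^{24} \lesssim (\log R)^{25}, \]
which is the claim with $\tilde c = 25$ (any explicit exponent works). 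The main obstacle is the uniform bound on $\|M_k\|_1$, which requires the explicit Schwartz-envelope in $\tau_k^*$-coordinates; the induction itself is robust precisely because the sharp version of Lemma~\ref{etalem} (with constant $1+c/\log R$ rather than a generic $O(1)$ constant) prevents the recursion from blowing up as $R\to\infty$.
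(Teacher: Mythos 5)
Your proof is correct and takes essentially the same approach as the paper: both rely on the sharp bound $\|\widecheck{\eta}_{k-1}\|_1\le 1+c/\log R$ from Lemma~\ref{etalem} together with a uniform $(\log R)^{O(1)}$ bound on the maximal piece $M_k$, iterated over the $\lesssim N$ scales. The only organizational difference is that you replace the pointwise $\max$ in Definition~\ref{aux} by a sum, yielding the linear recursion $a_k\le\|M_k\|_1+(1+c/\log R)a_{k-1}$, whereas the paper splits into two cases according to whether $M_k$ contributes at least a $\delta$-fraction of the $L^1$ norm; your route costs an extra factor of $N\lesssim\log R$ from summing, which is harmless. One small caution: $\eta$ is \emph{not} Schwartz (it is compactly supported but only piecewise smooth, with derivative jumps at $|\xi_i|=\delta,1$), so $\widecheck{G}$ decays only like $|x|^{-2}$, not rapidly; a radial $|y|^{-2}$ envelope would fail to be integrable in $\R^2$, so one should use the tensor structure $|\widecheck{\eta}(y_1,y_2)|\lesssim(1+|y_1|)^{-2}(1+|y_2|)^{-2}$ to conclude $\|M_k\|_1\lesssim(\log R)^{O(1)}$ --- a fixable point that the paper's own phrasing (``rapidly decaying'') also glosses over.
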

\begin{proof} If $k=1$, then $\|\p_{\tilde{T}_{\tau_1}}\|_1\lesssim (\log R)^{\tilde{c}}$ is clear from the definition. Recall the definition of $\p_{\tilde{T}_{\tau_k}}$ for $k\ge 2$ to be
\[ \p_{\tilde{T}_{\tau_k}}(x)=\max(\sup_{y\in x+(\log R)^{11} \tau_k^* }|\widecheck{\rho}_{\tau_k}(y)|,\p_{\tilde{T}_{\tau_{k-1}}}*|\widecheck{\eta}_{k-1}|(x)),\]
where $\tau_{k-1}$ is the $R_{k-1}^{-1/2}\times R_{k-1}^{-1}$-rectangle containing $\tau_k$. Let $A_k$ be the set on which 
\[ \sup_{y\in x+(\log R)^{11}\tau_k^*}|\widecheck{\rho}_{\tau_k}(y)|\ge \p_{\tilde{T}_{\tau_{k-1}}}*|\widecheck{\eta}_{k-1}|(x)\]
and let $B_k$ be the complement of $A_k$. Note that
\[ \|\p_{\tilde{T}_{\tau_k}}\|_1=\|\sup_{y\in \cdot +(\log R)^{11} \tau_k^* }|\widecheck{\rho}_{\tau_k}(y)|\|_{L^1(A_k)}+ \|\p_{\tilde{T}_{\tau_{k-1}}}*|\widecheck{\eta}_{k-1}|\|_{L^1(B_k)}\]
If 
\[ \|\sup_{y\in \cdot +(\log R)^{11}\tau_k^*}|\widecheck{\rho}_{\tau_k}(y)|\|_{L^1(A_k)}\ge \delta  \|\p_{\tilde{T}_{\tau_{k-1}}}*|\widecheck{\eta}_{k-1}|\|_{L^1(B_k)}, \]
then 
\begin{align*}
 \|\p_{\tilde{T}_{\tau_k}}\|_1 & \le 2 \delta^{-1}\|\sup_{y\in \cdot +(\log R)^{11}\tau_k^*}|\widecheck{\rho}_{\tau_k}(y)| \|_{L^1(\RR^2)} \\
& = 2 \delta^{-1} \| \sup_{y \in \cdot + B(0,  (\log R )^{11} )} | \widecheck{\rho}(y)| \|_{L^1} \lesssim (\log R)^c 
\end{align*}
\noindent since $|\widecheck{\rho}|$ is bounded by $(\log R)^c$ and is rapidly decaying outside a ball of radius $(\log R)^c$.  

If not, then
\[ \|\p_{\tilde{T}_{\tau_k}}\|_1\le (1+\delta)\|\p_{\tilde{T}_{\tau_{k-1}}}*|\widecheck{\eta}_{k-1}|\|_{L^1(B_k)}. \]
By Young's convolution inequality and Lemma \ref{etalem}, 
\[ \|\p_{\tilde{T}_{\tau_k}}\|_1\le (1+\delta)^2\|\p_{\tilde{T}_{\tau_{k-1}}}\|_{1}  .\]

If we iterate this argument for $\leq N \lesssim \frac{\log R}{\log \log R}$ times, then we also obtain the desired conclusion. 

\end{proof}

\begin{definition}[\label{Gaussian}Gaussian partition of unity] First define $\s_0$ by
\[ \s_0(x)=c\int_{Q}g(x-y)dy\]
where $Q$ is the unit cube $Q=[-1/2,1/2]^2$, $g(x)=e^{-|x|^2}$, and $c=(\int g)^{-1}$. Note that for every $x$
\[ \sum_{n\in\Z^2}\s_0(x-n)=\sum_{n\in\Z^2}\int_{n+Q} g(x-y)dy=1. \]

Let $T$ be any rectangle in $\R^2$. Let $A:\R^2\to\R^2$ be an affine transformation mapping $T$ to $Q$. Define the Gaussian bump function adapted to $T$ $\s_T$ by
\[  \s_T(x)=c|T|^{-1}\int_Tg(A(x-y))dy. \]
(There are several different affine transformations $A$ taking $T$ to $Q$, but they all give the same function $\s_T$ because the Gaussian $g$ and the area form are invariant under the affine automorphisms of $Q$.)

If $\T$ is a set of congruent rectangles $T$ tiling the plane, then 
\[ 
\sum_{T \in \T} \s_T(x) = 1,
 \]
and so the Gaussians $\{ \s_T \}_{T \in \T}$ form a partition of unity.  

The Fourier transform of $\s_T$ is
\[ \widehat{\s_T}(\xi)=c|T|^{-1}\int_T |A|^{-1}\widehat{g}((A^{-1})^t\xi)e^{-2\pi i \xi\cdot A^{-1} y}dy   \]
and satisfies
\[ |\widehat{\s_T}(\xi)|\le |T|e^{-|(A^{-1})^t\xi|^2}. \]
If $x\not\in100\sqrt{\log R}T$, then 
\[ |\s_T(x)|\le ce^{-\underset{y\in T}{\inf}|x-y|^2} \le R^{-1000}. \]
If $\xi\not\in 100\sqrt{\log R}T^*$, then
\[ |\widehat{\s_T}(\xi)|\le c|T|R^{-10^4}. \]

\end{definition}

	In the rest of the paper, a function being \emph{essentially supported} in $S$ means  that $|\cdot|\le R^{-1000}$ off of $S$ and the function rapidly decays away from $S$.  
A weight function $w_S$ is localized to $S$ means that $w_S\sim 1$ on $S$ and $|w_S|\leq R^{-1000}$ off of $\log R \, S$. 

\subsection{Pruning wave packets \label{lemmas} } 

We define pruned versions of the function $f$ and the intermediate square functions $\underset{\tau_k}{\sum}|f_{\tau_k}|^2$. The pruning process depends on the parameter $\a>0$ which measures the bilinearized version of $f$, and a new parameter $r>0$ (defined below) related to the final square function $\underset{\theta}{\sum}|f_\theta|^2$.

\begin{definition} Define $g_N$ as
\[ g_N=\sum_\theta|f_\theta|^2*\p_{\tilde{T}_{\theta}}. \]
where $\p_{\tilde{T}_{\theta}}$ is defined in Definition \ref{aux} for $\theta=\tau_N$. \end{definition}

\begin{definition} Define $r>0$ to be
\begin{equation}\label{eq: r}
r = \|g_N\|_{L^\infty(\mathbb{R}^2)}. \end{equation}
Note that $r$ satisfies 
\begin{equation} 
r\lesssim (\log R)^{\tilde{c}}\sum_\theta\|f_\theta\|_{L^\infty(\mathbb{R}^2)}^2  \end{equation}
for the constant $\tilde{c}$ in Lemma~\ref{P}.
\end{definition}

\begin{notation} \label{deflambda} The parameter $\lambda$ measures the ratio between $r$ and $\a$:
\begin{equation} \lambda=(\log R)^m \frac{r}{\a}\end{equation}
where the exponent $m$ is sufficiently large as required by the proof of Lemma \ref{hkbound} and Lemma \ref{ftofk} and Proposition \ref{mainp}. 
\end{notation}

\vspace*{.2in} 

\begin{definition} \label{defTtauk} For each rectangle $\tau_k$, we write $T_{\tau_k}$ to denote a translate of $(\log R)^9 \tau_k^*$.  
We let $\T_{\tau_k}$ be a tiling of the plane by rectangles $T_{\tau_k}$.  
In Definition \ref{Gaussian}, we defined a Gaussian partition of unity associated to such a tiling:

$$ \sum_{T_{\tau_k} \in \T_{\tau_k}} \s_{T_{\tau_k}}(x) = 1. $$

\end{definition}

\begin{definition}[Defining $f_k$ with respect to $\lambda$] \label{fk} Define $f_N$ by
\[ f_N=\sum_\theta\sum_{\substack{T_\theta \in\T_\theta\\\|\s_{T_\theta}f_\theta\|_\infty\le\lambda}}\s_{T_\theta}f_\theta. \]

Let $\T_{\theta,\lambda}=\{T_\theta \in\T_\theta:\|\s_{T_\theta}f_\theta\|_\infty\le\lambda\}$. Define 
\[ f_{N,\theta}=\sum_{T_\theta \in\T_{\theta,\lambda}}\s_{T_\theta}f_\theta\]
and note that $\widehat{f_{N,\theta}}$ is essentially supported  in $(1+(\log R)^{-8}) \theta$. Also define
\[ f_{N,\tau_{N-1}}=\sum_{\theta\subset \tau_{N-1}}f_{N,\theta} .\]

Now we define $f_k,f_{k,\tau_k},$ and $f_{k,\tau_{k-1}}$, starting with $k=N-1$ and going down to $k=1$.

\[ f_k =\sum_{\tau_k}\sum_{T_{\tau_k}\in\T_{\tau_k,\lambda}}\s_{T_{\tau_k}}f_{k+1,\tau_k} \]

\noindent where $\T_{\tau_k,\lambda}=\{T_{\tau_k}\in\T_{\tau_k}:\|\s_{T_{\tau_k}}f_{k+1,\tau_k}\|_\infty\le\lambda\}$. 

Similarly we define 

\begin{equation} \label{eq: fktauk}
f_{k,\tau_k} =\sum_{T_{\tau_k}\in\T_{\tau_k,\lambda}}\s_{T_{\tau_k}}f_{k+1,\tau_k}
\end{equation}
\begin{equation}\label{eq: fktauk1}
f_{k,\tau_{k-1}} = \sum_{\tau_k\subset\tau_{k-1}} f_{k,\tau_{k}}.
\end{equation} 

\end{definition}

\begin{lemma}[Properties of $f_k$]\label{propfk} 
\begin{enumerate} 
\item $f_k=\underset{\tau_k}{\sum}f_{k,\tau_k}.$
\item $ | f_{k, \tau_{k}} (x) | \le |f_{k+1, \tau_{k}}(x)|. $
\item $ \| f_{k, \tau_k} \|_{L^\infty} \le C (\log R)^2 \lambda + R^{-1000}$.
\item\label{item3} $ \text{ess supp} \widehat{f_{k,\tau_k}}\subset (1+(\log R)^{-8})\cdot\tau_k . $
\item \label{item4} $  \text{ess supp} \widehat{f_{k,\tau_{k-1}}}\subset (1+(\log R)^{-10})\tau_{k-1}. $

\end{enumerate}

\end{lemma}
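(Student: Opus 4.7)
The plan is to verify each of the five properties in order. Properties (1)--(3) are essentially formal consequences of the definitions and the Gaussian partition of unity; properties (4) and (5) follow from a coupled downward induction on $k$ starting from the base case $k=N$.

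For (1), summing \eqref{eq: fktauk} over $\tau_k$ immediately recovers the definition of $f_k$. For (2), since $f_{k+1,\tau_k}(x)$ factors out of \eqref{eq: fktauk} and $\{\s_{T_{\tau_k}}\}_{T_{\tau_k}\in\T_{\tau_k}}$ is a nonnegative partition of unity (Definition \ref{Gaussian}),
\[ |f_{k,\tau_k}(x)|=|f_{k+1,\tau_k}(x)|\sum_{T_{\tau_k}\in\T_{\tau_k,\lambda}}\s_{T_{\tau_k}}(x)\le|f_{k+1,\tau_k}(x)|\sum_{T_{\tau_k}\in\T_{\tau_k}}\s_{T_{\tau_k}}(x)=|f_{k+1,\tau_k}(x)|. \]
For (3), fix $x$. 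By the tile-selection criterion defining $\T_{\tau_k,\lambda}$, each term in \eqref{eq: fktauk} satisfies $|\s_{T_{\tau_k}}(x)f_{k+1,\tau_k}(x)|\le\lambda$. By Definition \ref{Gaussian}, $\s_{T_{\tau_k}}(x)\le R^{-1000}$ unless $x\in 100\sqrt{\log R}\,T_{\tau_k}$, and in a congruent tiling the number of rectangles whose $100\sqrt{\log R}$-dilate contains $x$ is $\lesssim\log R$. This gives a contribution $\lesssim(\log R)\lambda$ from the ``close'' tiles. The ``far'' tiles have Gaussian decay in the tile-distance from $x$; summing this tail against a crude polynomial bound $\|f_{k+1,\tau_k}\|_\infty\le R^{O(1)}$ (established inductively starting from the normalization $\|f_\theta\|_\infty\le 1$ in Note \ref{normalization}) gives $R^{-1000}$, proving (3) with room to spare.

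For (4) and (5), the base case is at scale $N$. By Definition \ref{defTtauk}, $T_\theta$ is a translate of $(\log R)^9\theta^*$, so $\widehat{\s_{T_\theta}}$ is essentially supported in $100\sqrt{\log R}\,T_\theta^*\subset 100(\log R)^{-17/2}\theta$ about the origin. Since $\widehat{f_\theta}$ is supported in $\theta$, the convolution $\widehat{\s_{T_\theta}}*\widehat{f_\theta}$ is essentially supported in the corresponding Minkowski sum, contained in $(1+(\log R)^{-8})\theta$ for $R$ large, yielding (4) at $k=N$ (and matching the note in Definition \ref{fk}). For the inductive step, assuming (5) at scale $k+1$ (so $\widehat{f_{k+1,\tau_k}}$ is essentially supported in $(1+(\log R)^{-10})\tau_k$), the same convolution argument — now using that $\widehat{\s_{T_{\tau_k}}}$ is essentially supported in $100(\log R)^{-17/2}\tau_k$ about the origin — gives (4) at scale $k$, since $(1+(\log R)^{-10})+100(\log R)^{-17/2}\le 1+(\log R)^{-8}$ for large $R$. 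Finally (5) at scale $k$ follows from (4) by summing \eqref{eq: fktauk1} over the $(\log R)^6$ children $\tau_k\subset\tau_{k-1}$: each $(1+(\log R)^{-8})\tau_k$ sticks out of $\tau_{k-1}$ by at most $(\log R)^{-8}R_k^{-1/2}=(\log R)^{-4}R_{k-1}^{-1/2}$ (using $R_k/R_{k-1}=(\log R)^{12}$) in the long direction, and analogously in the short direction, which is absorbed into $(1+(\log R)^{-10})\tau_{k-1}$.

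The main bookkeeping difficulty is making sure the numerical budgets line up uniformly at every one of the $N\sim\log R/\log\log R$ scales: the generous choice $(\log R)^9$ in Definition \ref{defTtauk} produces the Gaussian-cutoff slack $100(\log R)^{-17/2}$, the geometric ratio $(\log R)^{12}$ between consecutive scales produces the slack $(\log R)^{-4}$ for passing from (4) to (5), and both must fit under the allotted $(\log R)^{-8}$ and $(\log R)^{-10}$ — which they do, but only by a slim margin that must be checked once and then holds uniformly in $k$.
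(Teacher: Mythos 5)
Your proof follows the same route as the paper's: (1) and (2) are immediate from \eqref{eq: fktauk} and the fact that the Gaussians form a nonnegative partition of unity; (3) splits the tiles into a bounded-cardinality ``near'' family, each term capped at $\lambda$ by the selection rule, plus a rapidly decaying ``far'' tail bounded against the crude polynomial estimate; and (4)--(5) proceed by the same coupled two-step downward induction on $k$, comparing the essential Fourier support $100\sqrt{\log R}\,(\log R)^{-9}\tau_k$ of $\s_{T_{\tau_k}}$ against the inductive support of $f_{k+1,\tau_k}$.

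There is, however, an arithmetic slip in the passage from (4) to (5) that, as written, would break the argument. From $R_k/R_{k-1}=(\log R)^{12}$ one gets $R_k^{-1/2}=(\log R)^{-6}R_{k-1}^{-1/2}$, so the long-direction overhang of $(1+(\log R)^{-8})\tau_k$ relative to $\tau_{k-1}$ is
\[
(\log R)^{-8}R_k^{-1/2}=(\log R)^{-14}R_{k-1}^{-1/2},
\]
not $(\log R)^{-4}R_{k-1}^{-1/2}$ as you wrote. This is not a cosmetic issue: your stated exponent $(\log R)^{-4}$ does \emph{not} fit inside the $(\log R)^{-10}$ budget, so the inclusion into $(1+(\log R)^{-10})\tau_{k-1}$ would fail as you have it. It is only the corrected exponent $(\log R)^{-14}$ (and the even smaller $(\log R)^{-20}$ in the short direction) that makes the inclusion go through. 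Accordingly, your closing remark that the budgets line up ``only by a slim margin'' is also off: the margins here are quite comfortable, and the only genuinely tight budget in the lemma is the $\sqrt{\log R}$-versus-$(\log R)^9$ interplay you already identified at the start of (4). With the exponent corrected, the proof is sound and matches the paper.
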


\begin{proof}
The first property follows straight from the definition.

The second property follows because $\sum_{T_{\tau_k} \in \T_{\tau_k}}  \s_{T_{\tau_k}}$ is a partition of unity, and

$$ f_{k,\tau_k}=\sum_{T_{\tau_k}\in\T_{\tau_k,\lambda} \subset \T_{\tau_k}} \s_{T_{\tau_k}}f_{k+1,\tau_k}. $$

Now consider the $L^\infty$ bound in the third property.  We write

$$ f_{k, \tau_k}(x) = \sum_{T_{\tau_k} \in \T_{\tau_k, \lambda}, x \in (\log R) T_{\tau_k}} \s_{T_{\tau_k}} f_{k+1, \tau_k} + \sum_{T_{\tau_k} \in \T_{\tau_k, \lambda}, x \notin (\log R) T_{\tau_k}} \s_{T_{\tau_k}} f_{k+1, \tau_k}. $$

\noindent The first sum has at most $C (\log R)^2$ terms, and each term has norm bounded by $\lambda$ by the definition of $\T_{\tau_k, \lambda}$.  By the normalization in Note \ref{normalization}, it follows easily that 
\begin{equation}\label{eq: trivialbd}
|f_{k+1, \tau_k}(x)| \le R^{100}.
\end{equation}
 But if $x \notin (\log R) T_{\tau_k}$, then $\s_{T_{\tau_k}}(x) \le e^{- (\log R)^2} \le R^{-2000}$.  Moreover, as $T_{\tau_k}$ gets further away from $x$, $\s_{T_{\tau_k}}(x)$ is rapidly decaying.  Therefore, the second sum has norm at most $R^{-1000}$.

The fourth and fifth properties depend on the essential Fourier support of $\s_{T_{\tau_k}}$ (and on similar trivial bounds as (\ref{eq: trivialbd})).  Recall from Definition \ref{defTtauk} that $T_{\tau_k}$ is a translate of $(\log R)^9 \tau_k^*$.  Because of this factor $(\log R)^9$, the essential Fourier support of $\s_{T_{\tau_k}}$ is contained in \newline $100\sqrt{\log R}(\log R)^{-9} \tau_k$ (see Definition~\ref{Gaussian}).  

Initiate a 2-step induction with base case $k=N$: $f_{N,\theta}$ has essential Fourier support in $(1+(\log R)^{-8})\theta$ because of the above definition. Then 
\[ f_{N,\tau_{N-1}}=\sum_{\theta\subset\tau_{N-1}}f_{N,\theta}\]
has essential Fourier support in $\underset{\theta\subset\tau_{N-1}}{\cup}(1+(\log R)^{-8})\theta$, which is contained in $(1+(\log R)^{-10})\tau_{N-1}$. Since each $\s_{T_{\tau_{N-1}}}$ has essential Fourier support in $100\sqrt{\log R}(\log R)^{-9}\tau_{N-1}$, 
\[ f_{N-1,\tau_{N-1}}=\sum_{T_{\tau_{N-1}}\in\T_{\tau_{N-1},\lambda}}\s_{\tau_{N-1}}f_{N,\tau_{N-1}}\]
has essential Fourier support in 
\[ (100\sqrt{\log R}(\log R)^{-9}+1+(\log R)^{-10})\tau_{N-1}\subset(1+(\log R)^{-8})\tau_{N-1}. \]
Iterating this reasoning until $k=1$ gives \eqref{item3} and \eqref{item4}.

\end{proof}

\begin{definition}[Definition of $g_k$] \label{def: gk}
	For $k=1,\ldots,N-1$, define
\[ g_k:=\sum_{\tau_k}|f_{k+1,\tau_k}|^2*\p_{\tilde{T}_{\tau_k}} \]
where $\p_{\tilde{T}_{\tau_k}}$ is specified in Definition \ref{aux}. \end{definition}

The following lemma shows that the difference between the $k$th and $(k+1)$st versions of $f_{\tau}$ is controlled by $\lambda^{-1}g_k$. We eventually apply this lemma for $x\in\Omega_k$, defined in Definition \ref{omega}, where we know that $g_k\sim r$. We will see that on this set, the differences between the different versions of $f_{\tau}$ are negligible.
\vspace*{.2in} 

\begin{lemma}\label{hkbound} Suppose $\tau$ is a $\rho^{-1/2} \times \rho^{-1}$ rectangle in a $\rho^{-1}$-neighborhood of $\mb{P}^1$, at any scale $1 \le \rho \le R$.  For $k=1,\ldots,N-1$, if $R_k \ge \rho$, 
\[ |\sum_{\tau_k\subset\tau}f_{k+1,\tau_k}(x)- \sum_{\tau_k \subset \tau} f_{k,\tau_k}(x)|\lesssim (\log R)^{\tilde{c}}\lambda^{-1}g_k(x)+R^{-1000}. \]
\end{lemma}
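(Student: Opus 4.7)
Start from Definition \ref{fk}: since $\{\s_{T_{\tau_k}}\}_{T_{\tau_k}\in\T_{\tau_k}}$ is a partition of unity (Definition \ref{Gaussian}) while $f_{k,\tau_k}$ retains only the ``light'' wave packets with $\|\s_{T_{\tau_k}} f_{k+1,\tau_k}\|_\infty \le \lambda$, the difference telescopes to a sum over heavy wave packets:
\[ f_{k+1,\tau_k}(x) - f_{k,\tau_k}(x) \;=\; \sum_{T_{\tau_k}\in\T_{\tau_k}\setminus\T_{\tau_k,\lambda}} \s_{T_{\tau_k}}(x)\, f_{k+1,\tau_k}(x). \]
Summing over $\tau_k\subset\tau$ reduces the lemma to a pointwise bound for this double sum at an arbitrary $x$.

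For each heavy $T = T_{\tau_k}$, the definition of heaviness gives $1 \le \lambda^{-1}\|\s_T f_{k+1,\tau_k}\|_\infty$, so
\[ |\s_T(x) f_{k+1,\tau_k}(x)| \;\le\; \lambda^{-1}\,|\s_T(x) f_{k+1,\tau_k}(x)|\cdot \|\s_T f_{k+1,\tau_k}\|_\infty. \]
The key pointwise estimate I would then prove is
\[ |\s_T(x) f_{k+1,\tau_k}(x)|\cdot \|\s_T f_{k+1,\tau_k}\|_\infty \;\le\; (\log R)^{c_1}\bigl(|f_{k+1,\tau_k}|^2 * \p_{\tilde T_{\tau_k}}\bigr)(x) \,+\, R^{-2000}. \]
The ingredients are (i) local constancy of $f_{k+1,\tau_k}$ up to $(\log R)^{c}$ factors on the dual tube scale $\tau_k^*$, coming from Lemma \ref{propfk} (essential Fourier support in $(1+(\log R)^{-8})\tau_k$); and (ii) the Gaussian decay of $\s_T$ outside $100\sqrt{\log R}\,T$ from Definition \ref{Gaussian}. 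If $\s_T(x)$ is non-negligible and $y_T$ is a near-maximizer of $\s_T f_{k+1,\tau_k}$, both $x$ and $y_T$ lie in $100\sqrt{\log R}\,T$, so $x-y_T$ lies in a translate of $(\log R)^{11}\tau_k^*$. Then Definition \ref{aux} of $\p_{\tilde T_{\tau_k}}$ as the supremum of $|\widecheck{\rho}_{\tau_k}|$ over the $(\log R)^{11}\tau_k^*$-neighborhood, together with local constancy of $|f_{k+1,\tau_k}|^2$ on scale $\tau_k^*$, forces the convolution on the right to absorb $|f_{k+1,\tau_k}(y_T)|^2$ (hence the factor $\|\s_T f_{k+1,\tau_k}\|_\infty^2$). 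The $R^{-2000}$ term handles the remaining $T$'s where $\s_T(x) \le R^{-500}$, combined with the trivial bound $|f_{k+1,\tau_k}|\le R^{100}$ afforded by Note \ref{normalization}.

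To finish, observe that at each $x$ and each $\tau_k$ only $O((\log R)^{c_2})$ tubes $T$ contribute non-negligibly (again by Gaussian decay of $\s_T$), so summing over heavy $T$ costs a factor $(\log R)^{c_2}$ and leaves the remainder absorbed in $R^{-1000}$. Summing over $\tau_k\subset\tau$ and applying Definition \ref{def: gk},
\[ \sum_{\tau_k\subset\tau} \bigl(|f_{k+1,\tau_k}|^2 * \p_{\tilde T_{\tau_k}}\bigr)(x) \;\le\; \sum_{\tau_k} \bigl(|f_{k+1,\tau_k}|^2 * \p_{\tilde T_{\tau_k}}\bigr)(x) \;=\; g_k(x), \]
yields the claimed bound $(\log R)^{\tilde c}\lambda^{-1} g_k(x) + R^{-1000}$. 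The hypothesis $R_k\ge\rho$ enters only in ensuring that $\tau_k\subset\tau$ is a well-defined containment relation.

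\textbf{Main obstacle.} The sharp control is the pointwise estimate in the second paragraph: the exponent $c_1$ must be an absolute constant independent of $k$, because the entire proof of Proposition \ref{mainp} allows only $(\log R)^{c}$ total loss across $N\sim \log R/\log\log R$ scales. This is what forces the design of $\p_{\tilde T_{\tau_k}}$ as a maximum over dilated dual tubes in Definition \ref{aux}, rather than a simple convolution of weights; a naive convolutional substitute would accumulate polynomial-in-$\log R$ factors at each scale when iterated, violating Lemma \ref{P}.
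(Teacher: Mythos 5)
Your proposal is correct and follows essentially the same route as the paper: telescope the difference to a sum over heavy wave packets, use heaviness to insert $\lambda^{-1}\|\s_T f_{k+1,\tau_k}\|_\infty$, relate the local sup of $|f_{k+1,\tau_k}|$ to $|f_{k+1,\tau_k}|*\p_{\tilde T_{\tau_k}}(x)$ via the Fourier-support/$\widecheck{\rho}_{\tau_k}$ argument and Definition \ref{aux}, then sum over $\tau_k\subset\tau$ to reach $g_k$. The only place the paper is more explicit is the step from $\bigl(|f_{k+1,\tau_k}|*\p_{\tilde T_{\tau_k}}(x)\bigr)^2$ to $\|\p_{\tilde T_{\tau_k}}\|_{L^1}\cdot\bigl(|f_{k+1,\tau_k}|^2*\p_{\tilde T_{\tau_k}}\bigr)(x)$ by Cauchy--Schwarz in the convolution integral together with Lemma \ref{P}; you gesture at this with ``local constancy forces the convolution to absorb $|f_{k+1,\tau_k}(y_T)|^2$,'' which is the right idea but should be written out as the Cauchy--Schwarz step.
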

\begin{proof} By \eqref{eq: fktauk},  
\[ \sum_{\tau_k\subset\tau}f_{k+1,\tau_k}(x)-\sum_{\tau_k\subset\tau}f_{k,\tau_k}(x)=\sum_{\tau_k\subset\tau}\sum_{\substack{T\in \T_{\tau_k}\\ \|\s_Tf_{k+1,\tau_k}\|_\infty>\lambda}}\s_T(x)f_{k+1,\tau_k}(x). \]
Recall that $\s_T$ has Gaussian decay off of $T$. It follows from \eqref{eq: trivialbd} that \begin{align*}
 \big|\sum_{\tau_k\subset\tau}\sum_{\substack{T\in\T_{\tau_k}\\\|\s_Tf_{k+1,\tau_k}\|_\infty>\lambda\\x\not\in\log R\cdot T}}\s_T(x)f_{k+1,\tau_k}(x)\big|\le R^{-1000}. 
\end{align*}
Then we have
\begin{align*} 
\big|\sum_{\tau_k\subset\tau}\sum_{\substack{T\in\T_{\tau_k}\\\|\s_Tf_{k+1,\tau_k}\|_\infty>\lambda\\x\in\log R\cdot T}}\s_T(x)f_{k+1,\tau_k}(x)\big|&\le \lambda^{-1}\sum_{\tau_k\subset\tau}\sum_{\substack{T\in\T_{\tau_k}\\\|\s_Tf_{k+1,\tau_k}\|_\infty>\lambda\\x\in\log R\cdot T}}\|\s_Tf_{k+1,\tau_k}\|_\infty^2 \\
\end{align*}

The number of terms in the inner sum is $\lesssim (\log R)^2$.  Let $T_{\tau_k}(x)$ be the unique rectangle in the tiling $\T_{\tau_k}$ that includes $x$.  If $x \in (\log R) T$, then $\s_T$ is essentially supported in $10 (\log R) T_{\tau_k}(x)$, and so we have

\[ 
\sum_{\tau_k\subset\tau}\sum_{\substack{T\in\T_{\tau_k}\\\|\s_Tf_{k+1,\tau_k}\|_\infty>\lambda\\x\in\log R\cdot T}}\|\s_Tf_{k+1,\tau_k}\|_\infty^2 \lesssim (\log R)^2 \lambda^{-1} \sum_{\tau_k \subset \tau} \| f_{k+1, \tau_k} \|_{L^\infty( 10 (\log R) T_{\tau_k}(x))}^2 + R^{-1000}. \]

Recall by Lemma \ref{propfk} that the Fourier transform of $f_{k+1,\tau_k}$ is essentially supported in $(1+(\log R)^{-10})\tau_k$, so for $\rho_{\tau_k}$ from Definition \ref{rho},
\[ |f_{k+1,\tau_k}(y)|\le|f_{k+1,\tau_k}*\widecheck{\rho}_{\tau_k}(y)|+R^{-1000}.\]

Now we defined $\p_{\tilde T_{\tau_k}}(z)$ to be at least $ \underset{z + (\log R)^{11}\tau_k^*}{\sup} | \widecheck{\rho}_{\tau_k}|$, so

\begin{equation}\label{eq: lc3201}
\| f_{k+1, \tau_k} \|_{L^\infty( 10 (\log R) T_{\tau_k}(x))} \le |f_{k+1, \tau_k}| * \p_{\tilde T_{\tau_k}} (x). 
\end{equation}
Therefore,

\[ 
\big|\sum_{\tau_k\subset\tau}\sum_{\substack{T\in\T_{\tau_k}\\\|\s_Tf_{k+1,\tau_k}\|_\infty>\lambda\\x\in\log R\cdot T}}\s_T(x)f_{k+1,\tau_k}(x)\big| \lesssim (\log R)^2 \lambda^{-1} \sum_{\tau_k \subset \tau} \left( | f_{k+1, \tau_k}| * \p_{\tilde T_{\tau_k}} (x) \right)^2 +  R^{-1000}. \]
Applying Cauchy-Schwarz to the integral in the convolution, we get
\[ \le (\log R)^2 \lambda^{-1} \sum_{\tau_k \subset \tau}  | f_{k+1, \tau_k}|^2 * \p_{\tilde T_{\tau_k}} (x) \| \p_{\tilde T_{\tau_k}} \|_{L^1} +  R^{-1000}. \]
Finally, note that $\| \p_{\tilde T_{\tau_k}} \|_{L^1} \le (\log R)^c$ by Lemma \ref{P} and that
\begin{equation}\label{eq: lc3202}
\sum_{\tau_k \subset \tau}  | f_{k+1, \tau_k}|^2 * \p_{\tilde T_{\tau_k}} (x) \le g_k(x)
\end{equation}
 by Definition~\ref{def: gk}.
\end{proof}

\subsection{High/low lemmas for $g_k$}

\begin{definition}[Definition of $g_k^{\ell}$ and $g_k^h$]\label{def: gkhl}
	 For $k=1,\ldots,N-1$ and $\eta_k$ from \eqref{etadef}, define
\[ g_k^{\ell}:=g_k*\widecheck{\eta}_k\quad\text{and}\quad g_k^h:=g_k-g_k^{\ell}.  \]
\end{definition}

\begin{lemma}[Low lemma]\label{low}
\[ |g_k^{\ell}|\le g_{k+1} +R^{-1000}\]
\end{lemma}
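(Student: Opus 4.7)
My plan is to Fourier-expand $g_k^\ell = g_k*\widecheck{\eta}_k$, exploit well-spacing at scale $R_{k+1}$ to kill the cross terms arising in the expansion of $|f_{k+1,\tau_k}|^2$, and then use the recursive clause of Definition~\ref{aux} to absorb the leftover $|\widecheck{\eta}_k|$ into the next-scale auxiliary function $\p_{\tilde T_{\tau_{k+1}}}$. I would first write
\[ g_k^\ell(x) = \sum_{\tau_k} |f_{k+1,\tau_k}|^2 * \p_{\tilde T_{\tau_k}} * \widecheck{\eta}_k(x), \]
and then substitute $f_{k+1,\tau_k} = \sum_{\tau_{k+1}\subset\tau_k} f_{k+1,\tau_{k+1}}$ (from \eqref{eq: fktauk1} shifted up by one index) so that $|f_{k+1,\tau_k}|^2$ becomes the diagonal $\sum_{\tau_{k+1}\subset\tau_k}|f_{k+1,\tau_{k+1}}|^2$ plus cross terms $f_{k+1,\tau_{k+1}}\overline{f_{k+1,\tau_{k+1}'}}$ with $\tau_{k+1}\ne\tau_{k+1}'$.

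Next I would kill the cross terms. By the essential Fourier support bound for $f_{k+1,\tau_{k+1}}$ in Lemma~\ref{propfk}, each $\widehat{f_{k+1,\tau_{k+1}}}$ is essentially supported in $(1+(\log R)^{-8})\tau_{k+1}$, so a cross term's Fourier transform is essentially supported in $(1+(\log R)^{-8})\tau_{k+1} - (1+(\log R)^{-8})\tau_{k+1}'$. When $\tau_{k+1}\ne\tau_{k+1}'$, well-spacing at scale $R_{k+1}$ gives $\text{dist}(\tau_{k+1},\tau_{k+1}')\ge \tfrac{1}{2} R_{k+2}^{-1/2}$; since $R_{k+1}^{-1/2}=(\log R)^{6} R_{k+2}^{-1/2}$, the $(1+(\log R)^{-8})$ dilation only adds an $O((\log R)^{-2} R_{k+2}^{-1/2})$ correction, so for large $R$ the cross-term support stays at distance $\ge \tfrac{1}{3} R_{k+2}^{-1/2}$ from the origin. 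But $\eta_k$ is supported in $\{|\xi_i|\le\tfrac{1}{4}R_{k+2}^{-1/2}\}$, which is disjoint from this set; hence after convolving with $\widecheck{\eta}_k$ each cross term contributes at most $R^{-10000}$, and summing over the $R^{O(1)}$ pairs is absorbed into the final $R^{-1000}$ error.

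For the diagonal terms, bringing absolute values inside and using $|f_{k+1,\tau_{k+1}}|^2\ge 0$ together with $|\p_{\tilde T_{\tau_k}}*\widecheck{\eta}_k|\le \p_{\tilde T_{\tau_k}}*|\widecheck{\eta}_k|$ yields
\[ |g_k^\ell(x)| \le \sum_{\tau_{k+1}} |f_{k+1,\tau_{k+1}}|^2 * (\p_{\tilde T_{\tau_k}}*|\widecheck{\eta}_k|)(x) + R^{-1000}, \]
where $\tau_k$ denotes the parent of $\tau_{k+1}$. The recursive clause of Definition~\ref{aux} gives $\p_{\tilde T_{\tau_k}}*|\widecheck{\eta}_k|\le \p_{\tilde T_{\tau_{k+1}}}$. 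To match the resulting bound against $g_{k+1}$, I would invoke property (2) of Lemma~\ref{propfk} to get $|f_{k+1,\tau_{k+1}}|\le |f_{k+2,\tau_{k+1}}|$ when $k+1<N$, and for the boundary case $k+1=N$ I would use that $f_{N,\theta} = f_\theta\cdot\sum_{T\in\T_{\theta,\lambda}}\s_T$ with $0\le\sum_{T\in\T_{\theta,\lambda}}\s_T\le 1$ (a partial sum of a nonnegative Gaussian partition of unity), so $|f_{N,\theta}|\le|f_\theta|$. Either way $\sum_{\tau_{k+1}} |f_{k+1,\tau_{k+1}}|^2 * \p_{\tilde T_{\tau_{k+1}}}\le g_{k+1}$, finishing the argument.

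The main obstacle is calibrating the cross-term step: the $(1+(\log R)^{-8})$ Fourier-support blow-up must be much smaller than the well-spacing gap $\tfrac{1}{2}R_{k+2}^{-1/2}$, which is precisely why the scale ratio $R_{k+2}/R_{k+1}=(\log R)^{12}$ is chosen so large compared to the blow-up. A closely related subtlety is that the inequality $\p_{\tilde T_{\tau_k}}*|\widecheck{\eta}_k|\le \p_{\tilde T_{\tau_{k+1}}}$ must hold with \emph{no} multiplicative loss — it has been engineered into Definition~\ref{aux} for exactly this reason, since any constant $C_{\text{low}}>1$ here would compound across the $N\sim\log R/\log\log R$ iterations of the low lemma and destroy the $(\log R)^c$ bound.
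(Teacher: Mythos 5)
Your proof follows essentially the same approach as the paper's: Fourier-expand $g_k^\ell$, split $f_{k+1,\tau_k}$ into $\sum_{\tau_{k+1}\subset\tau_k}f_{k+1,\tau_{k+1}}$, kill the cross terms using the well-spacing hypothesis, then invoke property (2) of Lemma~\ref{propfk} and the recursive inequality $\p_{\tilde T_{\tau_k}}*|\widecheck{\eta}_k|\le\p_{\tilde T_{\tau_{k+1}}}$ built into Definition~\ref{aux}. Your explicit treatment of the boundary case $k+1=N$ (using $|f_{N,\theta}|\le|f_\theta|$ directly, since $f_{N,\theta}$ is a partial sum of a nonnegative Gaussian partition of unity applied to $f_\theta$) is a nice touch which the paper's proof leaves implicit.

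One small bookkeeping slip worth fixing: you estimate that the enlarged cross-term support lies at distance $\ge\tfrac13 R_{k+2}^{-1/2}$ from the origin and then assert this region is disjoint from the support of $\eta_k$, namely $\{|\xi_i|\le\tfrac14 R_{k+2}^{-1/2}\}$. As stated this does not follow: that square contains points of Euclidean norm up to $\tfrac{\sqrt2}{4}R_{k+2}^{-1/2}\approx 0.354\,R_{k+2}^{-1/2}>\tfrac13 R_{k+2}^{-1/2}$, so the region $\{|\xi|\ge\tfrac13 R_{k+2}^{-1/2}\}$ and the square $\{|\xi_i|\le\tfrac14 R_{k+2}^{-1/2}\}$ actually overlap. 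The remedy is simply to keep the sharper lower bound you already derived: the well-spacing gives distance $\ge\tfrac12 R_{k+2}^{-1/2}$ before the blow-up, and the $(1+(\log R)^{-8})$-dilation subtracts only $O((\log R)^{-2})R_{k+2}^{-1/2}$, so the enlarged cross-term support remains at distance $\ge\bigl(\tfrac12-O((\log R)^{-2})\bigr)R_{k+2}^{-1/2}$, which exceeds $\tfrac{\sqrt2}{4}R_{k+2}^{-1/2}$ for $R$ large. With that tighter estimate the disjointness holds and the rest of your argument goes through.
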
 

\begin{proof} Write
\[ |g_k^{\ell}(x)|=|\sum_{\tau_k}|f_{k+1,\tau_k}|^2*\p_{\tilde{T}_{\tau_k}}*\widecheck{\eta}_k(x)| \]
where $\eta_k$ is defined in Definition~\ref{eta}. Analyze for each $\tau_k$:
\begin{align*}
    |f_{k+1,\tau_k}|^2*\widecheck{\eta}_k(x)&= \int\widehat{f}_{k+1,\tau_k}*\widehat{\overline{f}}_{k+1,\tau_k}(\xi) e^{2\pi ix\cdot\xi}\eta_k(\xi)d\xi  \\
    &= \sum_{\tau_{k+1},\tau_{k+1}'\subset\tau_k}\int\widehat{f}_{k+1,\tau_{k+1}}*\widehat{\overline{f}}_{k+1,\tau_{k+1}'}(\xi) e^{2\pi ix\cdot\xi}\eta_k(\xi)d\xi.   
\end{align*}
The function $\widehat{f}_{k+1,\tau_{k+1}}*\widehat{\overline{f}}_{k+1,\tau_{k+1}'}$ is essentially supported in $(1+(\log R)^{-8})(\tau_{k+1}-\tau_{k+1}')$. By the well-spaced property, we know that this set does not intersect the ball of radius $\frac{1}{2}R_{k+2}^{-1/2}$ (the support of $\eta_k$) unless $\tau_k=\tau_k'$.  Therefore,
up to errors of size $R^{-1000}$, we have

\[    |f_{k+1,\tau_k}|^2*\widecheck{\eta}_k(x) =  \sum_{\tau_{k+1}\subset\tau_k}\int\widehat{f}_{k+1,\tau_{k+1}}*\widehat{\overline{f}}_{k+1,\tau_{k+1}}(\xi) e^{2\pi ix\cdot\xi}\eta_k(\xi)d\xi = \sum_{\tau_{k+1} \subset \tau_k} |f_{k+1, \tau_{k+1}}|^2 * \widecheck{\eta}_k(x). \]

By Lemma \ref{propfk}, $|f_{k+1,\tau_{k+1}}|\le |f_{k+2,\tau_{k+1}}|$, and so

\[   | |f_{k+1,\tau_k}|^2*\widecheck{\eta}_k(x) | \le  \sum_{\tau_{k+1} \subset \tau_k} |f_{k+2, \tau_{k+1}}|^2 * |\widecheck{\eta}_k|(x). \]

Plug this back in to the definition of $g_k^{\ell}$ to get

\[ |g_k^{\ell}(x)| \le \sum_{\tau_k} ||f_{k+1,\tau_k}|^2*\widecheck{\eta}_k| *\p_{\tilde T_{\tau_k}} \le \]

\[ \le \sum_{\tau_{k+1}} |f_{k+2, \tau_{k+1}}|^2 * |\widecheck{\eta}_k| * \p_{\tilde T_{\tau_k}}. \]

Now $\p_{\tilde T_{\tau_{k+1}}}$ was defined  in Definition~\ref{aux} so that when $\tau_{k+1} \subset \tau_k$,

\[  |\widecheck{\eta}_k| * \p_{\tilde T_{\tau_k}} \le \p_{\tilde T_{\tau_{k+1}}}. \]

Plugging that in again, we get

\[ | g_k^\ell | \le \sum_{\tau_{k+1}} |f_{k+2, \tau_{k+1}}|^2 *  \p_{\tilde T_{\tau_{k+1}}} = g_{k+1}. \]

\end{proof}


\begin{lemma}[High lemma] \label{hilem}
\[\int|g_k^h|^2 \lesssim (\log R)^{\tilde{c}}\int\sum_{\tau_k}|f_{k+1,\tau_k}|^4+R^{-1000} . \]
\end{lemma}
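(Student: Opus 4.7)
\textbf{Plan for Lemma~\ref{hilem}.} The strategy is to apply Plancherel to move to the frequency side, exploit the restricted Fourier support of $\widehat{g_k^h}$ together with the geometric overlap structure of the rectangles $\tau_k-\tau_k$ by a pointwise Cauchy--Schwarz, and then return to physical space and absorb the convolutions by Young's inequality using the $L^1$ bound from Lemma~\ref{P}.

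Concretely, since $g_k^h = g_k - g_k*\widecheck{\eta}_k$, Plancherel gives
\[
\int |g_k^h|^2 \;=\; \int (1-\eta_k)^2\,|\widehat{g_k}|^2\,d\xi,
\]
and expanding,
\[
\widehat{g_k}(\xi) \;=\; \sum_{\tau_k}\widehat{|f_{k+1,\tau_k}|^2}(\xi)\,\widehat{\varphi_{\tilde T_{\tau_k}}}(\xi).
\]
By Lemma~\ref{propfk}\eqref{item3}, $\widehat{|f_{k+1,\tau_k}|^2}$ is essentially supported in $(1+(\log R)^{-8})(\tau_k-\tau_k)$, which is a rectangle of dimensions $\sim R_k^{-1/2}\times R_k^{-1}$ centered at the origin and aligned with the tangent to $\mb{P}^1$ at $\tau_k$. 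By the curvature of the parabola, as $\tau_k$ varies, these rectangles are oriented at pairwise angular separations $\gtrsim R_k^{-1/2}$.

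The key geometric input is that on the support of $1-\eta_k$ we have $|\xi|\gtrsim \rho_k := (4\log R)^{-1} R_{k+2}^{-1/2}\sim (\log R)^{-13} R_k^{-1/2}$, and at each such $\xi$ the number of $\tau_k$ for which the enlarged rectangle $(1+(\log R)^{-8})(\tau_k-\tau_k)$ contains $\xi$ is at most $M\lesssim \rho_k^{-1}R_k^{-1/2}\lesssim(\log R)^{13}$. Applying Cauchy--Schwarz pointwise in $\xi$, and then Plancherel back to physical space,
\[
\int (1-\eta_k)^2|\widehat{g_k}|^2 \;\lesssim\; (\log R)^{13}\sum_{\tau_k}\int \bigl||f_{k+1,\tau_k}|^2*\varphi_{\tilde T_{\tau_k}}\bigr|^2 + R^{-1000},
\]
where the $R^{-1000}$ absorbs the rapidly decaying Fourier tails outside the essential support. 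Finally, Young's inequality $\|u*v\|_2\le \|u\|_2\|v\|_1$ combined with $\|\varphi_{\tilde T_{\tau_k}}\|_1\lesssim (\log R)^{\tilde c}$ from Lemma~\ref{P} yields
\[
\int |g_k^h|^2 \;\lesssim\; (\log R)^{13+2\tilde c}\sum_{\tau_k}\int |f_{k+1,\tau_k}|^4 + R^{-1000},
\]
which is the desired bound after absorbing the exponent into a new $\tilde c$.

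The main technical point is justifying the overlap count $M\lesssim (\log R)^{13}$ for the slightly enlarged rectangles $(1+(\log R)^{-8})(\tau_k-\tau_k)$ on the support of $1-\eta_k$. The threshold $\rho_k$ was chosen in Definition~\ref{eta} precisely to match the $(\log R)^{\tilde c}$ loss we can afford; the tiny $(1+(\log R)^{-8})$ enlargement from the ``essential support'' of $\widehat{|f_{k+1,\tau_k}|^2}$ changes this count by only a constant factor, and the decaying tails contribute at most $R^{-1000}$. The extra factor $\varphi_{\tilde T_{\tau_k}}$ appearing in the definition of $g_k$ causes no real difficulty, since its Fourier transform is bounded and, after Plancherel, its effect is a convolution which is controlled by Young's inequality and the $L^1$ estimate from Lemma~\ref{P}.
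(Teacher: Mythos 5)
Your proof is correct and takes essentially the same route as the paper's: Plancherel to pass to the Fourier side, a $(\log R)^{O(1)}$ overlap count for the rectangles $\tau_k-\tau_k$ outside the $\rho_k$-ball to apply Cauchy--Schwarz, and then Young's inequality together with $\|\p_{\tilde T_{\tau_k}}\|_{L^1}\lesssim (\log R)^{\tilde c}$ from Lemma~\ref{P} to close. (Minor note: for the essential Fourier support of $f_{k+1,\tau_k}$ you should cite Lemma~\ref{propfk}\eqref{item4} rather than \eqref{item3}, but this does not affect the argument.)
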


\begin{proof} By Definition~\ref{def: gk} and \ref{def: gkhl}, 
\begin{align*} 
\int |g_k^h|^2 &= \sum_{\tau_k}\sum_{\tau_k'}\int(|f_{k+1,\tau_k}|^2)^{\widehat{\,\,}}\,\,\widehat{\p}_{\tilde{T}_{\tau_k}}(1-\eta_k) (|f_{k+1,\tau_k'}|^2)^{\widehat{\,\,}}\,\,\widehat{\p}_{\tilde{T}_{\tau_k'}}(1-\eta_k).
\end{align*} 
The Fourier transform of $|f_{k+1,\tau_k}|^2$ is essentially supported in $2(\tau_k-\tau_k)$. Recall also from Definition~\ref{eta}  that $(1-\eta_k)$ is supported where $|\xi|\ge \frac{1}{4\log R}R_{k+2}^{-1/2}$.    The set $2(\tau_k-\tau_k)\setminus B(\frac{1}{4\log R}R_{k+2}^{-1/2})$ overlaps at most $\sim \log R R_{k+2}^{1/2}/R_k^{1/2}=(\log R)^{13}$ many of the sets $2(\tau_k'-\tau_k')\setminus B(\frac{1}{4\log R}R_{k+2}^{-1/2})$. Thus applying Cauchy-Schwarz to the integral in the convolution, we get
\begin{align*}  
\int \big|g_k^h\big|^2 &\lesssim (\log R)^{\tilde{c}'}\int\sum_{\tau_k}||f_{k+1,\tau_k}|^2*\p_{\tilde{T}_{\tau_k}}|^2+R^{-1000}\\
&\lesssim (\log R)^{\tilde{c}'}\int\sum_{\tau_k}  \| \p_{\tilde{T}_{\tau_k}}\|_1^2 ||f_{k+1,\tau_k}|^4  +R^{-1000}\\
&\lesssim (\log R)^{\tilde{c}''}\int\sum_{\tau_k}|f_{k+1,\tau_k}|^4+R^{-1000} .
\end{align*} 
where we used that $\|\p_{\tilde{T}_{\tau_k}}\|_1\lesssim (\log R)^{\tilde{c}}$ (by Lemma \ref{P}).
\end{proof}

\subsection{The sets $\Omega_k$}
 
In this subsection, we will decompose the starting set $Q_R$ into $(Q_R\cap L)\cup(Q_R\cap\Omega_1)\cup\cdots\cup(Q_R\cap\Omega_{N-1})$. On the set $\Omega_k$, the bilinearized version of $f$ is basically the same as for $f_k$ (see Lemma \ref{ftofk}) and $g_k$ is high-dominated (see Lemma \ref{high}).

\vspace*{.2in}

\begin{definition}[Definition of $\Omega_k$]\label{omega} Recall the parameter $r>0$ defined in Definition \eqref{eq: r}. Let $\Omega_N=\cup Q_N$ be a collection of pairwise disjoint $R^{1/2}$-cubes which have nonempty intersection with $Q_R$. Let $\Omega_{N-1}$ be a collection of disjoint $R_{N-1}^{1/2}$-cubes $Q_{N-1}$ in $\Omega_N$ satisfying
\[(1+\delta)r+R^{-500} < \|g_{N-1}\|_{L^\infty((\log R)^9\,Q_{N-1})}.  \]

We define $\Omega_{k}$ for $k = N-2$, then $k=N-3$, down to $k=1$.  To define $\Omega_k$, partition $Q_R\setminus(\Omega_{N-1}\sqcup\cdots\sqcup\Omega_{k+1})$ into $R_k^{1/2}$-cubes $Q_{k}$. Define $\Omega_k$ to be the collection of $Q_{k}$ in the partition which satisfy 
\[  (1+\delta)^{N-k}r+(N-k)R^{-500} <\|g_k\|_{L^\infty((\log R)^9\,Q_k)}.  \]
Also define 
\[ L:=Q_R \setminus(\Omega_1\sqcup \Omega_2\sqcup\cdots\sqcup \Omega_{N-1}).\]

Recall that $\delta=\frac{1}{\log R}$. The absolute constant $C$ will be determined by the proof of Lemma \ref{high}. 
\end{definition}

\vspace*{.2in}

\begin{lemma}\label{ftofk}  Suppose $\tau$ is a $\rho^{-1/2} \times \rho^{-1}$ rectangle in a $\rho^{-1}$-neighborhood of $\mb{P}^1$, at any scale $1 \le \rho \le R$.  For $k=1,\ldots,N-1$, if $R_k \ge \rho$, 
\[ |f_{\tau}(x)-\sum_{\tau_k\subset\tau}f_{k+1,\tau_k}(x)|\le(\log R)^{-10}\a +R^{-500} \qquad \qquad \forall x\in U_\a\cap Q_R\cap \Omega_k. \]
\end{lemma}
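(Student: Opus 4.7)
The plan is a telescoping argument over the pruning scales $j = N, N-1, \ldots, k+1$, combined with the defining property of $\Omega_k$ that higher-scale square functions are controlled by $r$ on $\Omega_k$. Using the identity $f_{j, \tau_{j-1}} = \sum_{\tau_j \subset \tau_{j-1}} f_{j, \tau_j}$ from \eqref{eq: fktauk1} to align consecutive ``partially pruned'' versions of the sum, I would write
\[
f_\tau - \sum_{\tau_k \subset \tau} f_{k+1, \tau_k} = \sum_{\theta \subset \tau} (f_\theta - f_{N, \theta}) + \sum_{j = k+1}^{N-1} \sum_{\tau_j \subset \tau} (f_{j+1, \tau_j} - f_{j, \tau_j}),
\]
expressing the difference as a sum of $N - k$ single-level ``pruning defects''.

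For each interior index $j \in \{k+1, \ldots, N-1\}$, Lemma \ref{hkbound} applies directly (the hypothesis $R_j \ge \rho$ follows from $R_j \ge R_k \ge \rho$), yielding
\[
\Bigl| \sum_{\tau_j \subset \tau} (f_{j+1, \tau_j} - f_{j, \tau_j})(x) \Bigr| \lesssim (\log R)^{\tilde c}\, \lambda^{-1}\, g_j(x) + R^{-1000}.
\]
For the boundary term corresponding to the initial pruning from $f$ to $f_N$, the proof of Lemma \ref{hkbound} adapts verbatim with $f_\theta$ playing the role of $f_{k+1, \tau_k}$ and $g_N$ (from its separate definition) playing the role of $g_k$, giving the same kind of bound with $g_N(x)$ on the right. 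This is essentially the ``missing $k = N$'' case of Lemma \ref{hkbound}, not requiring a new argument.

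The key use of the hypothesis $x \in \Omega_k$ is that $g_j(x) \lesssim r$ for every $j > k$. Indeed, by the inductive partition construction in Definition \ref{omega}, $x \in \Omega_k$ forces $x \notin \Omega_j$ for $j \in \{k+1, \ldots, N-1\}$ while still lying in the set being partitioned at level $j$; the $R_j^{1/2}$-cube containing $x$ was therefore inspected and rejected at the level-$j$ step, giving $g_j(x) \le (1+\delta)^{N-j} r + (N-j) R^{-500}$. Since $\delta = 1/\log R$ and $N \le \log R / (12 \log \log R)$, the exponential factor satisfies $(1+\delta)^{N-j} \le e^{1/(12 \log \log R)} \le 2$ for large $R$, so $g_j(x) \lesssim r$ up to negligible additive error; and $g_N(x) \le r$ holds everywhere by the definition of $r$.

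Plugging these bounds into the telescope, using $\lambda^{-1} r = (\log R)^{-m} \alpha$ from Notation \ref{deflambda}, and summing over the $N - k \lesssim \log R$ levels gives
\[
\Bigl| f_\tau(x) - \sum_{\tau_k \subset \tau} f_{k+1, \tau_k}(x) \Bigr| \lesssim (\log R)^{\tilde c + 1 - m}\, \alpha + R^{-500}.
\]
Choosing $m \ge \tilde c + 11$ in Notation \ref{deflambda} yields the desired bound. The main subtlety I expect in carrying this out is not any individual estimate -- each is a small variant of Lemma \ref{hkbound} -- but rather the bookkeeping: $m$ must be chosen as an absolute constant independent of $k$ and of $\rho$, so that the same $\lambda$ handles every admissible $k \in \{1,\ldots,N-1\}$ uniformly. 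This is what forces the telescope's length factor $N \lesssim \log R$ to be absorbed by an extra $\log R$-power of slack in $m$.
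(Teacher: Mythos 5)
Your proposal is correct and follows essentially the same route as the paper's proof: the identical telescoping decomposition across pruning levels via \eqref{eq: fktauk1}, Lemma \ref{hkbound} applied to each level, the bound $g_j(x)\lesssim r$ for $j>k$ extracted from the inductive construction of $\Omega_k$ in Definition \ref{omega}, and absorbing the factor $N-k\lesssim\log R$ by taking $m$ large. Your treatment of the boundary term $\sum_{\theta\subset\tau}(f_\theta - f_{N,\theta})$ as a ``$k=N$'' analogue of Lemma \ref{hkbound} is a correct and slightly more explicit reading of what the paper does implicitly when it sums $g_N+g_{N-1}+\cdots+g_{k+1}$.
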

\begin{proof} First note that for each $l\in\{1,\ldots,N-1\}$, by \eqref{eq: fktauk1}, 
\[ \sum_{\tau_{l}\subset\tau}f_{l,\tau_l}=\sum_{\tau_{l-1}\subset\tau}\sum_{\tau_l\subset\tau_{l-1}}f_{l,\tau_l} =\sum_{\tau_{l-1}\subset\tau}f_{l,\tau_{l-1}}. \]
Then we may decompose the difference as
\begin{align*}  
|f_{\tau}-\sum_{\tau_k\subset\tau}f_{k+1,\tau_k}|&=|f_{\tau}-\sum_{\theta\subset\tau}f_{N,\theta}|+|\sum_{\tau_{N-1}\subset\tau}f_{N,\tau_{N-1}}-\sum_{\tau_{N-1}\subset\tau}f_{N-1,\tau_{N-1}}|\\
&+\cdots+ |\sum_{\tau_{k+1}\subset\tau}f_{k+2,\tau_{k+1}}-\sum_{\tau_{k+1}\subset\tau }f_{k+1,\tau_{k+1}}|.
\end{align*}
By Lemma \ref{hkbound}, this is bounded by 
\begin{align*}
(\log R)^{\tilde{c}}\lambda^{-1}(g_N(x) +g_{N-1}(x)+\cdots+g_{k+1}(x))   +(N-k)R^{-1000}.
\end{align*} 
Finally, use the definition of $\Omega_k$ for $k\le N-1$ to say
\begin{equation} \label{ftofkeq} |f_{\tau}(x)-\sum_{\tau_k\subset\tau}f_{k+1,\tau_k}|\lesssim (\log R)^{\tilde{c}} (N-k)(1+\delta)^{N-k}\lambda^{-1}r+(N-k)^2R^{-500}  \end{equation}
Recall that $N \le \log R$, $(1+\delta)^N\sim 1$, and recall that $\lambda$ was defined in Notation \ref{deflambda} by
\[ \lambda = (\log R)^m r \alpha^{-1}. \]
By choosing $m$ sufficiently large, we can guarantee that the main term on the right-hand side of  inequality~\eqref{ftofkeq} is bounded by $(\log R)^{-10} \alpha$. 
\end{proof}

\begin{lemma}[$g_k$ is high-dominated on $\Omega_k$]\label{high} Let $k=1,\ldots,N-1$. For each $R_k^{1/2}$-cube $Q_k\subset \Omega_k$,  
\[ \|g_k\|_{L^\infty((\log R)^9\, Q_k)}\le 2(\log R)\|g_k^h\|_{L^\infty((\log R)^9\, Q_k)} .\]
\end{lemma}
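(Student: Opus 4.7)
The plan is to argue by contradiction. Suppose
\[ M := \|g_k\|_{L^\infty((\log R)^9 Q_k)} > 2(\log R)\,\|g_k^h\|_{L^\infty((\log R)^9 Q_k)}, \]
and pick $y_0\in(\log R)^9 Q_k$ with $g_k(y_0)=M$. Applying the low lemma (Lemma~\ref{low}) pointwise at $y_0$ together with the contradiction hypothesis yields
\[ g_{k+1}(y_0)\ge g_k(y_0)-|g_k^h(y_0)|-R^{-1000}\ge M\Bigl(1-\tfrac{1}{2\log R}\Bigr)-R^{-1000}. \]
Combining this with the $\Omega_k$-defining inequality $M>(1+\delta)^{N-k}r+(N-k)R^{-500}$ (with $\delta=1/\log R$) and the elementary estimate $(1+\delta)(1-\delta/2)\ge 1+\delta/3$, I deduce
\[ g_{k+1}(y_0)>(1+\delta)^{N-k-1}r+(N-k-1)R^{-500}, \]
which exceeds the $\Omega_{k+1}$-defining threshold.

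To close the argument, the idea is to produce an opposing upper bound on $g_{k+1}(y_0)$ using the disjointness of the sets $\Omega_j$. First rule out the cases $y_0\in L$ and $y_0\in\Omega_j$ for $j<k$: in either of these, $y_0$ would sit in the $k$-partition inside some $R_k^{1/2}$-cube $Q'\notin\Omega_k$, which forces $g_k(y_0)\le(1+\delta)^{N-k}r+(N-k)R^{-500}<M$, a contradiction. Thus $y_0$ either lies in some $\Omega_k$-cube $Q_k'$, or in some $\Omega_j$ with $j\ge k+1$. In the first case, $Q_k'\subset\Omega_k$ is disjoint from $\Omega_{k+1}\sqcup\cdots\sqcup\Omega_{N-1}$, so $y_0$ belongs to a cube $Q''_{k+1}\notin\Omega_{k+1}$ of the $(k+1)$-partition; the failure of the $\Omega_{k+1}$-condition on $Q''_{k+1}$ gives $g_{k+1}(y_0)\le(1+\delta)^{N-k-1}r+(N-k-1)R^{-500}$, contradicting the display above.

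The main obstacle is the residual case $y_0\in Q_j'\subset\Omega_j$ with $j\ge k+1$, where $y_0$ has been removed from the $(k+1)$-partition and the direct upper bound is unavailable. My plan here is downward induction on $k$, with base case $k=N-1$ handled directly since $g_N\le r$ pointwise. In the inductive step, the inductive hypothesis furnishes $g_j(y_0)\le 2(\log R)\,\|g_j^h\|_{L^\infty((\log R)^9 Q_j')}$, which is then fed into the iterated low lemma
\[ g_k(y_0)\le\sum_{\ell=k}^{j-1}|g_\ell^h(y_0)|+g_j(y_0)+(j-k)R^{-1000}, \]
transferring high-domination from scale $j$ back to scale $k$. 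The sharp $L^1$-bound $\|\widecheck\eta_k\|_1\le 1+c/\log R$ from Lemma~\ref{etalem}, together with the arithmetic slack between successive $(1+\delta)^{N-\ell}$ thresholds in the definitions of the $\Omega_\ell$, keeps the accumulated constants bounded across the $N\sim\log R/\log\log R$ scales and produces the final contradiction.
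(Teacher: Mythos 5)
Your argument starts along the right track: the contradiction hypothesis together with the Low Lemma and the $\Omega_k$ threshold do force $g_{k+1}$ to exceed the $\Omega_{k+1}$ threshold (your arithmetic here is fine, using $r\ge 1$ from Note~\ref{normalization}). However, the pointwise strategy — pinning a maximizer $y_0\in(\log R)^9 Q_k$ and asking which $\Omega_j$ it lies in — creates exactly the difficulty you flag as ``Case B,'' and your proposed downward-induction fix does not close it. If $y_0\in Q_j'\subset\Omega_j$ with $j\ge k+1$, the inductive hypothesis tells you $g_j$ is \emph{high-dominated} on $Q_j'$, i.e.\ $\|g_j\|_{L^\infty((\log R)^9 Q_j')}\le 2\log R\,\|g_j^h\|_{L^\infty((\log R)^9 Q_j')}$; this is an inequality in the \emph{wrong direction} — it does not make $g_j(y_0)$ small. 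Plugging it into the iterated low lemma leaves you with $M\le 2\log R\,\|g_j^h\|_{L^\infty((\log R)^9 Q_j')}+\sum_{\ell=k}^{j-1}|g_\ell^h(y_0)|+\cdots$, and you have no upper bounds on $|g_\ell^h(y_0)|$ for $\ell>k$ nor on $\|g_j^h\|$ at the scale-$j$ cube (which moreover sits far outside $(\log R)^9 Q_k$). Neither the $\|\widecheck\eta_k\|_1\le 1+c/\log R$ estimate nor the geometric slack between the $\Omega_\ell$ thresholds supplies the missing smallness, so no contradiction is forthcoming. And Case~B genuinely occurs: $(\log R)^9 Q_k$ has side $(\log R)^3 R_{k+1}^{1/2}$, so it sticks out of $Q_k$ far enough to intersect many $(k+1)$-scale cubes, some of which may well belong to $\Omega_{k+1}$ (or larger-$j$ sets).

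The paper sidesteps all of this by never choosing a point. Working entirely with $L^\infty$ norms, it assumes $\|g_k^h\|_{L^\infty((\log R)^9 Q_k)}\le\delta\|g_k^\ell\|_{L^\infty((\log R)^9 Q_k)}$, gets $\|g_k\|\le(1+\delta)\|g_{k+1}\|_{L^\infty((\log R)^9 Q_k)}+O(R^{-1000})$ from the Low Lemma, and then invokes the single fact that the \emph{parent} $(k+1)$-cube $Q_{k+1}\supset Q_k$ is \emph{not} in $\Omega_{k+1}$ (because $Q_k$ lies in the domain $Q_R\setminus(\Omega_{N-1}\sqcup\cdots\sqcup\Omega_{k+1})$ being partitioned at scale $k$). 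Since $(\log R)^9 Q_k\subset(\log R)^9 Q_{k+1}$, the failed $\Omega_{k+1}$ condition on $Q_{k+1}$ directly gives $\|g_{k+1}\|_{L^\infty((\log R)^9 Q_k)}\le(1+\delta)^{N-k-1}r+(N-k-1)R^{-500}$, contradicting the $\Omega_k$ condition on $Q_k$. This is the decisive observation your proof is missing: you should use the parent cube $Q_{k+1}\notin\Omega_{k+1}$ to control $\|g_{k+1}\|$ over all of $(\log R)^9 Q_k$ at once, rather than tracking the location of an individual extremizing point.
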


\begin{proof} First consider the $k=N-1$ case. By definition of $\Omega_{N-1}$, for each $Q_{N-1}\subset\Omega_{N-1}$, 
\begin{equation}\label{A} (1+\delta)r+R^{-500} < \|g_{N-1}\|_{L^\infty((\log R)^9\,Q_{N-1})}. \end{equation}
Note that
\[ \|g_{N-1}\|_{L^\infty((\log R)^9\,Q_{N-1})}\le \|g_{N-1}^{\ell}\|_{L^\infty((\log R)^9\,Q_{N-1})}+\|g_{N-1}^h\|_{L^\infty((\log R)^9\,Q_{N-1})}\]
and suppose that 
\[ \|g_{N-1}^h\|_{L^\infty((\log R)^9\,Q_{N-1})}\le \delta\|g_{N-1}^{\ell}\|_{L^\infty((\log R)^9\,Q_{N-1})}. \]
Then by Lemma \ref{low} and \eqref{eq: r}, 
\begin{align*}
\|g_{N-1}\|_{L^\infty((\log R)^9\,Q_{N-1})}&\le (1+\delta)\|g_{N-1}^{\ell}\|_{L^\infty((\log R)^9\,Q_{N-1})}\\
&\le (1+\delta)\|g_N\|_{L^\infty((\log R)^9 \,Q_{N-1})}+(1+\delta)R^{-1000} \\
&\le (1+\delta)r+R^{-500}.
\end{align*}
Note that this contradicts (\ref{A}), so we have the desired conclusion. 

Now suppose that $k\in\{1,\ldots,N-2\}$. By definition of $\Omega_k$, 
\begin{equation}\label{B} (1+\delta)^{N-k}r+(N-k)R^{-500}< \|g_k\|_{L^\infty((\log R)^9\,Q_k)}. \end{equation}
Note that
\[ \|g_k\|_{L^\infty((\log R)^9\,Q_k)}\le \|g_k^{\ell}\|_{L^\infty((\log R)^9\,Q_k)}+\|g_k^h\|_{L^\infty((\log R)^9\,Q_k)}\]
and suppose that 
\begin{equation}\label{B'} \|g_k^h\|_{L^\infty((\log R)^9\,Q_k)}\le \delta\|g_k^{\ell}\|_{L^\infty((\log R)^9\,Q_k)}. \end{equation}
Then by Lemma \ref{low}, 
\begin{equation}\label{assump1}
    \|g_k\|_{L^\infty((\log R)^9\,Q_k)}\le (1+\delta)\|g_k^{\ell}\|_{L^\infty((\log R)^9\,Q_k)}\le (1+\delta)\|g_{k+1}\|_{L^\infty((\log R)^9 \,Q_k)}+(1+\delta)R^{-1000} .
\end{equation}
By the definition of $\Omega_{k}$, 
\[ \|g_{k+1}\|_{L^\infty((\log R)^9\,Q_k)}\le (1+\delta)^{N-k-1}r+(N-k-1)R^{-500}  . \]
This combined with (\ref{assump1}) gives
\[ \|g_k\|_{L^\infty((\log R)^9\,Q_k)}\le (1+\delta)^{N-k}r+(N-k)R^{-500},\]
here we used that $(1+\delta)(N-k-1)R^{-500}+(1+\delta)R^{-1000}\le (N-k)R^{-500}$ since $N\lesssim \log R/\log\log R$. This contradicts (\ref{B}) and means that (\ref{B'}) must be false, so the conclusion follows. 
\end{proof}

\vspace*{.2in}
\subsection{Proof of Proposition \ref{mainp}} 

Recall from Note \ref{normalization} that we have replaced $f$ by a constant multiple $cf$ so that $\underset{\theta}{\max}\|f_\theta\|_{L^\infty(\mathbb{R}^2)}= 1$.

The first step of the proof of Proposition \ref{mainp} involves an application of a local bilinear restriction theorem. We will use the following version. 

\begin{theorem}[Local bilinear restriction] \label{locbilTS} Let $\tau_k$ and $\tau_k'$ be nonadjacent $\sim R_k^{-1/2}\times R_k^{-1}$-rectangles in an $R_k^{-1}-$neighborhood of $\mb{P}^1$. Suppose $j\geq k$ and $f\in\mc{S}$ has Fourier support in $\mc{N}_{R_j^{-1}}(\mb{P}^1)$.  Suppose $T$ is in the range $R_j\geq T>  10 (\log R)  \, R_j^{1/2}/\text{dist}(\tau_k, \tau_k')$, and that $Q_T$ is a cube of sidelength $T$.  Then

\[ \int_{Q_T} |f_{\tau_k}f_{\tau_k'}|^2\lesssim \frac{(\log R)^4 T^{-2}}{\text{dist}(\tau_k, \tau_k') }\int \sum_{\tau_j\subset \tau_k} |f_{\tau_j}|^2 \w_{Q_T} \cdot \int \sum_{\tau_j'\subset \tau_k'} |f_{\tau_j}|^2 \w_{Q_T} +R^{-1000} \]
for a Gaussian weight function $\w_{Q_T}$ localized to $\log R \, Q_T$ and with Fourier transform essentially supported in a ball of radius $2\log R \,T^{-1}$. 
\end{theorem}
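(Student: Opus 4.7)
The plan is to carry out the classical Tao--Vargas--Vega bilinear $L^2$ restriction argument at the finer scale $R_j$, using the curvature of $\mathbb{P}^1$ to reduce the resulting quadruple sum over sub-rectangles to its diagonal, and then extracting the factor $\text{dist}(\tau_k,\tau_k')^{-1}$ from the transversality of $\tau_k$ and $\tau_k'$. First I would fix a weight $\omega_{Q_T}\ge\mathbf{1}_{Q_T}$ (for example a Gaussian bump adapted to $\log R\cdot Q_T$) chosen so that $\omega_{Q_T}^{1/2}$ has Fourier transform essentially supported in $B(0,O((\log R)/T))$ and $\omega_{Q_T}^a\sim\omega_{Q_T}$ for $a\in[\tfrac12,2]$ up to $(\log R)^c$ losses. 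Replacing $\int_{Q_T}$ by $\int\cdot\,\omega_{Q_T}$, decomposing $f_{\tau_k}=\sum_{\tau_j\subset\tau_k}f_{\tau_j}$ and likewise $f_{\tau_k'}$, I expand
\[ \int|f_{\tau_k}f_{\tau_k'}|^2\omega_{Q_T}=\sum_{\substack{\tau_{j,1},\tau_{j,2}\subset\tau_k\\ \tau_{j,1}',\tau_{j,2}'\subset\tau_k'}}\int f_{\tau_{j,1}}f_{\tau_{j,1}'}\overline{f_{\tau_{j,2}}f_{\tau_{j,2}'}}\,\omega_{Q_T}. \]
The integrand has Fourier support in $(\tau_{j,1}-\tau_{j,2})+(\tau_{j,1}'-\tau_{j,2}')$, so Plancherel together with the essential support $B(0,2(\log R)/T)$ of $\widehat{\omega_{Q_T}}$ makes each term $\lesssim R^{-1000}$ unless this Minkowski sum meets $B(0,O((\log R)/T))$.

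Next I would perform the diagonal reduction via parabola geometry. Parametrize each $\tau_{j,l}$ by the $\xi$-center $a_l$ of its base on $\mathbb{P}^1$ and each $\tau_{j,l}'$ by $b_l$, so the rectangles are centered at $(a_l,a_l^2)$ and $(b_l,b_l^2)$. The two coordinate conditions become $a_1-a_2+b_1-b_2=O((\log R)/T)$ and $(a_1^2-a_2^2)+(b_1^2-b_2^2)=O((\log R)/T)$; substituting the first into the second yields $(a_1+a_2-b_1-b_2)(a_1-a_2)=O((\log R)/T)$. Non-adjacency, together with $a_l\in\tau_k$ and $b_l\in\tau_k'$, forces $|a_1+a_2-b_1-b_2|\gtrsim\text{dist}(\tau_k,\tau_k')$, hence
\[ |a_1-a_2|\lesssim\frac{\log R}{T\cdot\text{dist}(\tau_k,\tau_k')}<\frac{R_j^{-1/2}}{10} \]
by the hypothesis $T>10(\log R)R_j^{1/2}/\text{dist}(\tau_k,\tau_k')$; since distinct $\tau_j$'s are $R_j^{-1/2}$-spaced, this forces $\tau_{j,1}=\tau_{j,2}$. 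Feeding back, $|b_1-b_2|\lesssim(\log R)/T<R_j^{-1/2}/10$ forces $\tau_{j,1}'=\tau_{j,2}'$. So up to $R^{-1000}$,
\[ \int|f_{\tau_k}f_{\tau_k'}|^2\omega_{Q_T}\lesssim\sum_{\substack{\tau_j\subset\tau_k\\ \tau_j'\subset\tau_k'}}\int|f_{\tau_j}|^2|f_{\tau_j'}|^2\omega_{Q_T}. \]

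Finally, for each diagonal pair I would prove the localized bilinear $L^2$ bound
\[ \int|f_{\tau_j}|^2|f_{\tau_j'}|^2\omega_{Q_T}\lesssim\frac{(\log R)^c T^{-2}}{\text{dist}(\tau_k,\tau_k')}\int|f_{\tau_j}|^2\omega_{Q_T}\int|f_{\tau_j'}|^2\omega_{Q_T} \]
by setting $u_j=f_{\tau_j}\omega_{Q_T}^{1/2}$, $u_{j'}=f_{\tau_j'}\omega_{Q_T}^{1/2}$, so that on the essential support of $\omega_{Q_T}$ one has $|u_ju_{j'}|^2\sim|f_{\tau_j}|^2|f_{\tau_j'}|^2\omega_{Q_T}$ and $\|u_j\|_2^2\sim\int|f_{\tau_j}|^2\omega_{Q_T}$. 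The Fourier supports of $u_j,u_{j'}$ are the $O((\log R)/T)$-thickenings of $\tau_j,\tau_{j'}$; the standard bound $\|\hat u_j*\hat u_{j'}\|_2^2\le\sup_\eta|\text{supp}(\hat u_j)\cap(\eta-\text{supp}(\hat u_{j'}))|\cdot\|\hat u_j\|_2^2\|\hat u_{j'}\|_2^2$ together with the parabola Jacobian $|2(a-b)|\sim\text{dist}(\tau_k,\tau_k')$ on the sum-map $(a,b)\mapsto(a+b,a^2+b^2)$ bounds the intersection by $\lesssim((\log R)/T)^2/\text{dist}(\tau_k,\tau_k')$, which is precisely the factor needed. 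Summing over all pairs $(\tau_j,\tau_j')$ factorizes the right-hand side into the desired product. The main obstacle is this last step: choosing the weight $\omega_{Q_T}$ carefully enough that its various powers behave interchangeably up to $(\log R)^c$ losses while $\omega_{Q_T}^{1/2}$ remains Fourier-localized, and tracking the Jacobian computation with enough precision to produce the exact $((\log R)/T)^2/\text{dist}(\tau_k,\tau_k')$ area bound; by contrast, the diagonal reduction is essentially routine once the hypothesis $T>10(\log R)R_j^{1/2}/\text{dist}(\tau_k,\tau_k')$ is unwound.
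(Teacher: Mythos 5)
Your plan is correct and reaches the result, but the key bilinear $L^2$ step is handled by a genuinely different method than the paper's. You both set up the same Gaussian weight and perform the same Córdoba-style diagonal reduction: expanding the quadruple sum, using the small Fourier support of the weight, and using the parabola's curvature plus $T>10(\log R)R_j^{1/2}/\text{dist}(\tau_k,\tau_k')$ to force $\tau_{j,1}=\tau_{j,2}$ and $\tau_{j,1}'=\tau_{j,2}'$ (the paper defers the quantitative check, for general $C^2$ curves, to its appendix, but the estimate is exactly yours). Where you diverge is the diagonal term $\int|f_{\tau_j}f_{\tau_j'}|^2\omega$. You run the Tao--Vargas--Vega argument on the frequency side: write $u_j=f_{\tau_j}\omega^{1/2}$, apply Plancherel and Cauchy--Schwarz to $\|\widehat u_j*\widehat u_{j'}\|_2^2$, and bound $\sup_\eta|\text{supp}(\widehat u_j)\cap(\eta-\text{supp}(\widehat u_{j'}))|\lesssim((\log R)/T)^2/\text{dist}(\tau_k,\tau_k')$ using the transversality of the two thickened $\tau_j$-slabs. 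The paper instead argues in physical space: it tiles $f_{\tau_j}$ into wave packets on translates $T_{\tau_j}$ of $\tau_j^*$, bounds $|T_{\tau_j}\cap T_{\tau_j'}\cap\log R\,Q_T|\lesssim R_j/\text{dist}(\tau_k,\tau_k')$ by the angle between the tubes, compares against $|T_{\tau_j}\cap\log R\,Q_T|\sim R_j^{1/2}\log R\,T$, and then converts $\sum_{T_{\tau_j}}c_{T_{\tau_j}}\chi_{T_{\tau_j}}$ back to $|f_{\tau_j}|^2*\psi_{\tau_j}$ via the locally constant property. These are the two standard dual proofs of the local bilinear $L^2$ estimate, so neither buys much over the other here, though yours is closer to the usual presentation in restriction theory while the paper's physical-space version meshes more naturally with the wave-packet language already running through the paper. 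One small caveat: your stipulation that $\omega_{Q_T}^a\sim\omega_{Q_T}$ for $a\in[\tfrac12,2]$ up to $(\log R)^c$ losses cannot hold globally for a Gaussian (it would force $\omega_{Q_T}$ bounded below), but it is also unnecessary --- since the theorem only asks for \emph{some} Gaussian weight with the stated Fourier localization, you may simply start from $\int_{Q_T}|f_{\tau_k}f_{\tau_k'}|^2\le\int|f_{\tau_k}f_{\tau_k'}|^2\omega^2$, set $u_j=f_{\tau_j}\omega$, and let $\omega^2$ be the output weight; no comparability between powers is needed.
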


\begin{proof}
	Let $\phi_{Q_T} $be a Gaussian bump function adapted to $ Q_T$  as in Definition~\ref{Gaussian}. Then the Fourier transform $\widehat{\phi}_{Q_T}$ is essentially supported in a ball of radius $2\log R\,  T^{-1}$. 
	\begin{align*}
		\int_{Q_T} |f_{\tau_k} f_{\tau_k'}|^2 &\lesssim \int |\sum_{\tau_j\subset\tau_k, \tau_j'\subset\tau_k'} f_{\tau_j}f_{\tau_j'}|^2 \phi_{Q_T} \\
		&\lesssim \int \sum_{\tau_j\subset\tau_k, \tau_j'\subset\tau_k'} |f_{\tau_j}f_{\tau_j'}|^2 \phi_{Q_T}  + R^{-1000}
	\end{align*}
The reason for the last inequality is that for a fixed pair $(\tau_j, \tau_j')$, the number of pairs $(\tau_j'', \tau_j''')$ such that  
\begin{equation}\label{cordoba}
\tau_{j} + \tau_{j}'  + B_{2\log R T^{-1}} \bigcap \tau_j''+\tau_j''' +B_{2\log R T^{-1}} \neq \emptyset
\end{equation}
is at most $O(1)$. Here we used the fact that $T\geq 10 (\log R) \, R_{j}^{1/2}/\text{dist}(\tau_k, \tau_k')$. For more details checking \eqref{cordoba}, see the appendix.

It suffices to show that 
$$\int |f_{\tau_j}f_{\tau_j'}|^2 \phi_{Q_T}\lesssim (\log R)^2 T^{-2}\text{dist}(\tau_k, \tau_k') \int |f_{\tau_j}|^2 \w_{Q_T} \int |f_{\tau_j}|^2 \w_{Q_T}.$$

For a translate $T_{\tau_j}$ of $\tau_j^*$ and a translate   $T_{\tau_j}$ of $\tau_j'^{*}$,  let $c_{T_{\tau_j}}=\max_{x\in T_{\tau_j}} |f_{\tau_j}|^2(x)$ and $c_{T_{\tau_j'}}=\max_{x\in T_{\tau_j'}} |f_{\tau_j'}|^2(x)$.

Since $T_{\tau_j}$ and $T_{\tau_j'}$ have angle $\sim \text{dist}(\tau_k, \tau_k')$, we have $$|T_{\tau_j}\cap T_{\tau_j'}\cap \log R\, Q_T|\leq R_{j}/\text{dist}(\tau_k, \tau_k'),$$
while $T_{\tau_j}\cap \log R\, Q_T\sim R_{j}^{1/2} \log R \, T$. 
\begin{align*}
	\int |f_{\tau_j}f_{\tau_j'}|^2 \w_{Q_T}&\leq \int_{\log R\, Q_T} |f_{\tau_j} f_{\tau_j'}|^2 + R^{-1000}\\
	&\leq \int_{\log R\, Q_T} \sum_{T_{\tau_j}} \sum_{T_{\tau_j'}} c_{T_{\tau_j}}\chi_{T_{\tau_j}}\cdot  c_{T_{\tau_j'}}\chi_{T_{\tau_j'}}  +R^{-1000}\\
	&\lesssim \frac{(\log R)^2 T^{-2}}{ \text{dist}(\tau_k, \tau_k')} \int_{\log R \, Q_T} \sum_{T_{\tau_j}} c_{T_{\tau_j}} \chi_{T_{\tau_j}} \cdot \int_{\log R \, Q_T} \sum_{T_{\tau_j'}} c_{T_{\tau_j'}} \chi_{T_{\tau_j'}} +R^{-1000}
	\end{align*}

The next step is to show that 
\begin{equation}\label{eq: lcgaussian}
\int_{\log R Q_T} \sum_{T_{\tau_j}} c_{T_{\tau_j}}\chi_{T_{\tau_j}} \lesssim \int |f_{\tau_j}|^2 w_{Q_T} +R^{-1000}.
\end{equation}
The Fourier transform of $|f_{\tau_j}|^2$ is supported on $\tilde{\tau}_{j,0}=\tau_j+(-\tau_j)$, which is approximately an $R_j^{-1/2}\times R_j^{-1}$--rectangle. Let $\sum_{\tilde{\tau_j}} \phi_{\tilde{\tau_j}}$ be a Gaussian partition of unity for $\{\tilde{\tau_j}\}_{\tilde{\tau}_j\parallel \tilde{\tau}_{j,0}}$.  Let $\phi_{\tau_j} = \sum_{\tilde{\tau}_j\subset \log R \, \tilde{\tau}_{j,0}} \phi_{\tilde{\tau}_j}$. Then $|\phi_{\tau_j} - \chi_{\tilde{\tau}_{j,0}}|(\xi)\leq R^{-1000}$ for $\xi\in \tilde{\tau}_{j, 0}$ and $|f_{\tau_j}|^2 =|f_{\tau_j}|^2 \ast \widehat{\phi}_{\tau_j} +O(R^{-1000})$. 

Let $\psi_{\tau_j} (x) = \max_{y \in x+T_{\tau_j}-T_{\tau_j}} |\widehat{\phi}_{\tau_j}|$, then
$$\sum_{T_{\tau_j}} c_{T_{\tau_j}} \chi_{T_{\tau_j}} \leq |f_{\tau_j}|^2 \ast \psi_{\tau_j}.$$
 
We finish the proof since  $\psi_{\tau_j}\ast \chi_{\log R\, Q_T}\lesssim  (\log R)^2 w_{Q_T} +R^{-1000}$ for a Gaussian bump function $w_{Q_T}$ localized at $\log R \, Q_T$. 

\end{proof}

\begin{proof}[Proof of Proposition \ref{mainp}]

It suffices to bound $|U_\a\cap \Omega_k|$ for $k=1,\ldots,N-1$ and $|U_\a\cap L|$ since there are $\lesssim \log R$ of these sets and $|U_\a\cap Q_R|\le \sum\limits_{k=1}^{N-1}|U_\a\cap\Omega_k|+|U_\a\cap L|$.

For $k=1,\ldots,N-1$, by Lemma \ref{ftofk}, if $x\in U_\a\cap \Omega_k$, then
\[
 \max_{\tau, \tau' \textrm{ non-adj}}|f_\tau(x)f_{\tau'}(x)| \le  \max_{\tau, \tau' \textrm{ non-adj}}|f_{k+1,\tau}(x)f_{\tau'}(x)|+ (\log R)^{-10} \alpha |f_{\tau'}(x)| +R^{-500} ,
\]
where $f_{k+1,\tau}=\underset{\tau_k\subset\tau}{\sum}f_{k+1,\tau_k}$. By the definition of $U_\a$ (Definition \ref{defUa}), $|f_{\tau'}(x)| \le (\log R)^9 \alpha$, and so 
\begin{align*}
\max_{\tau, \tau' \textrm{ non-adj}}|f_\tau(x)f_{\tau'}(x)|\le & \max_{\tau, \tau' \textrm{ non-adj}}|f_{k+1,\tau}(x)f_{\tau'}(x)|+ (\log R)^{-1} \alpha^2 +R^{-500} 
\\
 \le &\max_{\tau,\tau' \textrm{ non-adj}} |f_{k+1, \tau}(x) f_{k+1, \tau'}(x)|
 + (\log R)^{-1} \alpha^2 \\
 &\qquad + (\log R)^{-10} \alpha |f_{k+1, \tau}(x)| + 2 R^{-500}. 
 \end{align*}

Using the definition of $U_\a$ as above as well as Lemma \ref{ftofk}, it follows that $|f_{k+1, \tau}(x)| \le 2 (\log R)^9 \alpha$, and so all together

$$ \max_{\tau, \tau' \textrm{ non-adj}}|f_\tau(x)f_{\tau'}(x)| \le  \max_{\tau, \tau' \textrm{ non-adj}}|f_{k+1,\tau}(x)f_{k+1, \tau'}(x)|+ 3 (\log R)^{-1} \alpha^2 + 2R^{-500} .$$

The $R^{-500}$ error term is negligible given our normalization, as explained in Note \ref{normalization}.  Since $x \in U_\a$, $ \max_{\tau, \tau' \textrm{ non-adj}}|f_\tau(x)f_{\tau'}(x)| \sim \a^2$, and so  $\max_{\tau, \tau' \textrm{ non-adj}}|f_{k+1,\tau}(x)f_{k+1, \tau'}(x)| \sim \a^2$ as well.  Therefore
\begin{equation}\label{eq: removek}
    \a^4|U_\a\cap\Omega_k| \lesssim \|\max_{\tau, \tau' \textrm{ non-adj}}|f_{k+1,\tau}f_{k+1,\tau'}|^{1/2}\|_{L^4(U_\a\cap\Omega_k)}^4
    \end{equation}
where $\tau$ and $\tau'$ are nonadjacent $\sim (\log R)^{-6}\times (\log R)^{-12}$ rectangles. We wish to apply the bilinear restriction theorem above, but the functions $f_{k+1,\tau_k}$ are only essentially supported in $\sim \tau_k$. This just means that we have an error term of $R^{-1000}$ which is negligible given our normalization, as explained in Note \ref{normalization}. 

For each $Q_k \subset \Omega_k$, 
\begin{align*}
\|\max_{\tau,\tau'}|f_{k+1,\tau}f_{k+1,\tau'}|^{1/2}\|_{L^4(Q_k)}^4&\le \sum_{\substack{\tau,\tau'\\\textrm{non-adj}}}\int_{Q_k}|f_{k+1,\tau}f_{k+1,\tau'}|^2\\
\text{(Theorem \ref{locbilTS}) }\qquad
    &\lesssim |Q_k|^{-1}\big(\int\sum_{\tau_k}|f_{k+1,\tau_k}|^2\w_{\tilde{Q}_k}\big)^2+R^{-1000}\\
    &\lesssim |Q_k|^{-1}\big(\int g_k\w_{\tilde{Q}_k}\big)^2+R^{-1000}|Q_k|+R^{-1000}
\end{align*}
where $w_{\tilde{Q}_k}$ is a weight function localized to $(\log R)^8Q_k$ and the final inequality follows from \eqref{eq: lc3201}  and \eqref{eq: lc3202} in the proof of Lemma \ref{hkbound}. 

By Lemma \ref{high} and the decay properties of $\w_{\tilde{Q}_k}$, 
\begin{align*} 
\int g_k\w_{\tilde{Q}_k}&\lesssim (\log R)^{16}\|g_k\|_{L^\infty((\log R)^9\,Q_k)}|Q_k|+R^{-500}\\
&\lesssim (\log R)^{17}\|g_k^h\|_{L^\infty((\log R)^9\,Q_k)}|Q_k|+R^{-500}\\
&\lesssim (\log R)^{18}\|g_k^h\|_{L^1(W_{Q_k})}+R^{-500}\\
&\lesssim (\log R)^{19}\|g_k^h\|_{L^2(W_{Q_k})}|Q_k|^{1/2}+R^{-500}
\end{align*}
where $W_{Q_k}$ is a Gaussian weight function localized to $\sim (\log R)^9 Q_k$  coming from the locally constant property (see \eqref{eq: lcgaussian}). Use this in the previous displayed math and add up the contributions from each $Q_k$ to obtain
\begin{equation}\label{here} \|\max_{\tau,\tau'}|f_{k,\tau}f_{k,\tau'}|^{1/2}\|_{L^4(\Omega_k)}^4\lesssim  (\log R)^{16}\int \big|g_k^h\big|^2\w_{\Omega_k} \end{equation}
where $\w_{\Omega_k}=\sum_{Q_k}W_{{Q}_k}$. Note that $\w_{\Omega_k}\lesssim 1$ and by the high lemma (Lemma \ref{hilem}),
\begin{align*} 
\int |g_k^h|^2\w_{\Omega_k} &\lesssim (\log R)^{\tilde{c}''}\int\sum_{\tau_k}|f_{k+1,\tau_k}|^4+R^{-1000} .
\end{align*} 

 Then $f_{k+1,\tau_k}=\underset{\tau_{k+1}\subset\tau_k}{\sum}f_{k+1,\tau_{k+1}}$, so 

$$\int\sum_{\tau_k}|f_{k+1,\tau_k}|^4 \le \frac{R_{k+1}^{3/2}}{R_{k}^{3/2}}\int\sum_{\tau_{k+1}}|f_{k+1,\tau_{k+1}}|^4. $$

By Lemma~\ref{propfk}, we have $\|  f_{k+1, \tau_{k+1}} \|_{L^\infty} \le C (\log R)^2 \lambda + R^{-1000}$.  The $R^{-1000}$ error term is negligible as explained in Note \ref{normalization}, and so we essentially have

$$\int\sum_{\tau_k}|f_{k+1,\tau_k}|^4  \le (\log R)^4 \lambda^2\int\sum_{\tau_{k+1}}|f_{k+1,\tau_{k+1}}|^2.$$

Finally, we have to carefully unwind the definition of $f_k$ to relate this last quantity to the original $f_\theta$:
\begin{equation}\label{eq: l2orth}
\int\sum_{\tau_{k+1}}|f_{k+1,\tau_{k+1}}|^2 \leq \sum_{\theta} \int |f_{\theta}|^2 + R^{-500}.
\end{equation}

 First we recall by Lemma \ref{propfk} that $|f_{k+1, \tau_{k+1}}(x)| \le |f_{k+2, \tau_{k+1}}(x)|$, and so
\begin{equation} \label{unwind1} \sum_{\tau_{k+1}}\int|f_{k+1,\tau_{k+1}}|^2 \le \sum_{\tau_{k+1}}\int|f_{k+2,\tau_{k+1}}|^2 \end{equation}
Next, by Definition \ref{fk}, $f_{k+2, \tau_{k+1}} = \sum_{\tau_{k+2} \subset \tau_{k+1}} f_{k+2, \tau_{k+2}}$, and so 
\begin{equation} \label{unwind2} \sum_{\tau_{k+1}}\int|f_{k+1,\tau_{k+1}}|^2\le \sum_{\tau_{k+1}}\int|\sum_{\tau_{k+2}\subset\tau_{k+1}}f_{k+2,\tau_{k+2}}|^2.  \end{equation}

By Lemma~\ref{propfk}, the Fourier transform of $f_{k+2,\tau_{k+2}}$ is essentially supported in $(1+(\log R)^{-8})\tau_{k+2}$. 
Since distinct $\tau_{k+2}$ and $\tau_{k+2}'$ are $\ge\frac{1}{2}R_{k+3}^{-1/2}$-separated, these sets are disjoint. By orthogonality, we get 
\begin{equation} \label{unwind3} \sum_{\tau_{k+1}}\int|\sum_{\tau_{k+2}\subset\tau_{k+1}}f_{k+2,\tau_{k+2}}|^2 \le \sum_{\tau_{k+2}}\int|f_{k+2,\tau_{k+2}}|^2  + R^{-500}. \end{equation}

Now we repeat the reasoning in inequalities \eqref{unwind1}, \eqref{unwind2}, and \eqref{unwind3} at many scales to conclude

\begin{align*}     
  \sum_{\tau_{k+1}}\int|\sum_{\tau_{k+2}\subset\tau_{k+1}}f_{k+2,\tau_{k+2}}|^2  &\le \sum_{\tau_{k+2}}\int|f_{k+2,\tau_{k+2}}|^2 \\
    &\le \sum_{\tau_{k+2}}\int|f_{k+3,\tau_{k+2}}|^2  \\
    &\le \sum_{\tau_{k+3}}\int|f_{k+3,\tau_{k+3}}|^2  \\
    &\cdots\\
    &\le \sum_\theta\int|f_{N,\theta}|^2  \\
    &\le \sum_{\theta}\int |f_\theta|^2. 
\end{align*}
In the above sequence of inequalities, we neglected to include an $R^{-500}$ added error term in each step due to the difference between ``essential support" and ``actual support."  These error terms are all negligible according to Note \ref{normalization}.  

The conclusion of this argument is that for $k=1,\ldots,N-1$,
\[\a^4|U_\a\cap \Omega_k|\lesssim (\log R)^{\tilde{c}'}\frac{r^2}{\a^2}\sum_\theta\|f_\theta\|_{L^2(\mathbb{R}^2)}^2 .\]

Finally we check that this indeed gives the conclusion of Proposition \ref{mainp}.  Recall that $g_N(x)=\sum_\theta|f_\theta|^2*\p_{\tilde{T}_\theta}$. By 
Lemma \ref{P}, $\| \p_{\tilde{T}_\theta} \|_{L^1} \le (\log R)^c$ . Thus for each $\theta$ and $x\in (\log R)^2 Q_N$ (where $Q_N\cap \Omega_N \neq \emptyset$), 
\[   |f_\theta|^2*\p_{\tilde{T}_\theta}(x)\lesssim (\log R)^{\tilde{c}}\|f_\theta\|_{L^\infty( \mathbb{R}^2 )}^2 +R^{-1000}. \]
It follows that $r\lesssim (\log R)^{\tilde{c}}\sum_\theta\|f_\theta\|_{L^\infty(\mathbb{R}^2)}^2 +R^{-1000}$. Plugging this in gives the conclusion of Proposition \ref{mainp}.  

Finally, it remains to bound $|U_\a \cap L|$. The first step is going from $f$ to $f_1$ using Lemma \ref{ftofk} (the argument for $\Omega_1$ in \eqref{eq: removek} holds for $L$ as well):
\begin{align*}
    \a^6|U_\a\cap L|&\lesssim  \int_{U_\a\cap L}\max_{\tau,\tau'}|f_{1,\tau}f_{1,\tau'}|^3+R^{-1000}\\
     & \lesssim (\log R)^{c} \int_{U_{\alpha}\cap L} ( \sum_{\tau_1} |f_{1, \tau_1}|^2)^3 +R^{-1000} \\
\text{( Lemma~\ref{propfk} )} \quad   & \lesssim (\log R)^{c} \int_{U_{\alpha}\cap L} ( \sum_{\tau_1} |f_{2, \tau_1}|^2)^3 +R^{-1000} 
    \\
    &\le  (\log R)^{c}  \int_{U_\a\cap L}g_1^2(\sum_{\tau_1} |f_{2, \tau_1}|^2)+R^{-1000}
\end{align*}
where  the last  inequality is due to \eqref{eq: lc3201}. Then by the definition of $L$, 
\[ \|g_1\|_{L^\infty(U_\a\cap L)}\lesssim Cr+NR^{-500}.  \]
Finally, by \eqref{eq: l2orth}, 
\[
\int \sum_{\tau_1} |f_{2, \tau_1}|^2 \leq \int \sum_{\theta} |f_{\theta}|^2 +R^{-500}.
\]

\end{proof}

\section{Proof of Theorem \ref{Main} -- the general case \label{remove}}

In the last section, we proved Proposition \ref{mainp}, which establishes our main theorem in the broad, well-spaced case.  In this section, we prove Theorem \ref{Main} in full generality.  We use Proposition \ref{mainp} as a black box, and then we remove the broad hypothesis by using a broad/narrow analysis, and we remove the well-spaced hypothesis by a random sampling argument.  

\subsection{Removing the broad hypothesis \label{remove1}}

The following proposition uses a broad/narrow analysis to prove an upper bound for $\|f\|_{L^6(X)}$ using Proposition \ref{mainp}.

\begin{prop}\label{mainstronger} There exist $c,C\in(0,\infty)$ such that for all well-spaced collections $\Theta$ and $f\in\mc{S}$ with Fourier support in $\underset{\theta\subset\Theta}{\cup}\theta$,
 \[  \|f\|_{L^6(X)}^6\le C(\log R)^c (\sum_{\theta}\|f_\theta\|_{L^\infty(\mathbb{R}^2)}^2)^2\sum_\theta \|f_\theta\|_{L^2(\mathbb{R}^2)}^2.\]
\end{prop}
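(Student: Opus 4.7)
The plan is induction on $R$, with Proposition~\ref{mainp} handling the broad case and parabolic rescaling plus the inductive hypothesis handling the narrow case. Let $D(R)$ denote the best constant in the claimed inequality over all well-spaced $f$ at scale $R$; the base case (say $R\le$ an absolute constant) is immediate since only $O(1)$ many $\theta$'s are involved.

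I would begin with dyadic pigeonholing in $|f|\sim\alpha$ (losing a $\log R$ factor): since $|f|$ is essentially supported in $[R^{-C},R^{C}]$ with rapid decay outside, it suffices to bound $\alpha^6 |E_\alpha\cap X|$ where $E_\alpha=\{|f|\sim\alpha\}$. Let $\tau$ be the coarse $(\log R)^{-6}\times(\log R)^{-12}$ rectangles from Definition~\ref{defUa}. Split $E_\alpha=E_\alpha^b\sqcup E_\alpha^n$ where on $E_\alpha^b$ one has $(\log R)^{10}\max_{\tau,\tau'\text{ non-adj}}|f_\tau f_{\tau'}|^{1/2}(x)\ge|f(x)|$. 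A further dyadic pigeonhole in $(\sum_\tau |f_\tau|^6)^{1/6}$ places almost all of $E_\alpha^b$ inside $U_{\alpha'}\cap X$ for some $\alpha'\sim\alpha$, so Proposition~\ref{mainp} directly yields the bound; the exceptional ``very large $\ell^6$'' points have some $|f_{\tau_0}|\gg\alpha$ and can be absorbed into the narrow case.

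On $E_\alpha^n$, the bilinear smallness forces $|f(x)|\lesssim|f_{\tau_0(x)}(x)|$ with $O(1)$ constant, where $\tau_0(x)$ is the rectangle maximizing $|f_\tau(x)|$: non-adjacent $\tau'$ satisfy $|f_{\tau'}(x)|\le(\log R)^{-14}|f(x)|$ and hence sum to $\ll|f(x)|$, and only $O(1)$ adjacent $\tau$'s remain. Pigeonholing $\tau_0$ partitions $E_\alpha^n=\bigcup_{\tau_0}E_{\alpha,\tau_0}^n$, giving
\[\int_{E_\alpha^n}|f|^6\lesssim\sum_{\tau_0}\int|f_{\tau_0}|^6.\]
For each $\tau_0$, parabolic rescaling sends $f_{\tau_0}$ to a Schwartz function at scale $R'=R(\log R)^{-12}$, and well-spacing is preserved because the scales $R_k=(\log R)^{12k}$ rescale to $R_{k-1}'=(\log R')^{12(k-1)}$ up to a multiplicative factor $1+O(\log\log R/\log R)$. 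Applying the inductive hypothesis and summing via $\sum_{\tau_0}a_{\tau_0}^2 b_{\tau_0}\le(\sum_{\tau_0} a_{\tau_0})^2(\sum_{\tau_0} b_{\tau_0})$ — valid since $\max\le\sum$ for non-negative quantities, with $a_{\tau_0}=\sum_{\theta\subset\tau_0}\|f_\theta\|_\infty^2$ and analogously for $b_{\tau_0}=\sum_{\theta\subset\tau_0}\|f_\theta\|_2^2$ — yields a recurrence of the shape
\[ D(R)\le C_{\mathrm{broad}}(\log R)^{c}+C_{\mathrm{narrow}}\,D(R(\log R)^{-12}).\]

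The main obstacle is closing this recurrence to a $(\log R)^{c'}$ bound. A naive bound gives $C_{\mathrm{narrow}}$ as an absolute constant larger than $1$ (from the $O(1)$ adjacent $\tau$'s raised to the sixth power), and iterating $N\sim\log R/\log\log R$ times then produces a $C_{\mathrm{narrow}}^{N}$ blowup that exceeds any polynomial of $\log R$. To avoid this, one must effectively reduce $C_{\mathrm{narrow}}$ to $1+O(\log\log R/\log R)$ per step — for instance by sharper pigeonholing so that $|f|$ equals $|f_{\tau_0}|$ up to an $o(1)$ relative error on each piece of the partition, and by carefully controlling the overlap multiplicity when summing over $\tau_0$. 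A secondary technical point is verifying that the parabolic rescaling preserves the spacing conditions at every intermediate scale $R_k$, which should follow since $\log R'/\log R=1-O(\log\log R/\log R)$.
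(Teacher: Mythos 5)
Your high-level plan is the same as the paper's: broad/narrow decomposition, apply Proposition~\ref{mainp} on the broad set, and parabolic rescaling on the narrow set, with the realization that the narrow step must be iterated $N\sim\log R/\log\log R$ times and therefore must incur only a $1+O(1/\log R)$ multiplicative loss per step. You correctly identify this constant control as \emph{the} obstacle, but your proposal does not actually resolve it --- ``sharper pigeonholing so that $|f|$ equals $|f_{\tau_0}|$ up to $o(1)$ relative error'' and ``carefully controlling the overlap multiplicity'' name the problem rather than solve it. As stated, your narrow estimate $|f(x)|\lesssim|f_{\tau_0(x)}(x)|$ (with $\tau_0$ the maximizing single cap) has a constant like $3$ from the two adjacent caps, which is fatal after raising to the sixth power and iterating $N$ times.

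The paper's resolution has two ingredients you are missing. First, the Narrow Lemma (Lemma~\ref{narrow}): if $|f_{\tilde\tau_k}(x)|$ exceeds the bilinear quantity by the stated threshold, then $|f_{\tilde\tau_k}(x)|\le(1+\tfrac{1}{\log R})|f_{\tilde\tau_{k+1}}(x)|$ where $\tilde\tau_{k+1}$ is the union of the maximizing $\tau_{k+1}^{*}$ together with its two neighbors --- grouping the three adjacent caps into a single arc is what brings the constant down to $1+1/\log R$ rather than an absolute constant $>1$. Second, since these triples $\tilde\tau_{k+1}$ overlap, the paper pigeonholes between the two genuinely disjoint coverings $\{\tau_{k+1}\}$ and $\{\tau_{k+1}^{**}\}$ (pairwise disjoint triples) and takes whichever dominates, which costs only another factor $(1+2/\log R)^{6}$; this is the ``overlap multiplicity'' point you gestured at but did not make precise.

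A more minor organizational difference: you phrase the argument as an induction on $R$ giving a recurrence $D(R)\le C_{\mathrm{broad}}(\log R)^c+C_{\mathrm{narrow}}D(R(\log R)^{-12})$ and worry about how rescaled scales align with the canonical ones. The paper avoids this by unrolling the broad/narrow dichotomy across all $N$ scales in one pass, obtaining a sum of $N$ broad terms with cumulative factor $(1+\tfrac{2}{\log R})^{6k}=O(1)$, then pigeonholing (losing one $\log R$) to a single dominant scale, and handling that one broad term via Lemma~\ref{rescaling}, which re-runs the proof of Proposition~\ref{mainp} at the rescaled scale while keeping $(\log R)^c$ (not $(\log R')^c$) as the loss. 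This sidesteps the need to check that the rescaled scales match a canonical ladder exactly, and also avoids any circularity concern in invoking an inductive hypothesis that the rescaled $f_{\tau_k^*}$ might not literally satisfy. The two organizations are roughly equivalent once the constants are tracked, but the scale-by-scale unrolling is cleaner; either way, without the Narrow Lemma and the disjoint-cover pigeonhole your argument does not close.
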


First we prove a few technical lemmas. 

\begin{lemma}[Narrow lemma]\label{narrow}
Suppose that $\tilde{\tau}_k$ is an arc of length $R_k^{-1/2}\le \ell(\tilde{\tau}_k)\le 3R_k^{-1/2}$. Let $\{\tau_{k+1}\}$ be a partition of $\tilde{\tau}_k$ into $R_{k+1}^{-1/2}-$arcs. If $x$ satisfies
\begin{equation}\label{eq: hypox}
|f_{\tilde{\tau}_k}(x)|>\frac{(\log R)^2R_{k+1}^{1/2}}{R_k^{1/2}}\max_{\substack{\tau_{k+1},\tau_{k+1}'\\\textrm{ non-adj}}}|f_{\tau_{k+1}}(x)f_{\tau_{k+1}'}(x)|^{1/2}, 
\end{equation}
 then there exists an arc $\tilde{\tau}_{k+1}$ such that $\ell(\tilde{\tau}_{k+1})=3R_{k+1}^{-1/2}$ and
\[ |f_{\tilde{\tau}_k}(x)|\le (1+\frac{1}{\log R})|f_{\tilde{\tau}_{k+1}}(x)|.\]
\end{lemma}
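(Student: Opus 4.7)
The plan is to exhibit $\tilde\tau_{k+1}$ as the arc of length $3R_{k+1}^{-1/2}$ consisting of the sub-arc on which $|f_{\tau_{k+1}}(x)|$ is largest together with its two neighbors, and then deduce the conclusion by a pointwise triangle inequality argument. Write $f_{\tilde\tau_k}(x) = \sum_{\tau_{k+1}\subset\tilde\tau_k} f_{\tau_{k+1}}(x)$, let $M$ be the number of $\tau_{k+1}$ in the partition (so $M \le 3R_{k+1}^{1/2}/R_k^{1/2}$ by the length hypothesis on $\tilde\tau_k$), set $A = |f_{\tilde\tau_k}(x)|$, and write $a_j = |f_{\tau_{k+1,j}}(x)|$ ordered so that $a_1 = \max_j a_j$. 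Since $M \ge R_{k+1}^{1/2}/R_k^{1/2}$, hypothesis \eqref{eq: hypox} rearranges to
\[
a_j\, a_{j'} \;<\; \frac{9A^2}{(\log R)^4 M^2}
\]
whenever $\tau_{k+1,j}$ and $\tau_{k+1,j'}$ are non-adjacent.

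The next step is to force concentration of the $a_j$ on $\tau_{k+1,1}$ and its neighbors. The trivial bound $A \le \sum_j a_j \le Ma_1$ gives $a_1 \ge A/M$, and a quick dichotomy shows in fact $a_1 \ge 3A/((\log R)^2 M)$: otherwise $A \le Ma_1 < 3A/(\log R)^2$, which is a contradiction once $R$ exceeds a small absolute constant. Plugging this lower bound on $a_1$ back into the bilinear hypothesis above yields, for every $j$ non-adjacent to $1$,
\[
a_j \;<\; \frac{9A^2}{(\log R)^4 M^2 a_1} \;\le\; \frac{3A}{(\log R)^2 M}.
\]

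Finally, take $\tilde\tau_{k+1}$ to be the arc of length $3R_{k+1}^{-1/2}$ consisting of $\tau_{k+1,1}$ and both of its neighbors (translated to one side if $\tau_{k+1,1}$ lies at an endpoint of $\tilde\tau_k$). Then
\[
f_{\tilde\tau_k}(x) - f_{\tilde\tau_{k+1}}(x) \;=\; \sum_{\tau_{k+1}\subset\tilde\tau_k,\ \tau_{k+1}\text{ non-adj.\ to }\tau_{k+1,1}} f_{\tau_{k+1}}(x),
\]
and summing the per-term bound from the previous step gives $|f_{\tilde\tau_k}(x) - f_{\tilde\tau_{k+1}}(x)| \le 3A/(\log R)^2$. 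The reverse triangle inequality then yields $|f_{\tilde\tau_{k+1}}(x)| \ge (1 - 3/(\log R)^2)A$, which rearranges to the desired $A \le (1 + 1/\log R)|f_{\tilde\tau_{k+1}}(x)|$ as soon as $R$ is large enough that $3/(\log R)^2 \le 1/(\log R + 1)$, an easy direct check.

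The argument is entirely elementary and I do not anticipate any genuine obstacle. The only item requiring care is tracking the numerical constants: the error $3/(\log R)^2$ coming from the non-adjacent tail must fit inside the allowed budget $1/\log R$, which is precisely why the prefactor in hypothesis \eqref{eq: hypox} is $(\log R)^2$ and not something weaker.
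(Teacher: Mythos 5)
Your proof is correct and follows essentially the same approach as the paper's: pick the sub-arc maximizing $|f_{\tau_{k+1}}(x)|$ together with its two neighbors, bound the non-adjacent tail by $3|f_{\tilde\tau_k}(x)|/(\log R)^2$ using the bilinear hypothesis, and close with the reverse triangle inequality. One small slip in the write-up: the sentence invoking ``Since $M \ge R_{k+1}^{1/2}/R_k^{1/2}$'' to deduce $a_j a_{j'} < 9A^2/((\log R)^4 M^2)$ should instead cite the upper bound $M \le 3R_{k+1}^{1/2}/R_k^{1/2}$ that you already stated, since that is what gives $R_k/R_{k+1} \le 9/M^2$. The detour through a lower bound on $a_1$ is also unnecessary and even counterproductive: the paper obtains the per-term bound directly from $a_j \le a_1$, hence $a_j \le (a_j a_1)^{1/2} < 3A/((\log R)^2 M)$, whereas replacing the trivial $a_1 \ge A/M$ with the strictly weaker $a_1 \ge 3A/((\log R)^2 M)$ loosens what you get after dividing (you would have obtained the better tail $9A/(\log R)^4$ with the trivial bound). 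In any case $3/(\log R)^2$ fits comfortably inside the $1/\log R$ budget, so the argument is sound.
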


\begin{proof} Write $f_{\tilde{\tau}_k}=\underset{\tau_{k+1}}{\sum} f_{\tau_{k+1}}$ and let $\tau_{k+1}^*$ index a summand satisfying
\[ \max_{\tau_{k+1}\subseteq \tilde{\tau}_k}|f_{\tau_{k+1}}(x)|=|f_{\tau_{k+1}^*}(x)|. \]
For each $\tau_{k+1}$ that is nonadjacent to $\tau_{k+1}^*$,
\[ |f_{\tau_{k+1}}(x)|\le |f_{\tau_{k+1}}(x)f_{\tau_{k+1}^*}(x)|^{1/2}<\frac{R_k^{1/2}}{(\log R)^2R_{k+1}^{1/2}}|f_{\tilde{\tau}_k}(x)| \]
using the hypothesis \eqref{eq: hypox} about $x$ in. Then
\[|f_{\tilde{\tau}_k}(x)-\sum_{\substack{{\tau_{k+1}}\,\,\textrm{non-adj}\\\text{to }{\tau_{k+1}^*}}} f_{\tau_{k+1}}(x)|>\big(1-\#\tau_{k+1}\frac{R_k^{1/2}}{(\log R)^2R_{k+1}^{1/2}}\big)|f(x)|. \]
The number of $\tau_{k+1}$ is bounded by $3R_{k+1}^{1/2}/R_k^{1/2}$. Define $\tilde{\tau}_{k+1}$ to be $(\tau_{k+1}^*)_L\cup\tau_{k+1}^*\cup(\tau_{k+1}^*)_R$ where $(\tau_{k+1}^*)_L$ is the left neighbor of $\tau_{k+1}^*$ and $(\tau_{k+1}^*)_R$ is the right neighbor of $\tau_{k+1}^*$.

\end{proof}

\begin{lemma}[Case 2 in the proof of Proposition~\ref{mainstronger}] \label{rescaling} \label{case2} Suppose that $\tau_k^*$ is an $\sim R_k^{-1/2}\times R_k^{-1}$-arc in a neighborhood of $\mb{P}^1$. Then \[ \int_{ H_{\tau_k^*}}|f_{\tau_k^*}|^6\lesssim (\log R)^c(\sum_{\theta\subset\tau_k^*}\|f_\theta\|_{L^\infty(\mathbb{R}^2)}^2)^2\sum_{\theta\subset\tau_k^*}\|f_\theta\|_{L^2(\mathbb{R}^2)}^2  \]
where
\begin{align*} 
H_{\tau_k^*}&=\{x\in Q_R:|f_{\tau_k^*}(x)|\le  (\log R)^8\underset{\substack{\tau_{k+1},\tau_{k+1}'\\ \textrm{non-adj}}}{\max}|f_{\tau_{k+1}}(x)f_{\tau_{k+1}'}(x)|^{1/2}\\
&\qquad \text{and}\quad (\log R)^{-9}(\sum_{\tau_{k+1}\subset\tau_k^*}|f_{\tau_{k+1}}(x)|^6)^{1/6}\le \max_{\substack{\tau_{k+1},\tau_{k+1}'\subset\tau_k^* \\ \textrm{non-adj}}} |f_{\tau_{k+1}}(x)f_{\tau_{k+1}'}(x)|^{1/2}  \}.  \end{align*}
Here $\tau_{k+1}$ are $R_{k+1}^{-1/2}\times R_{k+1}^{-1}$ rectangles.
\end{lemma}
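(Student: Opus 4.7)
The plan is a parabolic rescaling argument: rescale $f_{\tau_k^*}$ at scale $R_k$ to get a function $\tilde f$ with Fourier support in $\mc{N}_{\tilde R^{-1}}(\mb{P}^1)$ where $\tilde R := R/R_k$, and then invoke Proposition \ref{mainp} for $\tilde f$ in the rescaled picture.

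First I would set up the rescaling. Let $L$ be the affine map (acting on the frequency side) that sends the standard piece of $\mc{N}_{\tilde R^{-1}}(\mb{P}^1)$ onto $\tau_k^*$; concretely $L=\text{diag}(R_k^{-1/2},R_k^{-1})$ after translation/rotation adapted to the position of $\tau_k^*$. Define $\tilde f(y):=|\det L|^{-1}f_{\tau_k^*}(L^{-t}y)=R_k^{3/2}f_{\tau_k^*}(L^{-t}y)$. Then $\tilde f$ is Schwartz with Fourier support in $\mc{N}_{\tilde R^{-1}}(\mb{P}^1)$. Under this rescaling, the original $\theta\subset\tau_k^*$ (of dimensions $R^{-1/2}\times R^{-1}$) become rectangles $\tilde\theta$ of dimensions $\tilde R^{-1/2}\times\tilde R^{-1}$ tiling $\mc{N}_{\tilde R^{-1}}(\mb{P}^1)$, while the $\tau_{k+1}\subset\tau_k^*$ become rectangles $\tilde\tau_1$ at scale $\tilde R_1=R_{k+1}/R_k=(\log R)^{12}$, i.e.\ exactly the scale-$1$ rectangles appearing in the definition of $U_{\tilde\alpha}(\tilde f)$.

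Next I would transfer the hypotheses. The well-spacing of the original collection $\Theta$ at scales $R_{k+1},\ldots,R_{N-1}$ translates to well-spacing of $\{\tilde\theta\}$ at scales $\tilde R_1,\ldots,\tilde R_{N-k-1}$, so $\tilde f$ is well-spaced in the rescaled picture. The set $L(H_{\tau_k^*})\subset L(Q_R)$ consists of points $y$ at which the two conditions defining $H_{\tau_k^*}$ become $|\tilde f(y)|\le(\log R)^8\max_{\tilde\tau_1,\tilde\tau_1'\text{ non-adj}}|\tilde f_{\tilde\tau_1}\tilde f_{\tilde\tau_1'}|^{1/2}(y)$ and $(\log R)^{-9}(\sum_{\tilde\tau_1}|\tilde f_{\tilde\tau_1}|^6)^{1/6}(y)\le\max|\tilde f_{\tilde\tau_1}\tilde f_{\tilde\tau_1'}|^{1/2}(y)$. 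A dyadic pigeonhole in $\tilde\alpha:=\max|\tilde f_{\tilde\tau_1}\tilde f_{\tilde\tau_1'}|^{1/2}(y)$ decomposes $L(H_{\tau_k^*})$ into $O(\log R)$ pieces, each contained in a broad set $U_{\tilde\alpha}(\tilde f)$ on which $|\tilde f|^6\lesssim (\log R)^{48}\tilde\alpha^6$.

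The main obstacle is the shape of the rescaled region: $L(Q_R)$ is an anisotropic rectangle of dimensions $\tilde R R_k^{1/2}\times\tilde R$, not a cube of side $\tilde R$. To handle this I would cover $L(Q_R)$ by $\lesssim R_k^{1/2}$ essentially disjoint cubes $Q_{\tilde R}^{(i)}$ of side $\tilde R$, and for each $i$ multiply $\tilde f$ by a smooth cutoff $\chi^{(i)}$ adapted to $Q_{\tilde R}^{(i)}$ to obtain $\tilde f^{(i)}$. This smears the Fourier support by $\tilde R^{-1}$, still within the tolerance of Proposition \ref{mainp}. Applying Proposition \ref{mainp} to each $\tilde f^{(i)}$ on $Q_{\tilde R}^{(i)}$ and summing over $i$, the key point is that a wave packet of $\tilde f_{\tilde\theta}$ (of dimensions $\tilde R^{1/2}\times\tilde R$) lives in $O(1)$ of the cubes, so $\sum_i\|\tilde f^{(i)}_{\tilde\theta}\|_{L^2}^2\lesssim\|\tilde f_{\tilde\theta}\|_{L^2(\R^2)}^2$ without any $R_k^{1/2}$ loss; combined with $\|\tilde f^{(i)}_{\tilde\theta}\|_\infty\le\|\tilde f_{\tilde\theta}\|_\infty$, summation produces
\[\tilde\alpha^6\,|U_{\tilde\alpha}(\tilde f)\cap L(Q_R)|\lesssim(\log R)^c\Big(\sum_{\tilde\theta}\|\tilde f_{\tilde\theta}\|_\infty^2\Big)^2\sum_{\tilde\theta}\|\tilde f_{\tilde\theta}\|_{L^2}^2.\]
Finally I would sum over the $O(\log R)$ dyadic values of $\tilde\alpha$ to bound $\int_{L(H_{\tau_k^*})}|\tilde f|^6$, and then unscale using $\|f_\theta\|_\infty^2=R_k^{-3}\|\tilde f_{\tilde\theta}\|_\infty^2$, $\|f_\theta\|_{L^2}^2=R_k^{-3/2}\|\tilde f_{\tilde\theta}\|_{L^2}^2$, and $\int|f_{\tau_k^*}|^6\,dx=R_k^{-15/2}\int|\tilde f|^6\,dy$: the $R_k$-factors cancel by the scale invariance of the six-exponent inequality, yielding the claimed bound.
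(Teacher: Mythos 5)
Your proposal follows the same strategy as the paper's proof: parabolic rescaling from scale $R$ on the $\tau_k^*$-cap down to the scale-$\tilde R:=R/R_k$ picture, dyadic pigeonhole over the bilinear level $\tilde\alpha$, and invocation of Proposition~\ref{mainp}, with exactly the same scaling identities ($\|\tilde f_{\tilde\theta}\|_\infty^2 = R_k^3\|f_\theta\|_\infty^2$, $\|\tilde f_{\tilde\theta}\|_{L^2}^2 = R_k^{3/2}\|f_\theta\|_{L^2}^2$, $\int|f_{\tau_k^*}|^6 = R_k^{-15/2}\int|\tilde f|^6$, which indeed make the powers of $R_k$ cancel). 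One small slip in terminology: the map $L$ straightening $\tau_k^*$ onto the standard cap is a shear followed by an anisotropic dilation (the paper's \eqref{affine map}), not a rotation; but since you record the correct Jacobians this does not affect the computation.

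Where you depart from the paper is in the treatment of the anisotropic rescaled domain $L(Q_R)\sim\tilde R R_k^{1/2}\times\tilde R$. The paper sidesteps this by writing ``repeat the proof of Proposition~\ref{mainp},'' implicitly asserting that the multiscale argument runs on a non-cubical bounded region (which it does, since the only role of $Q_R$ in that proof is to be partitioned into cubes at each intermediate scale). You instead tile $L(Q_R)$ by $\lesssim R_k^{1/2}$ cubes of side $\tilde R$, localize $\tilde f$ with cutoffs $\chi^{(i)}$, apply Proposition~\ref{mainp} on each cube as a black box, and sum, using the essential orthogonality $\sum_i\|\tilde f^{(i)}_{\tilde\theta}\|_{L^2}^2\lesssim\|\tilde f_{\tilde\theta}\|_{L^2}^2$ to avoid the $R_k^{1/2}$ loss. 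This is a legitimate alternative and in some ways tidier, since it uses Proposition~\ref{mainp} only as stated. There is one gap to fill in: as written, the set you are trying to control is $U_{\tilde\alpha}(\tilde f)\cap Q^{(i)}_{\tilde R}$, but Proposition~\ref{mainp} applied to $\tilde f^{(i)}=\chi^{(i)}\tilde f$ controls $U_{\tilde\alpha}(\tilde f^{(i)})\cap Q^{(i)}_{\tilde R}$, and the two broad sets need not coincide because $\chi^{(i)}<1$ near the boundary can drop the bilinear quantity for $\tilde f^{(i)}$ below $\tilde\alpha$. This is fixable by taking $\chi^{(i)}\equiv 1$ on the middle half of each cube (with the middle halves covering $L(Q_R)$) and applying Proposition~\ref{mainp} on the enlarged cube $2Q^{(i)}_{\tilde R}$, so that on the region you actually count the two definitions of the broad set agree up to $O(R^{-1000})$ errors, and the $O(1)$ overlap of the enlarged cubes preserves the $L^2$ bookkeeping. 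With that detail added your argument is complete.
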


\begin{proof} Let $(c,c^2)$ be the center of $\tau_k^*\cap \mathbb{P}^1$. Define the affine map
\begin{equation}\label{affine map} \ell(\xi_1,\xi_2)=\big(R_k^{1/2}(\xi_1-c),R_k(\xi_2-2\xi_1c+c^2)\big). 
\end{equation}
Then $\ell(\tau_k^*)$ is contained in an $(R/R_k)^{-1}$-neighborhood of $\mb{P}^1$. The images $\{\ell(\theta)\}_{\theta\subset\tau_k^*}$ have the spacing property at scales $R_{k+1}/R_k,\ldots,R/R_k$. Define the function $h$ as 
\[ \widehat{h}=\widehat{f}\circ\ell^{-1} \]
and note that for each $R_l^{-1/2}\times R_l^{-1}-$rectangle $\tau_l\subset\tau_k^*$,
\[ R_k^{-3/2}h_{\ell(\tau_l)}(\g(x))e^{2\pi i x\cdot(c,c^2)}=f_{\tau_l}(x) \]
where $\ell(\tau_l)$ is approximately a $(R_l/R_k)^{-1/2}\times (R_l/R_k)^{-1}$-rectangle and
\[ \g(x)=\big(\frac{x_1+2cx_2}{R_k^{1/2}},\frac{x_2}{R_k}\big). \] 
In particular,
\begin{equation}\label{particular}\int_{ H_{\tau_k^*}}|f_{\tau_k^*}|^6=R_k^{-9+3/2}\int_{\g( H_{\tau_k^*})}|h_{\ell(\tau_k^*)}(x)|^6. \end{equation}

By dyadic pigeonholing, there exists $\a_{\tau_k^*}>0$ such that
\[ \|h_{\ell(\tau_k^*)}\|_{L^6(\g(H_{\tau_k^*}))}^6\lessapprox \a_{\tau_k^*}^6|\{x\in \g( H_{\tau_k^*}):\max_{\substack{\tau_{k+1},\tau_{k+1}'\subset\tau_k^* \\\textrm{non-adj}}}|h_{\ell(\tau_{k+1})}(x)h_{\ell(\tau_{k+1}')}(x)|^{1/2}\sim\a_{\tau_k^*}\}|.  \]
Repeat the proof of Proposition \ref{mainp} to obtain
\begin{equation}\label{particularii} \|h_{\ell(\tau_k^*)}\|_{L^6(\g(H_{\tau_k^*}))}^6\lesssim (\log R)^c(\sum_{\theta\subset\tau_k^*}\|h_{\ell(\theta)}\|_{L^\infty(\mathbb{R}^2)}^2)^2\sum_{\theta\subset\tau_k^*}\|h_{\ell(\theta)}\|_{L^2(\mathbb{R}^2)}^2. \end{equation}

First observe that 
\[ \sum_{\theta\subset\tau_k^*}\|h_{\ell(\theta)}\|_{L^2(\mathbb{R}^2)}^2\lesssim R_k^{3-3/2}\sum_{\theta\subset\tau_k^*}\|f_\theta\|_{L^2(\mathbb{R}^2)}^2 .\]
Next, note that 
for each $\theta\subset\tau_k$
\[ \|h_{\ell(\theta)}\|_{L^\infty(\mathbb{R}^2)}^2\le R_k^{3}\|f_\theta\|_{L^\infty(\mathbb{R}^2)}^2. \]
These observations combined with (\ref{particular}) and (\ref{particularii}) give the desired conclusion. 
\end{proof}

\begin{proof}[Proof of Proposition \ref{mainstronger}] Define an iteration using a broad/narrow argument. 

\noindent {\bf{Initial step:}}  Define $S_1$ to be the set 
\begin{align} \label{eq: defS}
S_1:=\{x\in X:& |f(x)|\le (\log R)^{8} \max_{\tau,\tau'\, \textrm{non-adj}}|f_{\tau}(x)f_{\tau'}(x)|^{1/2}\quad \text{and}\\
&\quad (\sum_{\tau}|f_{\tau}(x)|^6)^{1/6}\le (\log R)^{9} \max_{\tau,\tau'\, \textrm{non-adj}}|f_{\tau}(x)f_{\tau'}(x)|^{1/2} \}.\nonumber
\end{align} 
 Define $B_1=X\setminus S_1$. Split the integral into 
\begin{align}
    \int_{X}|f|^6&= \int_{S_1}|f|^6+\int_{B_1}|f|^6. 
\end{align}
By the narrow lemma, if $x\in B_1$ satisfies $|f(x)|> (\log R)^8\underset{\tau,\tau' \, \textrm{non-adj}}{\max}|f_{\tau}(x)f_{\tau'}(x)|^{1/2}$, then for $\{\tau^{**}\}$ a collection of pairwise disjoint unions of three consecutive $\tau$, 
\[ |f(x)|\le (1+\frac{1}{\log R})\big(\sum_{\tau^{**}}|f_{\tau^{**}}(x)|^6\big)^{1/6}. \]
Alternatively, $x\in B_1$ satisfies 
\[|f(x)|\le  (\log R)^8\underset{\tau,\tau', \textrm{non-adj}}{\max}|f_{\tau}(x)f_{\tau'}(x)|^{1/2}\]
but 
\[ (\log R)^{-9}(\sum_{\tau}|f_{\tau}(x)|^6)^{1/6}>\underset{\tau,\tau'\, \textrm{non-adj}}{\max}|f_{\tau}(x)f_{\tau'}(x)|^{1/2}    . \] 
Putting this together means that
\begin{align*}  
\int_{B_1}|f|^6 &\le (1+\frac{1}{\log R})^6\int_{B_1}\sum_{\tau^{**}}|f_{\tau^{**}}|^6 +\frac{1}{(\log R)^{6}} \int_{ B_1}\sum_{\tau}|f_{\tau}|^6. \end{align*} 
Let $\{\tau^{*}\}$ denote the collection $\{\tau\}$ if  
\[ \int_{ B_1}\sum_{\tau}|f_{\tau}|^6\ge \int_{B_1}\sum_{\tau^{**}}|f_{\tau^{**}}|^6   \]
and equal $\{\tau^{**}\}$ otherwise. Then
\[\int_{B_1}|f_\tau|^6 \le (1+\frac{2}{\log R})^6\int_{ B_1}\sum_{\tau^{*}}|f_{\tau^{*}}|^6   \]
(this just means we only have one finer scale to keep track of rather than two almost equivalent scales). Summarizing all of the inequalities, conclude that
\begin{align*} 
\|f\|_{L^6(X)}&\le \int_{S_1}|f|^6+(1+\frac{2}{\log R})^6\sum_{\tau^*}\int_{B(1)} |f_{\tau^*}|^6
\end{align*} 
For each $\tau^*$, further decompose $B_1$ into 
\begin{align*} 
S_{\tau^*}=\{x\in B_1:\,\,&|f_{\tau^{*}}(x)|\le (\log R)^8 \max_{ \tau_{2},\tau_{2}'\subset\tau^*\,\textrm{non-adj}}|f_{\tau_{2}}(x)f_{\tau_{2}'}(x)|^{1/2}\quad \text{and}\\
&\quad (\sum_{\tau_{2}\subset\tau^*}|f_{\tau_{2}}(x)|^6)^{1/6}\le (\log R)^9 \max_{\tau_{2},\tau_{2}'\subset\tau^*\,\textrm{non-adj}}|f_{\tau_{2}}(x)f_{\tau_{2}'}(x)|^{1/2}  \} 
\end{align*} 
where $\ell(\tau_2)= R_{2}^{-1/2}$. By analogous reasoning as above, conclude this case with the inequality
\begin{align*}
    \|f\|_{L^6(X)}^6&\le \int_{S_1}|f|^6+(1+\frac{2}{\log R})^6\sum_{\tau^*}\int_{S_{\tau^*}} |f_{\tau^*}|^6+(1+\frac{2}{\log R})^{12}\sum_{\tau^*}\int_{B_{\tau^*}}\sum_{\tau_2^*\subset\tau}|f_{\tau_2}|^6
\end{align*} 
where $B_{\tau^*}=B_1\setminus S_{\tau^*}$.

\noindent {\bf{Step k:}} ($k\ge 2$)  The conclusion of the previous step is 
\begin{align} \label{eq: brnarrow}
\|f\|_{L^6(X)}^6&\le \int_{S_1}|f|^6+(1+\frac{2}{\log R})^6\sum_{\tau^*}\int_{S_{\tau^*}}|f_{\tau^*}|^6 +\cdots+(1+\frac{2}{\log R})^{6(k-1)}\sum_{\tau_{k-1}^{*}}\int_{S_{\tau_{k-1}^{*}}}|f_{\tau_{k-1}^{*}}|^6\\
&\qquad+(1+\frac{2}{\log R})^{6k}\sum_{\tau_{k-1}^{*}}\int_{B_{\tau_{k-1}^{*}}}\sum_{\tau_k^{*}\subset\tau_{k-1}^*}|f_{\tau_k^{*}}|^6  \nonumber
\end{align} 
where for each $\tau_{k-1}^{*}$, if $\tau_{k-1}^{*}\subset\tau_{k-2}^{*}\subset\cdots\subset\tau_2^{*}\subset \tau$, 
\[ B_{\tau_{k-1}^{*}}=B_1\setminus (S_1\cup S_{\tau^*}\cup S_{\tau_2^{*}}\cup\cdots\cup S_{\tau_{k-1}^{*}}). \]

For each $\tau_k^{*}\subset \tau_{k-1}^*$, define $S_{\tau_k^{*}}$ to be the set
\begin{align*}  
\{x\in B_{\tau_{k-1}^{*}}:&|f_{\tau_k^{*}}(x)|\le (\log R)^8 \max_{\substack{\tau_{k+1},\tau_{k+1}'\subset\tau_k^*\\\textrm{non-adj}}}|f_{\tau_{k+1}}(x)f_{\tau_{k+1}'}(x)|^{1/2}\quad \text{and}\\
&\quad (\sum_{\tau_{k+1}\subset\tau_k^*}|f_{\tau_{k+1}}(x)|^6)^{1/6}\le (\log R)^9 \max_{\substack{\tau_{k+1},\tau_{k+1}'\subset\tau_k^* \\ \textrm{non-adj}}}|f_{\tau_{k+1}}(x)f_{\tau_{k+1}'}(x)|^{1/2}  \} 
\end{align*} 
where $\ell(\tau_{k+1})= R_{k+1}^{-1/2}$. Define $B_{\tau_k^*}=B_{\tau_{k-1}^*}\setminus S_{\tau_k^*}$. By analogous arguments as above, conclude that \eqref{eq: brnarrow} holds with $k$ replaced by $k+1$. 

\vspace*{.2in}
Iterate this procedure until Step $N-1$ where $N\sim \frac{\log R}{\log\log R}$ to obtain  inequality \eqref{eq: brnarrow} for $k=n$.
Since there are $\sim(\log R)/(\log\log R)$ terms in the right hand side, it suffices to consider cases where $\|f\|_{L^6(Q_R)}^6$ is bounded by $(\log R)$ times one of the terms on the right hand side. Note that the factors 
$(1+\frac{2}{\log R})^{6k}$ are $\lesssim 1$ for all $k\le N$. 
\[\]
\noindent {\bf{Case 1:}}
\[ \|f\|_{L^6(Q_R)}^6\lesssim (\log R)\int_{S_1}|f|^6 .\]
By the definition of $S_1$ in \eqref{eq: defS}, 
\[ \int_{S_1}|f|^6\le (\log R)^{48}\int_{S_1}\max_{\tau,\tau'}|f_{\tau}f_{\tau'}|^3 . \]
Let $U_s=\{x\in S_1:\max_{\tau,\tau'}|f_{\tau}(x)f_{\tau'}(x)|^{1/2}\le R^{-10}\max_{\theta}\|f_{\theta}\|_{L^{\infty}}\}$. Note that
\begin{align*}  
\int_{U_s}\max_{\tau,\tau'}|f_{\tau}f_{\tau'}|^3&\le  R^{-55} \max_{\theta}\|f_{\theta}\|_{L^{\infty}(\mathbb{R}^2)}\\
\text{(Lemma~\ref{lem: lc})\quad }&\le R^{-55}(\sum_\theta\|f_\theta\|_{L^\infty(\mathbb{R}^2)}^2)^2\int\sum_\theta|f_\theta|^2,
\end{align*} 
which is the right hand side of Proposition \ref{mainstronger}. 

Then $S_1\setminus U_s$ can be partitioned into $\lesssim \log R$ sets $U_\a$ on which $\underset{\tau,\tau'}{\max}|f_\tau f_{\tau'}|^{1/2}\sim \a$ with $R^{-10}\leq \alpha / \max_{\theta}\|f_{\theta}\|_{L^{\infty}(\mathbb{R}^2)}  \leq R$.  By pigeonholing,  
\[ \int_{S_1\setminus U_s}\max_{\tau,\tau'}|f_{\tau}f_{\tau'}|^3\lesssim (\log R)\int_{S_1\cap U_\a}\max_{\tau,\tau'}|f_\tau f_{\tau'}|^3\sim (\log R)\a^6|S_1\cap U_\a|.    \]
Then Proposition \ref{mainp} applies to bound $\a^6|S_1\cap U_\a|$.

\noindent {\bf{Case 2:}}
\[ \|f\|_{L^6(X)}^6\lesssim (\log R)\sum_{\tau_k^*}\int_{S_{\tau_k}^*}|f_{\tau_k^*}|^6    .\]
The sets $S_{\tau_k^*}$ are contained in $H_{\tau_k^*}$ from Lemma \ref{case2}. Using Lemma \ref{case2},
\begin{align*} 
\sum_{\tau_k^*}\int_{S_{\tau_k}^*}|f_{\tau_k^*}|^6  &\lesssim \sum_{\tau_k^*}(\log R)^c(\sum_{\theta\subset\tau_k^*}\|f_\theta\|_{L^\infty(\mathbb{R}^2)}^2)^2\sum_{\theta\subset\tau_k^*}\|f_\theta\|_{L^2(\mathbb{R}^2)}^2\\
&\le (\log R)^c(\sum_{\theta}\|f_\theta\|_{L^\infty(\mathbb{R}^2)}^2)^2\sum_{\theta}\|f_\theta\|_{L^2(\mathbb{R}^2)}^2,
\end{align*} 
so we have the desired conclusion.

\noindent {\bf{Case 3:}}
\[ \|f\|_{L^6(X)}^6 \lesssim (\log R)\sum_{\tau_{N-1}^*}\int_{B_{\tau_{N-1}^*}}\sum_{\tau_N^*\subset\tau_{N-1}^*}|f_{\tau_N^*}|^6.\] 
The size of the $\tau_N^*$ is approximately the size of the $\theta$ (up to a factor of $3$), so we may assume $\tau_N^*=\theta$. Then since the $\ell^6$ norm is bounded by the $\ell^2$ norm, the above inequality implies that 
\[ \a^6|U_\a\cap X|\lesssim (\log R)\int_{U_\a\cap X}(\sum_\theta|f_\theta|^2)^3\lesssim (\log R)(\sum_\theta\|f_\theta\|_{L^\infty(\mathbb{R}^2)}^2)^2\sum_\theta\|f_\theta\|_{L^2(\mathbb{R}^2)}^2. \]

\end{proof}

\vspace*{.2in}

\vspace*{.2in}

\subsection{Removing the well-spaced hypothesis}
 \vspace*{.2in}

This section contains two well-spaced lemmas which we will use to prove Theorem \ref{Main} in the following section. 
\vspace*{.2in}

\begin{lemma} \label{scaleRk} For each $f\in\mc{S}$ with Fourier transform supported in $\mc{N}_{R^{-1}}(\mb{P}^1)$, there exists a collection $\Theta_{N-1}$ that is well-spaced at scale $R_{N-1}$ such that 
\[(1-\frac{1}{\log R})^6\frac{\|f\|_{L^6(Q_R)}^6}{\underset{\theta}{\sum}\|f_\theta\|_{L^2(\mathbb{R}^2)}^2}\le \frac{\|\tilde{f}\|_{L^6(Q_R)}^6}{\underset{\theta}{\sum}\|(\tilde{f})_\theta\|_{L^2(\mathbb{R}^2)}^2} \]
where $\tilde{f}=\underset{\theta\in\Theta_{N-1}}{\sum}f_\theta$. 
\end{lemma}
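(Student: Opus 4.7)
\emph{Approach.} The plan is to choose $\Theta_{N-1}$ via a random shift of the scale-$R_{N-1}$ partition along the $\xi_1$-axis, keeping only those $\theta$'s whose $\xi_1$-projection sits comfortably inside one shifted $\tau_{N-1}$. Jensen's inequality then converts the averaging identity for the random shift into a pointwise lower bound on $|\tilde f|^6$ in terms of $|f|^6$. The crucial fact driving this is translation invariance of the uniform measure: it forces the exclusion probability $p_\theta := \Pr_t(\theta \notin \Theta(t))$ to be the \emph{same} small constant $p = O((\log R)^{-6})$ for every $\theta$, so that $E_t[\tilde f(t)(x)]$ equals exactly $(1-p)f(x)$ pointwise.

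\emph{Construction.} For $t\in[0,R_{N-1}^{-1/2}]$ and $j\in\Z$, set $I_j(t) = [jR_{N-1}^{-1/2}+t,\,(j+1)R_{N-1}^{-1/2}+t]$, and let $I_j'(t)$ be the sub-interval obtained by trimming $I_j(t)$ by $R^{-1/2}/4$ on each side. Let $\tau_{N-1,j}(t)$ denote the natural $\sim R_{N-1}^{-1/2}\times R_{N-1}^{-1}$ rectangle tangent to $\mb{P}^1$ whose $\xi_1$-projection equals $I_j'(t)$, and declare $\theta\in\Theta(t)$ iff its $\xi_1$-projection $J_\theta$ is contained in some $I_j'(t)$. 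By design, distinct $\tau_{N-1,j}(t)$ and $\tau_{N-1,j'}(t)$ have $\xi_1$-projections separated by at least $\tfrac12 R^{-1/2}$, and they cover $\cup_{\theta\in\Theta(t)}\theta$, so $\Theta(t)$ has the spacing property at scale $R_{N-1}$. For any fixed $\theta$, the set $\{t:\theta\notin\Theta(t)\}$ is a single arc on the circle $\R/R_{N-1}^{-1/2}\Z$ of length $|J_\theta|+\tfrac12 R^{-1/2}\lesssim R^{-1/2}$, hence
\[ p := \Pr_t(\theta\notin\Theta(t)) \leq c\,(\log R)^{-6}, \]
independent of $\theta$, and in particular $p\leq 1/\log R$ for $R$ large.

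\emph{Jensen and pigeonholing a shift.} Write $\tilde f(t) = \sum_\theta \mathbf{1}_{\theta\in\Theta(t)}f_\theta$. Linearity of expectation gives $E_t[\tilde f(t)(x)] = (1-p)f(x)$, and since $z\mapsto|z|^6$ is convex on $\C$, Jensen's inequality yields $E_t[|\tilde f(t)(x)|^6] \geq (1-p)^6|f(x)|^6$. Integrating over $Q_R$,
\[ E_t[\|\tilde f(t)\|_{L^6(Q_R)}^6] \geq (1-p)^6\|f\|_{L^6(Q_R)}^6, \qquad E_t\Big[\sum_{\theta\in\Theta(t)}\|f_\theta\|_{L^2}^2\Big] = (1-p)\sum_\theta\|f_\theta\|_{L^2}^2. \]
Setting $A := \sum_\theta\|f_\theta\|_{L^2}^2$, $C := \|f\|_{L^6(Q_R)}^6$, and $B(t),D(t)$ the analogues for $\tilde f(t)$, the two identities combine to give
\[ E_t\bigl[A\,D(t) - (1-p)^6 C\,B(t)\bigr] \geq (1-p)^6 CA - (1-p)^7 CA = p(1-p)^6 CA \geq 0. \]
Hence some shift $t^*$ satisfies $A\,D(t^*) \geq (1-p)^6 C\,B(t^*)$, which is the desired inequality after using $(1-p)^6 \geq (1-\tfrac{1}{\log R})^6$ and setting $\Theta_{N-1}:=\Theta(t^*)$, $\tilde f := \tilde f(t^*)$.

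\emph{Main obstacle.} The only non-routine step is confirming that $p_\theta$ is independent of $\theta$, since otherwise $E_t[\tilde f(t)(x)]$ would fail to be a scalar multiple of $f(x)$ and Jensen would lose the clean pointwise bound. Translation invariance of Lebesgue measure on $\R/R_{N-1}^{-1/2}\Z$ handles this, provided the $\xi_1$-widths of the $\theta$'s are uniform up to a bounded factor; any small fluctuation would merely change the implicit constant in $p = O((\log R)^{-6})$, which has plenty of room below the budget $1/\log R$. Verifying that the tilted parallelograms $\tau_{N-1,j}(t)$ genuinely cover all kept $\theta$'s in their $\xi_1$-strip is routine once one uses that the parabola varies by at most $R_{N-1}^{-1/2}$ across a strip of width $R_{N-1}^{-1/2}$.
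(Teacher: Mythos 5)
Your proposal is correct, but it takes a genuinely different route from the paper. The paper decomposes $f=\sum_{i=0}^{(\log R)^6}H^i$ deterministically, where $H^i$ collects the arcs whose index is congruent to $i$ modulo $(\log R)^6$ (so each $H^i$, and each complementary piece $F=f-H^{i_0}$, is automatically well-spaced at scale $R_{N-1}$). It then pigeonholes an $i_0$ with $\sum_\theta\|(H^{i_0})_\theta\|_2^2\le(\log R)^{-6}\sum_\theta\|f_\theta\|_2^2$ and splits into two cases: if $\|H^{i_0}\|_{L^6}\le(\log R)^{-1}\|f\|_{L^6}$, take $\tilde f=F$ and use the triangle inequality; otherwise take $\tilde f=H^{i_0}$ and the $L^2$-pigeonhole directly produces the bound (with no loss at all). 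Your averaging argument — a random shift of the coarse partition plus Jensen for $z\mapsto|z|^6$ — replaces the two-case analysis with a single probabilistic estimate, which is slicker. What the paper's deterministic argument buys in return is robustness: it never cares about the $\xi_1$-widths of the individual $\theta$'s, only about their linear ordering.

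One caveat in your write-up: the sentence claiming that uniformity of the $\xi_1$-widths ``up to a bounded factor'' would merely change the implicit constant in $p$ is not correct reasoning. The identity $E_t[\tilde f(t)(x)]=(1-p)f(x)$ that feeds into Jensen requires $p_\theta$ to be \emph{exactly} constant in $\theta$. If $p_\theta$ fluctuates, then $E_t[\tilde f(t)(x)]=f(x)-\sum_\theta p_\theta f_\theta(x)$, and the error $\big|\sum_\theta(p_\theta-\bar p)f_\theta(x)\big|$ is controlled only by $\max_\theta|p_\theta-\bar p|\cdot\sum_\theta|f_\theta(x)|$, which is not comparable to $|f(x)|$ and so cannot be absorbed. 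With the standard equal-width partition of $[-1,1]$ into $\xi_1$-intervals of length exactly $R^{-1/2}$ this is moot (and that is the partition the paper uses), but you should state the exact-uniformity hypothesis honestly rather than suggest that bounded-ratio fluctuations are harmless.
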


\begin{proof} Write $\|\cdot\|_2$ to denote $\|\cdot\|_{L^2(\mathbb{R}^2)}$. Label the $R^{-1/2}$-arcs from left to right by $\theta_1,\ldots,\theta_n$. For each $i=0,\ldots,(\log R)^6$, define $H^i$ by 
\[ \sum_{m\in \mathbb{Z}: \, 1\le i+m(\log R)^6 \le n} f_{\theta_{i+m(\log R)^6}}  .  \]
Then $f=\underset{i}{\sum}H^i$ where there are $R^{1/2}/R_{N-1}^{1/2}=(\log R)^6$ terms in the sum. 
Additionally, 
\[ \sum_{\theta}\|f_\theta\|_{2}^2=\sum_i\sum_\theta\|(H^i)_\theta\|_{2}^2. \]
Let $i_0$ denote an index satisfying
\[ \sum_\theta\|f_\theta\|_{2}^2\ge (\log R)^6\sum_\theta\|(H^{i_0})_\theta\|_{2}^2. \]
Define $F$ by 
\[ f=F+H^{i_0}.   \]
Both $F$ and $H^{i_0}$ have the spacing property at scale $R_{N-1}$. 
\begin{itemize}
    \item If $\|H^{i_0}\|_{L^6(Q_R)}\le\frac{1}{\log R}\|f\|_{L^6(Q_R)}$, then define $\tilde{f}=F$. Note that 
    \[ (1-\frac{1}{\log R})\|f\|_{L^6(Q_R)}\le\|\tilde{f}\|_{L^6(Q_R)}.\]

    \item If $\|H^{i_0}\|_{L^6(Q_R)}>\frac{1}{\log R}\|f\|_{L^6(Q_R)}$, then define $\tilde{f}=H^{i_0}$. From the way we selected $i_0$, we have
    \[ \frac{\|f\|_{L^6(Q_R)}^6}{\underset{\theta}{\sum}\|f_\theta\|_{L^2(\mathbb{R}^2)}^2}\le \frac{(\log R)^6\|\tilde{f}\|_{L^6(Q_R)}^6}{(\log R)^6\underset{\theta}{\sum}\|(\tilde{f})_\theta\|_{L^2(\mathbb{R}^2)}^2} . \]
\end{itemize}
No matter the case above, since for each $\theta$, $f_\theta=F_\theta$ or $H_\theta^{i_0}$,
\[ \sum_\theta\|(\tilde{f})_\theta\|_{2}^2\le \sum_\theta\|f_\theta \|_{2}^2.    \]

\end{proof}

\vspace*{.2in}

\begin{lemma} \label{well-spaced} For each $f\in\mc{S}$ with Fourier transform supported in $\mc{N}_{R^{-1}}(\mb{P}^1)$, there exists a well-spaced collection $\tilde{\Theta}$ such that 
\[ \frac{\|f\|_{L^6(Q_R)}^6}{\underset{\theta}{\sum}\|f_\theta\|_{L^2(\mathbb{R}^2)}^2}   \lesssim   \,\, \frac{\|\tilde{f}\|_{L^6(Q_R)}^6}{\underset{\theta}{\sum}\|(\tilde{f})_\theta\|_{L^2(\mathbb{R}^2)}^2}     \]
where $\tilde{f}=\underset{\theta\in\tilde{\Theta}}{\sum}f_\theta$. 
\end{lemma}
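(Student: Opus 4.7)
The plan is to iterate Lemma~\ref{scaleRk} one scale at a time through $k = 1, 2, \ldots, N-1$. The observation that makes the iteration work is that passing to a subcollection $\Theta'\subseteq \Theta$ preserves any spacing property previously established for $\Theta$: the covering rectangles $\tau_j$ that witness scale-$R_j$ spacing for $\Theta$ still cover $\cup_{\theta\in\Theta'}\theta$, and after discarding any that intersect no $\theta\in\Theta'$ they remain pairwise $\ge \tfrac{1}{2} R_{j+1}^{-1/2}$-separated. Hence, once we can achieve scale-$R_k$ spacing at the cost of a factor $(1-1/\log R)^6$ in the $L^6/\ell^2$ ratio, the well-spaced conclusion follows by $N-1$ iterations.

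First I would prove an analog of Lemma~\ref{scaleRk} at an arbitrary scale $R_k$, $k\in\{1,\ldots,N-1\}$: for every $f\in\mc{S}$ with Fourier support in $\mc{N}_{R^{-1}}(\mb{P}^1)$ there exists $\Theta_k\subseteq\{\theta\}$ with the spacing property at scale $R_k$ such that
\[
  (1-\tfrac{1}{\log R})^6 \frac{\|f\|_{L^6(Q_R)}^6}{\sum_\theta\|f_\theta\|_{L^2(\R^2)}^2} \le \frac{\|\tilde f\|_{L^6(Q_R)}^6}{\sum_\theta\|(\tilde f)_\theta\|_{L^2(\R^2)}^2}, \qquad \tilde f=\sum_{\theta\in\Theta_k}f_\theta.
\]
The proof transplants the argument of Lemma~\ref{scaleRk} from the $\theta$-scale to the $\tau_{k+1}$-scale. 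Label the $\tau_{k+1}$ rectangles $\tau_{k+1,1},\tau_{k+1,2},\ldots$ from left to right, and for $i=1,\ldots,(\log R)^6$ let $H^i$ be the sum of $f_\theta$ over all $\theta$ contained in some $\tau_{k+1,\,i+m(\log R)^6}$, $m\in\Z$. Then $f=\sum_i H^i$ and $\sum_\theta\|f_\theta\|_2^2=\sum_i\sum_\theta\|(H^i)_\theta\|_2^2$, so pigeonholing yields an $i_0$ with $\sum_\theta\|(H^{i_0})_\theta\|_2^2\le (\log R)^{-6}\sum_\theta\|f_\theta\|_2^2$. Take $\tilde f := f-H^{i_0}$ if $\|H^{i_0}\|_{L^6(Q_R)}\le \|f\|_{L^6(Q_R)}/\log R$, and $\tilde f := H^{i_0}$ otherwise; the ratio inequality follows word-for-word as in Lemma~\ref{scaleRk}. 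The scale-$R_k$ spacing is a geometric consequence of deleting every $(\log R)^6$-th $\tau_{k+1}$: the surviving $\tau_{k+1}$'s break into contiguous blocks of exactly $(\log R)^6-1$ consecutive $\tau_{k+1}$'s, each block of length $<R_k^{-1/2}$ so fitting inside a single $\tau_k$, with consecutive blocks separated by the full width $R_{k+1}^{-1/2}>\tfrac{1}{2}R_{k+1}^{-1/2}$ of one deleted $\tau_{k+1}$.

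To iterate, set $\Theta^{(0)}=\{\theta\}$ and $f^{(0)}=f$, and for $k=1,\ldots,N-1$ apply the single-scale result to $f^{(k-1)}$ to produce $\Theta^{(k)}\subseteq\Theta^{(k-1)}$ with scale-$R_k$ spacing and the corresponding ratio inequality; put $f^{(k)}=\sum_{\theta\in\Theta^{(k)}}f_\theta$. By the observation in the first paragraph, $\Theta^{(k)}$ inherits spacing at every previously handled scale $R_1,\ldots,R_{k-1}$, so $\tilde\Theta:=\Theta^{(N-1)}$ is well-spaced. Compounding the per-step losses gives a total factor of $(1-1/\log R)^{6(N-1)}$, which since $N=\log R/(12\log\log R)$ equals $\exp(-O(1/\log\log R))\gtrsim 1$ and is therefore absorbed into the $\lesssim$ of the conclusion. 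The only non-trivial step in the whole argument is the geometric verification of scale-$R_k$ spacing after the deletion; everything else is an essentially mechanical transplant of Lemma~\ref{scaleRk}.
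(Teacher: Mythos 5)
Your proposal is correct and takes essentially the same approach as the paper: iterate the comb-deletion-plus-pigeonholing device of Lemma~\ref{scaleRk} across all $N-1$ scales, using the observation that passing to a Fourier-support subcollection preserves previously established spacing, and absorbing the compounded $(1-1/\log R)^{6(N-1)}\gtrsim 1$ loss. The only (superficial) difference is the direction of iteration: the paper starts at the finest scale $R_{N-1}$ and works down to $R_1$, combing the $R_{N-k}^{-1/2}$-arcs that survive from the previous step, whereas you start at $R_1$ and work up to $R_{N-1}$, combing the full set of $\tau_{k+1}$'s at each stage; both produce the same conclusion by the same mechanism.
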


\begin{proof} Define an iterative procedure. Let $\tilde{F}_N=f$. Apply Proposition \ref{scaleRk} to $\tilde{F}_N$ to obtain $\tilde{F}_{N-1}$. 
\[\]
\noindent {\bf{Obtaining $\tilde{F}_{N-k-1}$ (where $k\ge 2$) from $\tilde{F}_{N-k}$:}}
We have $\tilde{F}_{N-k}$ from the previous step. $\tilde{F}_{N-k}$ has the spacing property at scales $R_{N-1},\ldots,R_{N-k}$. Furthermore,  
\begin{equation}\label{eq: lem45}
(1-\frac{1}{\log R})^{6k}\frac{\|f\|_{L^6(Q_R)}^6}{\underset{\theta}{\sum}\|f_\theta\|_{2}^2}\le \frac{\|\tilde{F}_{N-k}\|_{L^6(Q_R)}^6}{\underset{\theta}{\sum}\|(\tilde{F}_{N-k})_\theta\|_2^2}.  
\end{equation}
The $R_{N-k}^{-1/2}$-arcs made up of  unions of the $\theta$ are defined by the previous steps. Label them from left to right by $\tau_{1},\ldots,\tau_{n_k}$. For $i=0,\ldots,(\log R)^6$, define $H_{N-k-1}^i$ by 
\[ \sum_{1\le i+ m(\log R)^6\le n_k} (\tilde{F}_{N-k})_{\tau_{i+m(\log R)^6}}  . \]
Note that $\tilde{F}_{N-k}=\underset{i}{\sum}H_{N-k-1}^i$ and for each $\theta$, $(\tilde{F}_{N-k})_\theta$ equals $(H_{N-k-1}^i)_\theta$ for exactly one $i$. Let $i_k$ denote the index satisfying 
\[ \sum_\theta\|(\tilde{F}_{N-k})_\theta\|_{L^6(Q_R)}^2\ge (\log R)^6\sum_\theta\|(H_{N-k-1}^{i_k})_\theta\|_{L^6(Q_R)}^2. \]

Define $F_{N-k-1}$ by 
\[ \tilde{F}_{N-k}=F_{N-k-1}+H_{N-k-1}^{i_k}.   \]
Note that since both $F_{N-k-1}$ and $H_{N-k-1}^{i_k}$ have smaller Fourier support than $\tilde{F}_{N-k}$, they inherit the spacing property at scales $R_{N-1},\ldots,R_{N-k}$. By construction, $F_{N-k-1}$ and $H_{N-k-1}^{i_k}$ also have the spacing property at scale $R_{N-k-1}$. 
\[\]
Define $\tilde{F}_{N-k-1}$ as follows: 
\begin{itemize}
    \item If $\|H_{N-k-1}^{i_k}\|_{L^6(Q_R)}\le\frac{1}{\log R}\|\tilde{F}_{N-k}\|_{L^6(Q_R)}$, then define $\tilde{F}_{N-k-1}=F_{N-k-1}$ and note that \eqref{eq: lem45} holds with $k$ replaced by $k+1$.

    \item If $\|H_{N-k-1}^{i_k}\|_{L^6(Q_r)}>\frac{1}{\log R}\|\tilde{F}_{N-k}\|_{L^6(Q_R)}$, then define $\tilde{F}_{N-k-1}=H_{N-k-1}^{i_k}$ and note that because of the way we defined $H_{N-k-1}^{i_k}$,
    \[ \frac{\|\tilde{F}_{N-k}\|_{L^6(Q_R)}^6}{\underset{\theta}{\sum}\|(\tilde{F}_{N-k})_\theta\|_{2}^2} \le   \frac{\|\tilde{F}_{N-k-1}\|_{L^6(Q_R)}^6}{\underset{\theta}{\sum}\|(\tilde{F}_{N-k-1})_\theta\|_{2}^2}.     \]
\end{itemize}

Iterate this procedure for $N-1$-steps, until we obtain $\tilde{F}_1$ which is well-spaced along with  inequality \eqref{eq: lem45} holds for $k=N-1$.
Since $N\sim \frac{\log R}{\log\log R}$, $(1-\frac{1}{\log R})^{-N}\le e^{\frac{C}{\log\log R}}\lesssim 1 $

\end{proof}

\subsection{Proof of Theorem \ref{Main}}

We prove Theorem \ref{Main} using Proposition \ref{mainstronger}  and Lemma \ref{well-spaced}. 

\begin{proof}[Proof of Theorem \ref{Main}]
By Lemma \ref{well-spaced}, 
\[ \frac{\|f\|_{L^6(Q_R)}^6}{\underset{\theta}{\sum}\|f_\theta\|_{L^2(\mathbb{R}^2)}^2}   \lesssim   \,\, \frac{\|\tilde{f}\|_{L^6(Q_R)}^6}{\underset{\theta}{\sum}\|(\tilde{f})_\theta\|_{L^2(\mathbb{R}^2)}^2}     \]
where $\tilde{f}=\underset{\theta\in\tilde{\Theta}}{\sum}f_\theta$ for a well-spaced $\tilde{\Theta}$. Then by Proposition \ref{mainstronger}, 
\[  \frac{\|\tilde{f}\|_{L^6(Q_R)}^6}{\underset{\theta}{\sum}\|(\tilde{f})_\theta\|_{L^2(\mathbb{R}^2)}^2}\lesssim (\log R)^c(\sum_{\theta\in\tilde{\Theta}}\|f_\theta\|_{L^\infty(\mathbb{R}^2)}^2)^2.   \]
Since $\tilde{\Theta}\subset\Theta$, we are done. 
\end{proof}

\section{\label{pigeonholing} Showing $\text{Dec}_6(R)\lesssim (\log R)^{c}$ from Theorem \ref{Main}}

\subsection{Wave packet decomposition and pigeonholing}

We will consider the following form of the decoupling inequality:
\begin{equation} \label{dec} \|f\|_{L^6(Q_R)}\le \text{Dec}_6(R)\big(\sum_\theta\|f_\theta\|_{L^6(\R^2)}^2\big)^{1/2}.\end{equation}
The constant $\text{Dec}_6(R)$ associated to this inequality is comparable to the constant where the $L^6$-norms are both taken over $\R^2$ and to the constant obtained from the $L^6$ norm in the upper bound being some weight function $\w_{Q_R}$.

Our goal is to begin with $f\in\mc{S}$ with Fourier transform supported in $\mc{N}_{R^{-1}}(\mb{P}^1)$ and show it suffices to prove the decoupling inequality for a version of $f$ which has relatively constant amplitudes and number of wave packets in each direction. In the following definitions, $\sim$ means within a factor of $2$.

Write 
\begin{align}\label{sum} f=\sum_\theta\sum_{T\in\T_\theta}\s_Tf_\theta \end{align}
where for each $\theta$, $\{\s_T\}_{T\in\T_\theta}$ is a Gaussian partition of unity (meaning adds up to 1) adapted to $(\log R)^9(R\times R^{1/2})-$tubes $T$. Note that this implies that $|\widehat{\s_T}|\lesssim R^{-1000}$ off of $(\log R)^{-3}(\theta-c_\theta)$, where $c_\theta$ is the center of $\theta$.

\begin{prop}[Wave packet decomposition] \label{wpd} There exist subsets $\tilde{\Theta}\subset\Theta$ and $\tilde{\T}_\theta\subset\T_\theta$ as well as a constant $C\in[R^{-10^3},1]$ with the following properties:
\begin{align} \|\sum_\theta f_\theta\|_{L^6(Q_R)}\lesssim (\log R)^2 \|\sum_{\theta\in\tilde{\Theta}}&\sum_{T\in\tilde{\T}_\theta}\s_Tf_\theta\|_{L^6(Q_R)} +R^{-9.5}\big(\sum_\theta\|f_\theta\|_{L^6(\R^2)}^2\big)^{1/2}, \\
\#\tilde{\T}_\theta\sim \#\tilde{\T}_{\theta'}&\quad \text{ for all }\quad \theta,\theta'\in\tilde{\Theta}, \\
\text{and} \quad \|\s_Tf_\theta\|_{L^\infty(\R^2)}\sim C\max_{\theta'\in \tilde{\Theta}}&\max_{T'\in\tilde{\T}_{\theta'}}\|\s_{T'}f_{\theta'}\|_{L^\infty(\R^2)} \quad\text{for all}\quad \theta\in\tilde{\Theta}\,\,\text{and}\,\,  T\in\tilde{\T}_\theta. 
\end{align}
\end{prop}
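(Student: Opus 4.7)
The plan is a two-stage dyadic pigeonholing on the decomposition \eqref{sum}. Write $M := \max_{\theta \in \Theta, T \in \T_\theta} \|\s_T f_\theta\|_{L^\infty(\R^2)}$. First I would discard wave packets of negligible amplitude. Let $\mc{B} := \{(\theta, T) : \|\s_T f_\theta\|_\infty \le R^{-10^3} M\}$. Only polynomially many (in $R$) wave packets contribute non-negligibly to $\|f\|_{L^6(Q_R)}$: there are $\sim R^{1/2}$ caps and, since $\s_T$ is Gaussian-concentrated on $T$ with rapid decay off $100\sqrt{\log R}\, T$, only $O(R^{O(1)})$ tubes per cap which intersect a mild enlargement of $Q_R$. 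By the triangle inequality,
\[ \Big\| \sum_{(\theta,T) \in \mc{B}} \s_T f_\theta \Big\|_{L^6(Q_R)} \lesssim R^{O(1)} \cdot R^{-10^3} M \cdot |Q_R|^{1/6}. \]
Using Lemma~\ref{lem: lc} to pass from $M \lesssim \max_\theta \|f_\theta\|_{L^\infty(\R^2)}$ to $\max_\theta \|f_\theta\|_{L^6(\R^2)}$, this is comfortably bounded by $R^{-9.5}(\sum_\theta \|f_\theta\|_{L^6(\R^2)}^2)^{1/2}$, supplying the error term in the statement.

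Next I would dyadically sort the surviving pairs by amplitude: for $0 \le j \le 10^3 \log_2 R$, set $\mc{A}_j := \{(\theta, T) \notin \mc{B} : \|\s_T f_\theta\|_\infty \in (2^{-j-1} M, 2^{-j} M]\}$. There are $\lesssim \log R$ nonempty levels, so the triangle inequality in $L^6(Q_R)$ together with a pigeonhole over $j$ produces some $j_0$ whose level dominates the whole ``large'' sum up to a factor $\log R$; set $C := 2^{-j_0} \in [R^{-10^3}, 1]$. I would then pigeonhole a second time over $\theta$: for each cap appearing in $\mc{A}_{j_0}$ let $N_\theta := \#\{T : (\theta,T) \in \mc{A}_{j_0}\}$, an integer in $[1, R^{O(1)}]$, partition such caps into $\lesssim \log R$ dyadic classes according to $N_\theta$, and pick the dominant class as $\tilde{\Theta}$ (losing another factor $\log R$). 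Define $\tilde{\T}_\theta := \{T : (\theta, T) \in \mc{A}_{j_0}\}$ for $\theta \in \tilde{\Theta}$.

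By construction, the values $\#\tilde{\T}_\theta$ are mutually within a factor of $2$ across $\theta \in \tilde{\Theta}$, and every amplitude $\|\s_T f_\theta\|_\infty$ (with $\theta \in \tilde{\Theta}$, $T \in \tilde{\T}_\theta$) lies in the same dyadic window $(CM/2, CM]$. In particular each such amplitude is comparable to $\max_{\theta' \in \tilde{\Theta}, T' \in \tilde{\T}_{\theta'}} \|\s_{T'} f_{\theta'}\|_\infty$ with the claimed ratio (and this ratio clearly lies in $[R^{-10^3},1]$). The two pigeonholings together cost $(\log R)^2$, matching the stated inequality. The only step requiring care is the initial discard: one must confirm that the polynomial count of wave packets touching $Q_R$, combined with Lemma~\ref{lem: lc}, suffices to fit the small-amplitude contribution inside the allotted $R^{-9.5}$ budget. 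Because the exponent gap between the cutoff $R^{-10^3}$ and the target $R^{-9.5}$ is enormous, a very crude count works, so this is not a serious obstacle.
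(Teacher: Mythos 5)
Your approach matches the paper's in outline (discard negligible wave packets; dyadically pigeonhole first on amplitude, then on the per-cap count), but as written there is a gap in the step that is supposed to guarantee $N_\theta \in [1, R^{O(1)}]$.

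The issue is that your ``bad'' set $\mc{B}$ is defined purely by the criterion $\|\s_T f_\theta\|_{L^\infty(\R^2)} \le R^{-10^3} M$, which does \emph{not} discard tubes $T$ that are far from $Q_R$ but on which $f_\theta$ happens to be large. Such a tube has negligible contribution to $\|f\|_{L^6(Q_R)}$ (because $\s_T$ is tiny on $Q_R$), but it does \emph{not} lie in $\mc{B}$, so it survives into one of the amplitude classes $\mc{A}_j$ and is counted in $N_\theta$. Since $f_\theta$ is merely Schwartz, the number of such surviving far tubes is finite but has no $R$-polynomial bound a priori; hence the number of dyadic classes for $N_\theta$ is not $O(\log R)$, and the second pigeonhole does not deliver the $(\log R)^2$ loss you claim. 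Your comment about ``only $O(R^{O(1)})$ tubes per cap which intersect a mild enlargement of $Q_R$'' is the right intuition, but it needs to be turned into an actual reduction: one must first restrict to the close tubes (those meeting, say, $R^{10}Q_R$) and handle the far tubes by the Gaussian decay of $\s_T$, \emph{before} running the amplitude pigeonhole. That is exactly what the paper does: it splits $\T_\theta$ into $\T_\theta^c$ and $\T_\theta^f$, shows $\|\sum_\theta \sum_{T\in\T_\theta^f}\s_T f_\theta\|_{L^6(Q_R)}$ is absorbed into the $R^{-9.5}$ error, notes $|\T_\theta^c|\le R^{22}$, and only then pigeonholes on $\lambda$ and on $j = |\T_{\theta,\lambda}^c|\in[1,R^{22}]$. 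With this small reordering your argument is complete and identical to the paper's; without it, the claim $N_\theta \le R^{O(1)}$ is unjustified.
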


\begin{proof} 
Split the sum (\ref{sum}) into 
\begin{equation}\label{step1} f=\sum_\theta\sum_{T\in\T_\theta^c}\s_Tf_\theta+\sum_\theta\sum_{T\in\T_\theta^f}\s_Tf_\theta\end{equation} 
where the close set is
\[ \T_\theta^c:=\{T\in\T_\theta:T\cap R^{10} Q_R\not=\emptyset\}\]
and the far set is 
\[ \T_\theta^f:=\{T\in\T_\theta:T\cap R^{10}Q_R=\emptyset\} . \]
Using Cauchy-Schwarz, 
\begin{align*}
\|\sum_\theta\sum_{T\in\T_\theta^f}\s_Tf_\theta\|_{L^6(Q_R)}&\le R^{1/2}\|(\sum_\theta|\sum_{T\in\T_\theta^f}\s_Tf_\theta|^2)^{1/2}\|_{L^6(Q_R)}     \\
&\le R^{1/2}\big(\sum_\theta\|\sum_{T\in\T_\theta^f}\s_Tf_\theta\|_{L^6(Q_R)}^2\big)^{1/2} \\
&\le R^{1/2}\max_\theta\|\sum_{T\in\T_\theta^f}\s_T\|_{L^\infty(Q_R)}\big(\sum_\theta\|f_\theta\|_{L^6(\R^2)}^2\big)^{1/2} \\
&\le \frac{1}{R^{9.5}}\big(\sum_\theta\|f_\theta\|_{L^6(\R^2)}^2\big)^{1/2} 
\end{align*} 
where we could have bounded the $L^\infty$ norm by $C_NR^{-10N}$ for any $N\in\N$. This takes care of the \emph{far} portion of $f$ (i.e. the second term on the right hand side of (\ref{step1})).

The close set  has cardinality $|\T_{\theta}^c|\leq R^{22}$.  Let $M=\max_\theta\max_{T\in\T_\theta^c}\|\s_Tf_\theta\|_{L^\infty(\R^2)}.$ By Lemma~\ref{lem: lc}, 
\begin{equation}\label{eq: M}
M \leq \max_{\theta}\|f_{\theta}\|_{L^{\infty}} \lesssim (\sum_{\theta}\|f_{\theta}\|_{L^6(\mathbb{R}^2)}^2)^{1/2}.
\end{equation}

Split the remaining term as 
\begin{equation} \label{step2}
    \sum_\theta\sum_{T\in\T_\theta^c}\s_Tf_\theta=\sum_\theta\sum_{R^{-10^3}\le \lambda\le 1}\sum_{T\in\T_{\theta,\lambda}^c}\s_Tf_\theta+\sum_\theta\sum_{T\in\T_{\theta,s}^c}\s_Tf_\theta
\end{equation}
where $\lambda$ is a dyadic number in the range $[R^{-10^3},1]$,  
\[ \T_{\theta,\lambda}^c:=\{T\in\T_\theta^c:\|\s_Tf_\theta\|_{L^\infty(\R^2)}\sim \lambda M \},\]
and
\[ \T_{\theta,s}^c:= \{T\in\T_\theta^c:\|\s_Tf_\theta\|_{L^\infty(\R^2)}\le \frac{1}{2}R^{-10^3}M \} . \]

Handle the \emph{small} term from (\ref{step2}) by
\begin{align*}   \|\sum_\theta\sum_{T\in\T_{\theta,s}^c}\s_Tf_\theta\|_{L^6(Q_R)} &\le R^{1/2}\big(\sum_\theta\|\sum_{T\in\T_{\theta,s}^c}\s_Tf_\theta\|_{L^6(Q_R)}^2\big)^{1/2}   \\ &\leq R^{-10}M \leq R^{-10} (\sum_{\theta}\|f_{\theta}\|_{L^6(\mathbb{R}^2)}^2)^{1/2}. 
\end{align*} 

Next decompose the remaining term from (\ref{step2}) as
\begin{equation} \label{step3}
\sum_{R^{-10^3}\le\lambda\le 1}\sum_\theta\sum_{T\in\T_{\theta,\lambda}^c}\s_Tf_\theta=\sum_{R^{-10^3}\le \lambda\le 1}\sum_{1\le j\le R^{22}}\sum_{\theta\in\Theta_j(\lambda)}\sum_{T\in\T_{\theta,\lambda}^c}\s_Tf_\theta 
\end{equation}
where $j$ is a dyadic number in the range $[1,R^{22}]$ and $\Theta_j(\lambda)=\{\theta:|\T_{\theta,\lambda}^c|\sim j\}$. 

Because $j$ and $\lambda$ are dyadic numbers, there is a choice of $(\lambda,j)$ so that
\[ \|\sum_{R^{-10^3}\le \lambda\le 1}\sum_{1\le j\le R^{22}}\sum_{\theta\in\Theta_j(\lambda)}\sum_{T\in\T_{\theta,\lambda}^c}\s_Tf_\theta\|_{L^6(Q_R)}\lesssim (\log R)^2\|\sum_{\theta\in\Theta_j(\lambda)}\sum_{T\in\T_{\theta,\lambda}^c}\s_Tf_\theta\|_{L^6(Q_R)}. \]

Take $\tilde{\Theta}=\Theta_j(\lambda)$ and for each $\theta\in \tilde{\Theta}$, take $\tilde{\T}_{\theta}=\mathbb{T}_{\theta,\lambda}^c$.

\end{proof}

\subsection{Proof of Theorem~\ref{decoupling}}

\begin{proof} By Proposition \ref{wpd}, we have
\begin{align*}
    \|\sum_\theta f_\theta\|_{L^6(Q_R)}\lesssim (\log R)^3\|\sum_{\theta\in\tilde{\Theta}}\sum_{T\in\tilde{\T}_\theta}\s_Tf_\theta\|_{L^6(Q_R)} +R^{-3}\big(\sum_\theta\|f_\theta\|_{L^6(\R^2)}^2\big)^{1/2}
\end{align*}
where $\#\tilde{\T}_\theta\sim \#\tilde{\T}_{\theta'}$ for all $\theta,\theta'\in\tilde{\Theta}$ and
\begin{equation}\label{eq: A}
\|\s_Tf_\theta\|_{L^\infty(\R^2)} \sim A:= \max_{\theta'\in \tilde{\Theta}}\max_{T'\in \tilde{\T}_{\theta}} \|\s_{T'}f_{\theta'}\|_{L^\infty(\R^2)}. 
\end{equation}

Since the Fourier transform of $\sum_{T\in\tilde{\T}_\theta}\s_Tf_\theta$ is essentially supported in  $ (1 + (\log R)^{-3}) \theta$, there exists a function $f_\theta'$ with Fourier transform supported in $(1 + (\log R)^{-3})\theta$ such that
\[ \sum_{T\in\tilde{\T}_\theta}\s_Tf_\theta(x)=f_\theta'(x)+O(R^{-998})A.\]
Thus 
\begin{align}
    \|\sum_{\theta\in\tilde{\Theta}}\sum_{T\in\tilde{T}_\theta}\s_Tf_\theta\|_{L^6(Q_R)} &\le \|\sum_{\theta\in\tilde{\Theta}}f_\theta'\|_{L^6(Q_R)}+R^{-997}A.
\end{align}
The functions $f_\theta'$ have Fourier support in $(1 + (\log R)^{-3})\theta$. We may split $\tilde{\Theta}$ into $\sim 1 $ sets $\tilde{\Theta}_i$ where for distinct $\theta,\theta'\in \tilde{\Theta}_i$,
\[ (2\theta)\cap (2\theta')=\emptyset. \]
Then for some $i$, 
\[ \|\sum_{\theta\in\tilde{\Theta}}f_\theta'\|_{L^6(Q_R)}^6\lesssim \|\sum_{\theta\in\tilde{\Theta}_i}f_\theta'\|_{L^6(Q_R)}^6 ,\]
and it follows from Theorem \ref{Main} that 
\begin{equation}\label{f'} \|\sum_{\theta\in\tilde{\Theta}}f_\theta'\|_{L^6(Q_R)}^6\lesssim (\log R)^{c}(\sum_{\theta\in\tilde{\Theta}_i}\|f_\theta'\|_{L^\infty(\mathbb{R}^2)}^2)^2\sum_{\theta\in\tilde{\Theta}_i}\|f_\theta'\|_{L^2(\mathbb{R}^2)}^2. \end{equation}
Note that
\[ (\sum_{\theta\in\tilde{\Theta}_i}\|f_\theta'\|_{L^\infty(\mathbb{R}^2)}^2)^{1/2}\le (\sum_{\theta\in\tilde{\Theta}}\|\sum_{T\in\tilde{\T}_\theta}\s_Tf_\theta\|_{L^\infty(\mathbb{R}^2)}^2)^{1/2}+R^{-500}A \]
and
\[\big(\sum_{\theta\in\tilde{\Theta}_i}\|f_\theta'\|_{L^2(\mathbb{R}^2)}^2\big)^{1/2}\lesssim \big(\sum_{\theta\in\tilde{\Theta}}\|\sum_{T\in\tilde{\T}_\theta}\s_Tf_\theta\|_{L^2(\mathbb{R}^2)}^2\big)^{1/2}+R^{-500}A. \]

Combining these observations with (\ref{f'}) gives
\begin{align} \label{here'}
\|\sum_{\theta\in\tilde{\Theta}}f_\theta'\|_{L^6(Q_R)}^6  \lesssim (\log R)^{c} (\sum_{\theta\in\tilde{\Theta}}\|\sum_{T\in\tilde{\T}_\theta}\s_Tf_\theta\|_{L^\infty(\mathbb{R}^2)}^2)^2\sum_{\theta\in\tilde{\Theta}}\|\sum_{T\in\tilde{\T}_\theta}\s_Tf_\theta\|_{L^2(\mathbb{R}^2)}^2 + R^{-2000} A^6.
\end{align} 
The second term is is bounded by 
\begin{equation}\label{eq: tail}
R^{-2000} A^6 \leq R^{-2000} (\sum_{\theta}\|f_{\theta}\|_{L^6(\mathbb{R}^2)}^2)^3
\end{equation}
 using Lemma~\ref{lem: lc} and  the fact that $0\leq \psi_T\leq 1$. 
It remains to analyze the first term in the upper bound in (\ref{here'}). 
For each $\theta\in\tilde{\Theta}$, we have
\[ |\sum_{T\in\tilde{\T}_\theta}\s_Tf_\theta(x)|\lesssim |\sum_{\substack{T\in\tilde{\T}_\theta\\ x\in\log RT}}\s_Tf_\theta(x)|+R^{-1000}A\le (\log R)^2 A+R^{-1000}A.  \]
This leads to the following upper bound for the first term on the right hand side of (\ref{here'}):
\begin{equation}\label{eq: 1}
(\sum_{\theta\in\tilde{\Theta}}\|\sum_{T\in\tilde{\T}_\theta}\s_Tf_\theta\|_{L^\infty(\mathbb{R}^2)}^2)^2 \leq (\log R)^8(\#\tilde{\Theta}A^2)^2 + R^{-1000}A^4.
\end{equation}
For each $\theta\in\tilde{\Theta}$, we also have
\begin{align*}
	\int|\sum_{T\in\tilde{\T}_\theta}\s_Tf_\theta|^2 dx & \lesssim (\log R)^2\sum_{T\in\tilde{\T}_\theta}\int_{\log RT}|\s_Tf_\theta|^2 dx +R^{-1998}A^2    \\
	& \lesssim  \log R)^4 \# \tilde{\Theta} \#\tilde{\T}_{\theta}A^2 |T| + R^{-1998}A^2. 
	\end{align*}
Combining this with \eqref{eq: 1} leads to the upper bound 
\begin{equation}\label{eq: upper}
\|\sum_{\theta\in\tilde{\Theta}}f_\theta'\|_{L^6(Q_R)}^6  \lesssim (\log R)^{c + 12} \#\tilde{\Theta}^3  \#\tilde{\T}_{\theta} A^6 |T| + R^{-1000} A^6.
\end{equation}

%
Finally, note that for each $\theta\in\tilde{\Theta}$, 
\[ \#\tilde{\T}_\theta|T| A^6\lesssim (\log R)^2 \sum_{T\in\tilde{\T}_\theta}\int|\s_Tf_\theta|^6\w_{T}+R^{-2000}A^6\]
where we used the locally constant property. Finally, since the $\ell^6$ norm is bounded by the $\ell^1$ norm and  $ 0 \leq \s_T  \leq 1$,
\[ \sum_{T\in\tilde{\T}_\theta}\int|\s_Tf_\theta|^6\le  \int (\sum_{T\in\tilde{\T}_\theta}  |\s_Tf_\theta|)^6 \leq  \|f_\theta\|_{L^6(\R^2)}^6. \]
In summary, we finish the proof by combining \eqref{eq: upper} with \eqref{eq: A} and 
\[ \#\tilde{\Theta}^3\#\tilde{\T}_\theta|T|A^6 \lesssim (\log R)^2\big(\sum_{\theta\in\tilde{\Theta}}    \|f_\theta\|_{L^6(\R^2)}^2\big)^3.\]

\end{proof}

\vspace*{.2in}

\section*{Appendix}

In the appendix, we show that a slight modification of the proof gives Theorem~\ref{Main} and Theorem~\ref{decoupling} for a set of curves:
$$\mathcal{K}:= \{(\xi_1, h(\xi_1)): |\xi_1|\leq 1\}, $$ where $h$ are  $C^2$ functions  satisfying $h(0)=h'(0)=0$, and  $1/2\leq h''(\xi_1)\leq 2 \text{~for~} |\xi|\leq 1$.  In particular, one can cut the unit circle into $O(1)$ arcs, such that each arc is part of a curve in $\mathcal{K}$ after translation and rotation. The truncated parabola $\mathbb{P}^1$ is also in $\mathcal{K}$.  

 To prove Theorem~\ref{Main}  and Theorem~\ref{decoupling} for the curves in $\mathcal{K}$,  replace the affine map~\eqref{affine map}  in Lemma~\ref{rescaling} by 
\begin{equation}\label{affine map2}
\ell(\xi_1,\xi_2)=\big( \nu (\xi_1-c),\nu^2 (\xi_2 -h(c) -h'(c) (\xi_1-c))\big)
\end{equation}
with $\nu=R_k^{1/2}$. 
Since the curve $\{ \big(\nu(\xi_1 -c), \nu^2 (h(\xi_1) -h(c) -h'(c)(\xi_1-c))\big):   |\xi_1-c|\leq \nu^{-1}\} $ is also in $\mathcal{K}$, the proof of Lemma~\ref{rescaling} remains unchanged provided that Proposition~\ref{mainp} holds for all the curves in $\mathcal{K}$ for a smaller $R$. 

Then it suffices to  check \eqref{cordoba} for the curves in $\mathcal{K}$.
Let $(\xi, h(\xi))$ be the center of $\tau_j$.  Assume that
$$
 \xi, \xi'' \in \tau_k, \,\,\,  \xi',\xi'''\in \tau_k',$$
 and 
$$\xi -\xi'' = \xi'''-\xi' + O(R_j^{-1/2}),$$ then $h(\xi)-h(\xi'')=h'(\xi_1)(\xi-\xi'')$ for some $\xi_1$ between $\xi$ and $\xi''$, and $h(\xi''')-h(\xi')=h'(\xi_2)(\xi'''-\xi')$ for some $\xi_2$ between $\xi'''$ and $\xi'$. The first derivative  $h'(\xi_1) +(\xi_2-\xi_1)/2\leq h'(\xi_2)$ since $1/2\leq h''(\xi)\leq 1$. By $(\xi_2-\xi_1 )\geq \text{dist}(\tau_k, \tau_k')$, we have $$h(\xi)-h(\xi'')- (h(\xi''') -h(\xi'))\gtrsim  \text{dist}(\tau_k, \tau_k') R_{j}^{-1/2}$$  if  $\tau_j, \tau_j', \tau_j'', \tau_j'''$ have pairwise distance $O(R_{j}^{-1/2})$.  So \eqref{cordoba} is verified.

Decoupling for the circle with explicit decoupling constant $(\log R)^c$ has an application on a problem about sums of two squares.  Such problem arises in the study of Laplace eigenfunctions for the standard two dimensional torus.

Let $\Lambda_m$ be the set of Gaussian integers $\lambda=x+\sqrt{-1}y$, $x, y\in \mathbb{Z}$ with norm $\lambda \overline{\lambda}=m$.

\vspace{5pt}
\textbf{Problem.} Give a nontrivial upper bound for the number of solutions
$$\lambda_1+\lambda_2+\lambda_3=\lambda_4+\lambda_5+\lambda_6; ~~\lambda_j\in \Lambda_m.$$

\begin{corollary}\label{lattice on circle}
	For any $\Lambda\subset \Lambda_m$, if $N= |\Lambda|> (\log m)^{(c+6)/\epsilon}$ for some $\epsilon>0$ and the constant $c$ as in Theorem~\ref{Main}, then 
	$$N_m(\Lambda):= \#\{\lambda_1+\lambda_2+\lambda_3=\lambda_4+\lambda_5+\lambda_6, \lambda_j\in \Lambda\} \lesssim N^{3+\epsilon}. $$

\end{corollary}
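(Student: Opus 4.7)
The plan is to express $N_m(\Lambda)$ as an $L^6$ integral and apply the decoupling inequality for the unit circle (i.e., Theorem~\ref{Main} extended to arcs in the class $\mathcal{K}$ per the appendix).

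First I would fix $R = Cm$ for a large absolute constant $C$, pick a Schwartz bump $\phi$ (for instance $\phi = \psi^2$ with $\psi$ a standard Schwartz bump) satisfying $\phi \gtrsim 1$ on $Q_R$, $|\phi| \lesssim 1$ on $\mathbb{R}^2$, and $\widehat{\phi}$ supported in $B(0, R^{-1}/1000)$, and then define
\[ f(x) := \phi(x) \sum_{\lambda \in \Lambda} e^{2\pi i x \cdot \lambda / \sqrt{m}}. \]
Since $|\lambda| = \sqrt{m}$ for each $\lambda \in \Lambda$, the rescaled points $\lambda/\sqrt{m}$ lie exactly on the unit circle, so $\widehat{f}$ is supported in the $R^{-1}$-neighborhood of that circle.

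For the lower bound, note that $g(x) := \sum_{\lambda \in \Lambda} e^{2\pi i x \cdot \lambda/\sqrt{m}}$ is $\sqrt{m}\,\mathbb{Z}^2$-periodic (because $\lambda \in \mathbb{Z}^2$), and the standard orthogonality computation on one period gives $\int_{[0,\sqrt{m}]^2} |g|^6\,dx = m \cdot N_m(\Lambda)$. Since $Q_R$ contains $\sim R^2/m$ such periods and $|\phi|^6 \gtrsim 1$ on $Q_R$,
\[ \|f\|_{L^6(Q_R)}^6 \;\gtrsim\; R^2\, N_m(\Lambda) \;\sim\; m^2\, N_m(\Lambda). \]

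For the matching upper bound, I would partition the unit circle into $O(1)$ arcs that, up to an orientation-preserving Euclidean motion, are graphs of functions in $\mathcal{K}$, introduce a smooth Fourier partition of unity on those arcs, and apply the $\mathcal{K}$-version of Theorem~\ref{Main} to each resulting piece. The essential geometric input is that a cap $\theta$ has tangential side $R^{-1/2} = (Cm)^{-1/2}$, strictly smaller than the minimum separation $1/\sqrt{m}$ of distinct rescaled lattice points (since distinct elements of $\mathbb{Z}^2$ differ by Euclidean distance at least $1$). Thus each cap contains at most one point $\lambda/\sqrt{m}$; only $N$ caps contribute, and each contributing $f_\theta$ is essentially $\phi(x) e^{2\pi i x \cdot \lambda/\sqrt{m}}$, so $\|f_\theta\|_{L^\infty(\mathbb{R}^2)} \sim 1$ and $\|f_\theta\|_{L^2(\mathbb{R}^2)}^2 \sim \|\phi\|_{L^2}^2 \sim R^2 \sim m^2$. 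Theorem~\ref{Main} then yields
\[ \|f\|_{L^6(Q_R)}^6 \;\lesssim\; (\log R)^c\, N^2 \cdot N m^2 \;=\; (\log m)^c\, N^3\, m^2. \]
Combining with the lower bound gives $N_m(\Lambda) \lesssim (\log m)^c N^3$, and under the hypothesis $N > (\log m)^{(c+6)/\epsilon}$ we have $(\log m)^c < N^\epsilon$ with room to spare, so $N_m(\Lambda) \lesssim N^{3+\epsilon}$.

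The main thing to be careful about is the bookkeeping around the Fourier partition of unity that reduces the full unit circle to individual $\mathcal{K}$-arcs: one must verify that this partition does not enlarge the effective $R^{-1}$-neighborhood of the curve nor destroy the estimate $\|f_\theta\|_\infty \sim 1$. Both are routine with standard smooth cutoffs. Otherwise the argument is a direct periodisation/extension-operator setup followed by a single application of decoupling; no induction or further structural input beyond Theorem~\ref{Main} (and its $\mathcal{K}$-extension) is needed.
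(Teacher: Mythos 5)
Your proof is correct and follows the same strategy as the paper: multiply the exponential sum $g$ by a weight with small Fourier support, use the $\sqrt m\,\mathbb Z^2$-periodicity of $g$ (so that $\int_{[0,\sqrt m]^2}|g|^6 = m\,N_m(\Lambda)$) to lower-bound $\|f\|_{L^6(Q_R)}^6$ by $m^2 N_m(\Lambda)$, note that each cap $\theta$ catches at most one rescaled lattice point $\lambda/\sqrt m$, and close with the circle ($\mathcal K$-)version of Theorem~\ref{Main}. The only difference is cosmetic: you take $R=Cm$ and a band-limited bump $\phi$ with $\widehat\phi$ supported in $B(0,R^{-1}/1000)$ and $\phi\gtrsim 1$ on $Q_R$, whereas the paper takes $R=m$ and a Gaussian-partition weight $\psi$ essentially supported on a cube of side $m(\log m)^3$ (which is precisely what produces their extra $(\log m)^6$ factor), so your bookkeeping is marginally cleaner but the argument is the same.
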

\begin{proof}
	Consider the function 
	\[
	g(z)= \sum_{\lambda\in \Lambda} e^{2\pi i \frac{\lambda}{\sqrt{m}} \cdot z}
	\]
for $z\in \mathbb{R}^2$.
Let $Q_0=[0,m(\log m)]^2$ and $\{Q\}$ be a tiling of $\mathbb{R}^2$ with translates of $Q_0$. Let $\{\varphi_{Q}\}$ be the Gaussian partition of unity defined as in Definition~\ref{Gaussian}. Define the weight function $$\psi =\sum_{Q:  dist (Q, Q_0)  \leq m (\log m)^2} \varphi_Q.$$ Then $|\psi -1 |\leq m^{-1000}$ on $Q_0$ and $|\psi|\leq m^{-1000}$ outside of $(\log m)^2 Q_0$ and rapidly decays away from it. Moreover, the Fourier transform $\widehat{\psi}$ is essentially supported on $B(0, m^{-1} )$
		
	We apply Theorem~\ref{decoupling}  (for the circle) to  the function $f(z)=g(z) \psi$ with  $R= m$ and $Q_R=[0,m]^2$.  Then $f_{\theta}=e^{2\pi i \lambda\cdot z/\sqrt{m}}$ for the (unique) $\lambda/\sqrt{m}\in \theta$.   Note that $g(z)$ is periodic: $g(z+\sqrt{m} v)=g(z)$ for any $v\in \mathbb{Z}^2$.  Since $|f-g|\leq m^{-1000}$ on $Q_R$,  we have $$N_m(\Lambda) \lesssim (\log m)^{c+6} |\Lambda|^3 $$ where the $(\log m)^6$ comes from $\psi$ a weight function essentially supported in $[0, (\log m)^3m]^2$. 
\end{proof}

This problem was studied by Bombieri and Bourgain in \cite{BB15} using various methods. In particular, they obtained the bound $O(|\Lambda_m|^{3+\epsilon})$ assuming the Riemann hypothesis and the Birch and Swinnerton-Dyer conjecture for the $L$--functions of elliptic curves over $\mathbb{Q}$ and for a random $m$ with $|\Lambda_m|\sim 2^{\omega(m)}$, $\omega(m)\sim \frac{\log m}{A\log \log m}$ for some constant $A$. 
Based on the  Bourgain-Demeter decoupling,  Zane Li obtained in \cite{ZLi} the result of Bombieri and Bourgain unconditionally for all $m$ with  $|\Lambda_m|>\exp ((\log m)^{1-o(1)})$.
Corollary~\ref{lattice on circle} proves the result for a larger range of $|\Lambda|$:  $|\Lambda|> (\log m)^{c/o(1)}$.

In \cite{BD14},  it was conjectured that for any $\Lambda\subset \Lambda_m$, and any $\epsilon>0$, there exists $C_{\epsilon}$ independent of $m$, such that
$$\#\{\lambda_1+\lambda_2+\lambda_3=\lambda_4+\lambda_5+\lambda_6, \lambda_j\in \Lambda\} \leq C_{\epsilon} |\Lambda|^{3+\epsilon}.$$ 
Corollary~\ref{lattice on circle} confirms for $|\Lambda|> (\log m)^{(c+6)/\epsilon}$.

\vskip.25in

\end{document}